\documentclass[a4paper,10pt]{article}

\usepackage[T1]{fontenc}
\usepackage{enumerate}
\usepackage{amsmath,amssymb,amsthm}
\usepackage{graphicx}
\usepackage{bbm}
\usepackage{todonotes} 
\usepackage{cite}      
\usepackage[affil-it]{authblk} 

\usepackage[hidelinks,bookmarksnumbered]{hyperref}
\hypersetup{pdfstartview=FitH}
%
%
%

\usepackage{tikz}
\usetikzlibrary{patterns}
\usetikzlibrary{calc}
\usetikzlibrary{intersections}
\usetikzlibrary{through}

  \newcounter{constant}
  \newcommand{\newconstant}[1]{\refstepcounter{constant}\label{#1}}
  \newcommand{\uc}[1]{c_{\textnormal{\tiny \ref{#1}}}}
  \setcounter{constant}{-1}

\def\arraypar#1{\parbox[c]{\textwidth - 2cm}{\centering #1}}

\usepackage{environ}
\NewEnviron{display}{
  \begin{equation}\begin{array}{c}
    \arraypar{\BODY}
  \end{array}\end{equation}
}

\newcommand{\ind}[1]{\ensuremath{\mathbbm{1}_{ \{#1 \}}}}
\newcommand{\inds}[1]{\ensuremath{\mathbbm{1}_{ #1 }}}
\newcommand{\indAlto}[1]{\ensuremath{\mathbbm{1} \{ #1 \}}}

\newcommand{\distr}{\ensuremath{\stackrel{\scriptstyle d}{=}}}
\newcommand{\supp}{\ensuremath{\text{\upshape supp }}}

\newcommand{\dist}{\ensuremath{\text{\upshape dist}}}
\newcommand{\Poi}{\text{\upshape Poi}}
\newcommand{\Var}{\text{\upshape Var}}
\newcommand{\diam}{\text{\upshape diam }}



\newcommand{\cB}{\ensuremath{\mathcal{B}}}
\newcommand{\cC}{\ensuremath{\mathcal{C}}}

\newcommand{\cE}{\ensuremath{\mathcal{E}}}

\newcommand{\cR}{\ensuremath{\mathcal{R}}}

\newcommand{\cV}{\ensuremath{\mathcal{V}}}


\newcommand{\LL}{\ensuremath{\mathbb{L}}}
\newcommand{\NN}{\ensuremath{\mathbb{N}}}

\newcommand{\QQ}{\ensuremath{\mathbb{Q}}}
\newcommand{\RR}{\ensuremath{\mathbb{R}}}
\newcommand{\ZZ}{\ensuremath{\mathbb{Z}}}

\setlength{\marginparwidth}{60pt}
\setlength{\hoffset}{0pt}
\setlength{\oddsidemargin}{5pt}
\setlength{\evensidemargin}{5pt}
\setlength{\textwidth}{435pt}


\theoremstyle{plain}
\newtheorem{prop}{Proposition}[section]
\newtheorem{conj}{Conjecture}[section]
\newtheorem{teo}{Theorem}[section]
\newtheorem{lema}{Lemma}[section]
\newtheorem{coro}{Corollary}[section]
\newtheorem*{teo:vacant_crossing_of_boxes_alpha_2}{Theorem \ref{teo:vacant_crossing_of_boxes_alpha_2}}
\newtheorem*{teo:alpha_phase_transition}{Theorem \ref{teo:alpha_phase_transition}}
\newtheorem*{teo:covered_set_alpha_phase_transition}{Theorem \ref{teo:covered_set_alpha_phase_transition}}

\theoremstyle{definition}
\newtheorem{defi}{Definition}[section]

\theoremstyle{remark}
\newtheorem{remark}{Remark}[section]

%




\title{Ellipses Percolation}
\author{Augusto Teixeira\thanks{Email: \texttt{augusto@impa.br}; IMPA, Estrada Dona Castorina 110, 22460-320, Rio de Janeiro, RJ, Brazil.} \quad \qquad Daniel Ungaretti\thanks{Email: \texttt{danielungaretti@gmail.com}; IMPA, Estrada Dona Castorina 110, 22460-320, Rio de Janeiro, RJ, Brazil.}}

\begin{document}

\maketitle

\begin{abstract}
  We define a continuum percolation model that provides a collection of random ellipses on the plane and study the behavior of the covered set and the vacant set, the one obtained by removing all ellipses. Our model generalizes a construction that appears implicitly in the Poisson cylinder model of Tykesson and Windisch. The ellipses model has a parameter $\alpha > 0$ associated with the tail decay of the major axis distribution; we only consider distributions $\rho$ satisfying $\rho[r, \infty) \asymp r^{-\alpha}$. We prove that this model presents a double phase transition in $\alpha$. For $\alpha \in (0,1]$ the plane is completely covered by the ellipses, almost surely. For $\alpha \in (1,2)$ the vacant set is not empty but does not percolate for any positive density of ellipses, while the covered set always percolates. For $\alpha \in (2, \infty)$ the vacant set percolates for small densities of ellipses and the covered set percolates for large densities. Moreover, we prove for the critical parameter $\alpha = 2$ that there is a non-degenerate interval of density for which the probability of crossing boxes of a fixed proportion is bounded away from zero and one, a rather unusual phenomenon. In this interval neither the covered set nor the vacant set percolate, a behavior that is similar to critical independent percolation on $\ZZ^2$.

\vspace{0.5cm}\noindent {\it Math. Subject Classification:} 60K35, 82B43, 60G55.
\end{abstract}

\section{Introduction}

Bernoulli percolation was introduced by Broadbent and Hammersley \cite{broadbent1957percolation} in 1957 and is a simple model that exhibits phase transition. Since then, many interesting properties of the different phases have been well understood and there are classical books on the subject \cite{grimmett2010percolation, bollobas2006percolation}. However, some important problems remain open and the model continues to attract the attention of the probability community.

Percolation processes are natural candidates to model environments in which connectivity is assumed to be random and thus found many applications in different areas such as forest fires \cite{drossel1992self, brouwer2005percolation}, spread of infections \cite{newman2002spread} and polymerization \cite{aldous2000percolation}, among others.

Continuum percolation models \cite{meester1996continuum} are one of the possible variations. Some techniques from percolation on graphs work on this setting, although working in a continuous space generally adds some extra difficulties. Moreover, there are results on continuum percolation that have no graph analogues, which increases interest on this subject. For example, in \cite{hall1985continuum} it is proven that there are Boolean models with random radii such that the critical point for the existence of an infinite cluster and the critical point for the mean cluster size to be infinite are different.

One of the most established continuum percolation models is the Boolean model, in which we start with a Poisson point process in $\RR^d$ with intensity measure being a multiple of the Lebesgue measure and for each point we add a ball (possibly of random radius) centered on it, independently for each point. In this article we define a similar model that provides a collection of random ellipses in the plane. Our model is inspired by a paper of Tykesson and Windisch \cite{1010.5338}, in which they have defined the Poisson cylinder model on $\RR^d$. Many of their results follow by looking at the intersection of the collection of cylinders with a plane $\RR^{2} \times \{0\}^{d-2}$; by performing this intersection one obtains ellipses.

Let us define our model, a generalization of the random ellipses obtained on \cite{1010.5338}. We build a collection of ellipses that are centered on a Poisson point process on $\RR^2$ of intensity $u$ times Lebesgue measure, where $u > 0$. The ellipses have uniform direction and the size of their minor axis is always equal to one. Moreover, their major axis has distribution $\rho$ supported on $[1, \infty)$ and satisfying
\begin{equation*}
\uc{const:R_decay}^{-1} r^{-\alpha} \le \rho[r,\infty) \le \uc{const:R_decay} r^{-\alpha}, \ \text{for every} \ r \geq 1
\end{equation*}
for some positive constant $\uc{const:R_decay}$. We refer to this process as the $(u, \rho)$-ellipses model. Details of the construction are given in the next section.

\begin{figure}[h]
    \centering
    \includegraphics[width=0.6\textwidth]{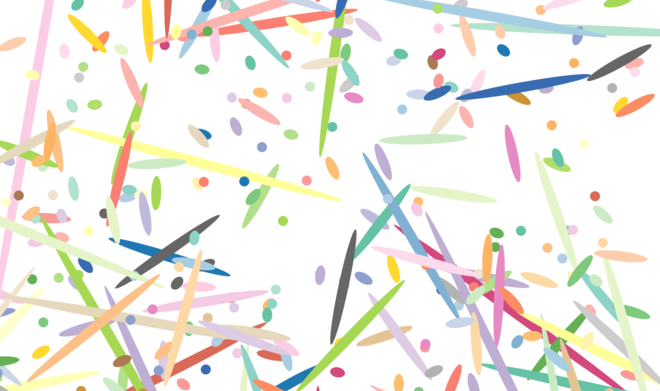}
    \caption{Small simulation of ellipses model with $\rho[r,\infty) = r^{-2}$.}
    \label{fig:leandro_simulation}
\end{figure}

One reason to study this model is that it presents infinite range dependencies, which prevents the use of some common tools like Peierls argument. Also, although the ellipses model dominates Boolean model with radius 1, it cannot be dominated by any Boolean model with fixed radius (see Remark \ref{remark:not_dominated_by_Boolean} after Proposition \ref{prop:prob_small_ball_covered_by_ellipse_step2}). On the other hand, any $(u, \rho)$-ellipses model is obviously dominated by a Boolean model with radius distribution $\rho$ and this can be used to derive some of the results in Theorems \ref{teo:alpha_phase_transition} and \ref{teo:covered_set_alpha_phase_transition}. To overcome the dependencies of the model, our study of ellipses percolation uses similar techniques as other models with long range dependencies such as the Poisson cylinder model of \cite{1010.5338} and the random interlacements model of Sznitman \cite{sznitman2010vacant}. 

We are mainly interested in the phase transition properties that the model presents. We prove that the vacant set $\cV$, the set not covered by any ellipse, presents a double phase transition in $\alpha$.

\begin{teo}
\label{teo:alpha_phase_transition}
Consider $(u, \rho)$-ellipses percolation model, where $\rho$ has tail decay $\alpha$ and associated constant $\uc{const:R_decay}$. Then, with probability one:
\begin{enumerate}[1.]
 \item If $\alpha \in (0,1]$ we have $\cV = \varnothing$ for every $u > 0$.\vspace{-2mm}
 \item If $\alpha \in (1,2)$ we have $\cV \neq \varnothing$, but for any $u > 0$ there is no percolation.\vspace{-2mm}
 \item If $\alpha \in (2,\infty)$ there exists a critical value $\bar{u}_c(\rho) \in (0, \infty)$ such that if $u < \bar{u}_c$ then $\cV$ percolates and if $u > \bar{u}_c$ then $\cV$ does not percolate.
\end{enumerate}
\end{teo}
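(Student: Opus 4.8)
I would treat the three regimes by essentially separate arguments, all resting on the fact that the ellipses form a Poisson process on $\RR^2\times[1,\infty)\times[0,\pi)$ with intensity $u\,dx\otimes\rho(dr)\otimes\pi^{-1}d\theta$, so that for any measurable family of ``target'' configurations the number of ellipses realising it is Poisson with mean equal to the intensity of that family. Two further tools are used repeatedly: splitting the ellipses according to the dyadic class of their major axis produces independent sub-processes (Poisson thinning), and an ellipse of major axis $r$ contains the disk of radius $\tfrac12$ about its centre and is contained in the disk of radius $\tfrac r2$ about its centre. \emph{Regime $\alpha\in(0,1]$.} Fix $x_0$ and call it \emph{robustly covered at scale $k$} if some ellipse with major axis in a (sufficiently coarse) dyadic block around $2^k$ has its axis within perpendicular distance $\tfrac18$ of $x_0$ and its centre within $2^{k-2}$ of $x_0$ along the axis; such an ellipse contains $B(x_0,\tfrac18)$. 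The number of such ellipses is Poisson with mean of order $u\,2^{k}\rho[2^{k},2^{k+1})\asymp u\,2^{k(1-\alpha)}$, which stays bounded away from $0$ for large $k$ precisely when $\alpha\le 1$; since the scale-$k$ events are independent, $x_0$ is a.s.\ robustly covered at infinitely many scales, hence covered. Applying this to a finite set of points that $\tfrac18$-covers $[0,1]^2$ gives $[0,1]^2\subseteq\cC$ a.s., and a countable union over integer translates yields $\cV=\varnothing$ a.s.

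\textbf{Regime $\alpha\in(1,2)$.} That $\cV\neq\varnothing$ a.s.: the number of ellipses covering the origin is Poisson with mean $\Lambda\asymp u\bigl(1+\int_1^\infty r^{-\alpha}\,dr\bigr)$, which is finite exactly because $\alpha>1$, so $\PP[0\notin\cC]=e^{-\Lambda}>0$, and the translation-invariant $0$--$1$ law (the Poisson process being ergodic) upgrades this to probability one. That $\cV$ does not percolate for any $u>0$: I would exhibit covered ``circuits'' around the origin at arbitrarily large scales. In the annulus at radius $\rho=2^n$, four \emph{large tangential needles} --- ellipses with major axis of order $\rho$, centre at distance of order $\rho$ from the origin, and direction roughly tangential --- overlap pairwise into a connected covered loop separating $B(0,c\rho)$ from its complement; the four prescribed $(x,r,\theta)$-windows are disjoint, so their occupations are independent Poisson events, each of probability $1-e^{-\mu}$ with $\mu\asymp u\,\rho^{2-\alpha}\to\infty$ (this is exactly where $\alpha<2$ enters). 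Hence $\PP[\text{no covered loop at radius }2^n]\le 4e^{-c'u\,2^{n(2-\alpha)}}$, which is summable, so by Borel--Cantelli covered loops around the origin exist at radii tending to infinity, a.s. Any unbounded connected subset of $\cV$ is connected, attains every sufficiently large distance from the origin, and would therefore have to cross one of these covered loops --- impossible. So $\cV$ does not percolate.

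\textbf{Regime $\alpha\in(2,\infty)$.} Coupling the models monotonically in $u$ by superposing Poisson points makes $\{\cV\text{ percolates}\}$ decreasing in $u$, and ergodicity makes it a $0$--$1$ event; hence there is a threshold $\bar u_c\in[0,\infty]$, and it suffices to prove $0<\bar u_c<\infty$. For $\bar u_c<\infty$: $\cV$ is contained in the vacant set of the Boolean model of radius-$\tfrac12$ disks at intensity $u$, whose occupied set percolates robustly for large $u$ and then produces occupied circuits around the origin at all large scales (same Borel--Cantelli scheme as above), so the corresponding vacant set --- hence $\cV$ --- does not percolate. For $\bar u_c>0$: $\cV$ \emph{contains} the vacant set of the Boolean model with radii distributed as $\rho/2$ at intensity $u$; since $\alpha>2$ gives $\EE[R^2]<\infty$ for such $R$, that Boolean model has a percolating vacant set at small intensity, so $\cV$ percolates for small $u$.

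\textbf{Main obstacle.} I expect the one genuinely hard step to be the last assertion: vacant percolation at small intensity for a planar Boolean model whose radius has finite second moment but unbounded support (equivalently, for the ellipses model directly). This is where the long-range dependence of the model bites --- one cannot invoke finite-range stochastic domination --- and it is established by a multiscale renormalisation in which, at scale $L$, the disruptive effect of obstacles of diameter larger than $L$ is absorbed by a union bound of order $u\,L^{2-\alpha}$, a quantity tending to $0$ as $L\to\infty$ precisely when $\alpha>2$; the recursion then closes by gluing vacant crossings of the sub-boxes while detouring around the few oversized obstacles, in the spirit of the renormalisation schemes used for the Poisson cylinder and random interlacements models. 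Whether this input is quoted from the Boolean-percolation literature or reproved in the ellipse setting, the remaining parts of the proof are comparatively soft: Borel--Cantelli arguments, Poisson mean computations, monotone coupling and the $0$--$1$ law.
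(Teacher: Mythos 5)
Your outline follows essentially the same route as the paper. For $\alpha\le 1$ you use a multiscale Poisson/Borel--Cantelli covering argument, whereas the paper shows directly that $\int_{\RR^2}P[0\in E_z]\,dz=\infty$ from the estimate $P[0\in E_z]\asymp|z|^{-(\alpha+1)}$, so that the Poisson count of covering ellipses is a.s.\ positive; your version also works, provided (as you anticipate with ``sufficiently coarse'') the blocks for the major axis are wide enough that their $\rho$-mass is comparable to the tail, since the two-sided bound $\rho[r,\infty)\asymp r^{-\alpha}$ does not control the mass of a single dyadic interval. For $1<\alpha<2$ your four tangential needles in disjoint $(x,r,\theta)$-windows are the same mechanism as the paper's three-ellipse circuits at scales $3^n$ (adapted from Tykesson--Windisch); one must choose the windows so that \emph{any} ellipse in one window meets \emph{any} ellipse in the adjacent one, which the paper enforces by requiring each ellipse to hit two prescribed segments, but this is routine. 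The monotone coupling and ergodicity points are correctly handled.

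The one genuine gap is in the step you yourself flag as the main obstacle. You assert that for $\alpha>2$ the dominating Boolean model ``has a percolating vacant set at small intensity'' and treat this as quotable input, but the paper explicitly notes that Gou\'er\'e's results concern only the occupied set, and the renormalisation you sketch only yields that the annulus-crossing probabilities $q_k=P[\partial B(10^k)\leftrightarrow\partial B(8\cdot 10^k)\text{ in }\cE]$ decay fast (the paper's Lemma \ref{lema:bound_on_qk_in_multiscale_renorm}, via $q_{k+1}\le Cq_k^2+uC\,10^{k(2-\alpha)}$). Decaying crossing probabilities along one sequence of scales do not by themselves produce an unbounded vacant component: one still has to exclude covered circuits around the origin at \emph{every} location and scale simultaneously, or glue vacant crossings into an infinite path. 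The paper does this in Lemma \ref{lema:phase_transition_in_u_for_alpha_greater_2} by tiling the positive half-axis with balls $B^i_j$ of radius $10^j$ arranged so that any covered circuit surrounding the origin must realise a translate of the crossing event in some $B^i_j$, and then applying Borel--Cantelli to $\sum_j 32\,q_j<\infty$. Your proposal needs this (or an equivalent gluing construction) to be complete; the rest is sound.
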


We also prove a version of Theorem \ref{teo:alpha_phase_transition} for the covered set $\cE$, providing an overall picture of an ellipses model.

\begin{teo}
\label{teo:covered_set_alpha_phase_transition}
Consider $(u, \rho)$-ellipses percolation model, where $\rho$ has tail decay $\alpha$ and associated constant $\uc{const:R_decay}$. Then, with probability one:
\begin{enumerate}[1.]
 \item If $\alpha \in (0,1]$ we have $\cE = \RR^2$ for every $u > 0$.\vspace{-2mm}
 \item If $\alpha \in (1,2)$ we have that $\cE$ percolates for any $u > 0$.\vspace{-2mm}
 \item If $\alpha \in (2,\infty)$ there exists a critical value $u_c(\rho) \in (0, \infty)$ such that if $u < u_c$ then $\cE$ does not percolate and if $u > u_c$ then $\cE$ percolates.
\end{enumerate}
\end{teo}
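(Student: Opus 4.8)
Items 1 and 3 are soft and I would dispatch them quickly; the real work is in item 2. Since $\cE = \RR^2 \setminus \cV$ by definition, item 1 is literally item 1 of Theorem~\ref{teo:alpha_phase_transition} and needs no further argument. For item 3, I would combine the two Boolean comparisons mentioned in the introduction with ergodicity. The event $\{\cE\text{ percolates}\}$ is invariant under the translations of $\RR^2$, so its probability is $0$ or $1$; and $\cE$ is an increasing functional of the Poisson point process, so a thinning coupling makes $u \mapsto \PP(\cE\text{ percolates})$ non-decreasing. It is then enough to exhibit a small $u$ with no percolation and a large $u$ with percolation, and to set $u_c(\rho)$ to be the infimum of the (up-closed) set of $u$ for which percolation occurs. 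For large $u$: the $(u,\rho)$-ellipses model stochastically dominates a Boolean model of fixed radius at intensity $u$, whose occupied set percolates once $u$ is large, so $\cE$ percolates. For small $u$: the model is dominated by a Boolean model with radius law $\rho$ (up to a factor $2$), and since $\alpha>2$ gives $\EE[R^2]<\infty$ for $R\sim\rho$, the classical subcritical-phase criterion for planar Boolean percolation with finite second radius moment shows that this occupied set, hence $\cE$, does not percolate for $u$ small. With the $0$--$1$ law and monotonicity this yields $u_c(\rho)\in(0,\infty)$ with the stated dichotomy.

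\medskip
\noindent\textbf{Item 2: a crossing estimate.} Fix $\alpha\in(1,2)$ and $u>0$. The heart of the matter is the bound
\begin{equation*}
\PP\bigl(\cE\text{ has a left--right crossing of }[0,3L]\times[0,L]\bigr)\;\ge\;1-\exp\bigl(-c\,u\,L^{2-\alpha}\bigr)
\end{equation*}
for all large $L$ and a constant $c=c(\alpha,\uc{const:R_decay})>0$. The mechanism is a single exceptionally long ellipse: an ellipse whose major axis has length in $[6L,NL]$, with $N=N(\alpha,\uc{const:R_decay})$ a large fixed constant, whose centre lies in the middle sub-box $[L,2L]\times[\tfrac{2L}{5},\tfrac{3L}{5}]$, and whose direction is within angle $\tfrac1{100}$ of the horizontal, contains inside the rectangle a nearly horizontal crossing segment --- one checks that its horizontal extent covers $[0,3L]$ while its vertical drift over that range stays comfortably inside $(0,L)$. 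The number of ellipses with these three (independent) properties is a Poisson variable whose mean is at least a constant times $u\cdot L^2\cdot\rho[6L,NL)$; choosing $N$ large enough that $\rho[6L,NL)\ge \tfrac12\uc{const:R_decay}^{-1}(6L)^{-\alpha}$, which is possible by the tail assumption on $\rho$, bounds the mean below by $c\,u\,L^{2-\alpha}$, and this diverges since $\alpha<2$. As the probability of seeing no such ellipse is the exponential of minus this mean, the estimate follows; by rotational symmetry the analogue holds for a crossing, in either coordinate direction, of any rectangle of a fixed aspect ratio and scale $L$.

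\medskip
\noindent\textbf{Item 2: percolation, and the main obstacle.} Write $B_r$ for the centred disc of radius $r$. From the crossing estimate, the probability that $\cE$ contains a radial crossing of the annulus $B_{2^{k+2}}\setminus B_{2^k}$ --- obtained by crossing one radial rectangle of aspect ratio $6$ the long way --- and the probability that $\cE$ contains a circuit around the origin inside $B_{2^{k+2}}\setminus B_{2^{k+1}}$ --- obtained by crossing, the long way, each of a bounded number of overlapping tangential rectangles tiling that annulus --- both exceed $1-C\exp(-c\,2^{k(2-\alpha)})$. Since $2-\alpha>0$ these complements are summable in $k$, so Borel--Cantelli gives that almost surely, for all $k$ past a random $K$, both such a radial crossing and such a circuit occur in $\cE$. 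A standard planar topological gluing --- a radial crossing of an annulus between radii $\rho_1<\rho_2$ must meet any $\cE$-circuit around the origin lying at radii in $(\rho_1,\rho_2)$, having points on both sides of it --- then links all of these pieces, for $k\ge K$, into one unbounded connected subset of $\cE$; hence $\cE$ percolates almost surely, and by the $0$--$1$ law with probability one. The main obstacle is exactly this item, and within it the crossing estimate: the key point is that for $\alpha<2$ a single unusually elongated ellipse already crosses a box with probability tending to $1$, and fast enough ($1-e^{-cL^{2-\alpha}}$) that --- unlike in the delicate $\alpha=2$ regime of Theorem~\ref{teo:vacant_crossing_of_boxes_alpha_2} --- no decoupling across scales and no renormalisation are required: a plain Borel--Cantelli suffices, so the infinite-range dependence of the model never has to be confronted for this statement. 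The residual effort is the routine but fiddly geometry of a long tilted ellipse crossing a rectangle and the classical annulus-gluing bookkeeping.
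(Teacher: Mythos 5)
Your proposal is correct and follows essentially the same route as the paper: item 1 reduces to the total-covering result, item 3 is the two-sided Boolean comparison (fixed radius~$1$ from below for large $u$, radius law $\rho$ with $\EE[R^2]<\infty$ and Gou\'er\'e's subcritical criterion from above for small $u$), and item 2 rests on the same single-long-ellipse crossing bound $1-\exp(-c\,u\,L^{2-\alpha})$ followed by Borel--Cantelli over dyadic scales. The only difference is cosmetic: you glue via annular circuits and radial crossings, whereas the paper glues alternating horizontal and vertical crossings of nested rectangles $[0,2^{n+1}]\times[0,2^n]$; both are standard planar topology.
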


The case $\alpha = 2$ is described separately in Theorem \ref{teo:vacant_crossing_of_boxes_alpha_2} because this case deserves special attention. It presents an unusual phase transition in $u$.

\begin{teo}
\label{teo:vacant_crossing_of_boxes_alpha_2}
Let $\rho$ be a distribution with $\alpha = 2$. Then, there exists $\bar{u} = \bar{u}(\uc{const:R_decay}) > 0$ such that for any fixed $k > 0$, $u \in (0,\bar{u})$ and $l > 0$ 
\begin{equation}
\label{eq:vacant_crossing_bounded_away_intro}
\delta \le P_{u, \rho}[\text{exists vacant horizontal crossing of box of height $l$ and width $kl$}]\le 1 - \delta,
\end{equation}
where $\delta = \delta(\uc{const:R_decay}, u, k) > 0$. Moreover, for $u \in (0, \bar{u})$ we have:
\begin{equation}
\label{eq:cV_and_cE_not_percolate_intro}
P_{u, \rho}[\text{neither} \ \cV \ \text{nor} \ \cE \ \text{percolate}] = 1.
\end{equation}
\end{teo}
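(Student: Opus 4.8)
\noindent\textbf{Proof plan for Theorem \ref{teo:vacant_crossing_of_boxes_alpha_2}.}

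The key structural fact is that for $\alpha=2$ the ellipses model is self‑similar up to the fixed width of the ellipses: the expected number of ellipses whose major axis has order $L$ and which meet a given region of diameter of order $L$ is of order $u$, \emph{independently of $L$}, and it vanishes as $u\to0$. This borderline behaviour is exactly what keeps the crossing probabilities away from $0$ and from $1$. Write $R_{k,l}=[0,kl]\times[0,l]$. For the right‑hand inequality in \eqref{eq:vacant_crossing_bounded_away_intro}, observe that no vacant horizontal crossing of $R_{k,l}$ can exist once the covered set $\cE$ contains a top‑to‑bottom crossing of $R_{k,l}$, and such a crossing is produced by a single ellipse: take its major axis in an interval $[c_1(k)\,l,\,c_2(k,\uc{const:R_decay})\,l]$ of $\rho$‑mass of order $l^{-2}$ (using $\alpha=2$ and $c_2/c_1$ larger than $\uc{const:R_decay}$, so this mass is genuinely positive), its direction in an angular window of the vertical of width of order $\min(1,k)$, and its centre in the central part of $R_{k,l}$, all tuned so that the ellipse meets both horizontal sides of $R_{k,l}$ while its intersection with $R_{k,l}$ stays clear of the two vertical sides. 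The expected number of such ellipses is $\asymp u\min(k,k^2)$, a constant not depending on $l$; hence at least one appears with probability $p_0=1-e^{-c(u,k,\uc{const:R_decay})}>0$, uniformly in $l$, and the bound follows with $\delta\le p_0$.

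For the left‑hand inequality — the heart of the matter — I would first reduce to squares. By the rotation invariance of the model, $P_{u,\rho}[\cV\text{ crosses an }l\times l\text{ square left–right}]=P_{u,\rho}[\cV\text{ crosses it top–bottom}]=:\sigma(l)$, and gluing $O(k)$ overlapping squares crossed by $\cV$ in both directions and combining through FKG (vacant crossings are decreasing events) gives $P_{u,\rho}[\text{vacant horizontal crossing of }R_{k,l}]\ge\sigma(l)^{O(k)}$, so it suffices to bound $\sigma(l)$ below uniformly in $l$. For this I would run a multi‑scale renormalisation for the \emph{absence} of a covered top–bottom crossing of a square, in the spirit of the vacant‑set analyses in \cite{1010.5338,sznitman2010vacant}. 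With $L_n=L_0K^n$ for a large fixed $K$, declare a square $Q$ of side $L_n$ \emph{good} if no ellipse of major axis $\ge L_n$ meets $Q$ and the covered set built only from ellipses of major axis $<L_n$ does not cross $Q$ top‑to‑bottom. Writing $q_n$ for the probability that a scale‑$n$ square is bad, one obtains a recursion of the form $q_{n+1}\le c_3 u+c_4(K)\,q_n^{\,c_5K}$: the linear term bounds — \emph{uniformly in $n$, thanks to the scale invariance of $\alpha=2$} — the probability that an ellipse of major axis $\ge L_{n+1}$ meets the scale‑$(n+1)$ square (expected number $\asymp u$) together with the presence of any of the $\asymp uK^2$ ellipses of intermediate major axis in $[L_n,L_{n+1})$, while the super‑linear term comes from tiling the big square by scale‑$n$ squares and observing, via an RSW‑type argument, that a covered top–bottom crossing of the big square forces covered crossings of a stack of $\asymp K$ good‑violating scale‑$n$ squares. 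Taking $K$ large, then $L_0$ a suitable constant, then $u\le\bar u(\uc{const:R_decay})$ small enough (so that $q_0$ is small — which holds since on a bounded scale $\cE$ is sparse for small $u$ — and the recursion is contracting), one gets $q_n$ small for all $n$; a block argument then converts an abundance of good squares into a vacant crossing with probability bounded below, uniformly in $l$. This gives the left‑hand bound with $\delta=\delta(u,k,\uc{const:R_decay})>0$ and completes \eqref{eq:vacant_crossing_bounded_away_intro}.

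Thus $\delta\le P_{u,\rho}[\cV\text{ has a horizontal crossing of }R_{k,l}]\le1-\delta$ uniformly in $l$. If $\cV$ percolated then, by ergodicity it would a.s., and by the standard fact that percolation forces crossing probabilities in every direction and aspect ratio to tend to $1$ (a consequence of ergodicity and uniqueness of the infinite cluster, valid for this model), the middle quantity would tend to $1$, a contradiction. If instead $\cE$ percolated, then $P_{u,\rho}[\cE\text{ has a top–bottom crossing of }R_{k,l}]\to1$, which by planar duality forces $P_{u,\rho}[\cV\text{ has a horizontal crossing of }R_{k,l}]\to0$, again a contradiction. Hence a.s.\ neither $\cV$ nor $\cE$ percolates, which is \eqref{eq:cV_and_cE_not_percolate_intro}.

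The main obstacle is the lower bound of the second step. At $\alpha=2$ the obstruction at each scale — a box‑sized ellipse cutting the block — has probability only of order $u$, small but never negligible, so no scale can be discarded and the renormalisation must genuinely renew at every step; making this work forces one to marry the scale invariance with an RSW‑type gluing for the covered set, and producing and quantifying that gluing (together with the passage to arbitrary aspect ratios) is where the effort concentrates. A secondary point, hidden inside the ``standard fact'' invoked in the third step, is the mixing/decoupling estimate needed to pass from a.s.\ percolation to crossing probabilities tending to $1$ in this infinite‑range‑dependent setting.
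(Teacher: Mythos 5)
Your upper bound and the reduction to a fixed aspect ratio via FKG are correct and essentially identical to the paper's: a single well-placed ellipse of major axis $\asymp l$ makes the blocking covered crossing with probability bounded below uniformly in $l$ (this is exactly Proposition \ref{prop:crossing_box_with_one_ellipse} applied to $B_\infty(kl;1/k)$ after a rotation), and Lemma \ref{lema:vacant_crossing_of_boxes_alpha_2_k_2} is your gluing step.

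The lower bound, however, has a genuine gap at its core. Your recursion $q_{n+1}\le c_3 u+c_4(K)q_n^{c_5K}$ hinges on the claim that a covered top--bottom crossing of a scale-$(n+1)$ square forces $\asymp K$ \emph{bad} scale-$n$ squares, where bad means ``crossed top-to-bottom by small ellipses or met by a large ellipse,'' and that these $K$ bad events decouple. Neither half is available. A top--bottom crossing of a big square does \emph{not} force top--bottom crossings of sub-squares (the path can enter and leave a sub-square through the same or adjacent sides), which is precisely the obstruction RSW theory addresses in the i.i.d.\ setting; no RSW statement is known, or proved in the paper, for this long-range-dependent continuum model, and you explicitly defer this ``RSW-type gluing'' rather than supplying it. One can repair the coarse-graining by redefining bad as ``contains a covered cluster of diameter $\ge L_n/2$,'' but then the duality step converting goodness into a vacant crossing, the decoupling along the chain (bad squares share large ellipses), and the entropy--versus--$q_n^{c_5K}$ bookkeeping all have to be redone; none of this is routine. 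The paper sidesteps the issue entirely: after discarding ellipses centered outside $B(4l)$ and ellipses of major axis $\ge l/2$ (each event having probability bounded below uniformly in $l$ precisely because $\alpha=2$), it observes that for each dyadic scale $I_n$ the indicator field ``no ellipse with axis in $I_n$ meets the sub-box'' is $2$-dependent with marginals $\ge p(u)\to1$, dominates i.i.d.\ fields via Liggett--Schonmann--Stacey, and then invokes Theorem 1 of Chayes--Chayes--Durrett on fractal percolation, which performs all the multi-scale gluing as a black box. If you want to pursue your route, you must either prove the gluing lemma or replace it by such a domination argument.

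Your derivation of \eqref{eq:cV_and_cE_not_percolate_intro} also leans on an unavailable ingredient: the ``standard fact'' that a.s.\ percolation of $\cV$ (or $\cE$) forces crossing probabilities to tend to $1$ rests on uniqueness of the infinite cluster plus RSW-type gluing, and the paper explicitly states (Remark \ref{remark:properties_of_uc_and_bar_uc}) that uniqueness has not been verified for this model. The paper's actual argument is different and unconditional: $\cV$ does not percolate for \emph{any} $u>0$ when $\alpha\le2$ by the three-ellipse circuits of Lemma \ref{lema:alpha_le_2_is_subcritical}; and $\cE$ does not percolate because \eqref{eq:vacant_crossing_bounded_away} plus FKG yields vacant circuits in annuli $B(3L_n)\setminus B(L_n)$ with probability $\ge\delta$, and these nearly independent events occur infinitely often by combining the decorrelation Lemma \ref{lema:decay_of_correlations_for_annulus} with the Ortega--Wschebor generalization of Borel--Cantelli. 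You should adopt, or independently justify, arguments of this type for the second claim.
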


In other words, equation \eqref{eq:vacant_crossing_bounded_away_intro} states that the probability of having vacant crossing of boxes is bounded away from zero and one, independently of the scale of the box. This property holds for an interval $(0, \bar{u})$, not only a point. Together with equation \eqref{eq:cV_and_cE_not_percolate_intro}, Theorem \ref{teo:vacant_crossing_of_boxes_alpha_2} shows some similarity between critical bond percolation on $\ZZ^2$ and ellipses models with $\alpha = 2$ and $u$ on $(0, \bar{u})$.

The existence of a non-trivial interval $(0, \bar{u})$ in which the model features non-degenerate crossing probabilities is very interesting. This result has the same flavor of some other phenomena already in the literature. In \cite{schrenk2016critical} it is proven for a random fragmentation model with long-range correlations that there is an entire off-critical region in which power-law scaling is observed. Another example can be found on Coordinate Percolation on $\ZZ^3$ \cite{hilario2011coordinate}; in this model, each column that is parallel to one of the coordinate axis of $\ZZ^3$ is removed or not with a probability parameter depending only on its direction and columns are removed or not independently. This model has infinite range dependencies. In \cite{hilario2011coordinate} it is shown that the tail distribution for the radius of the open cluster containing the origin decays exponentially fast when at least two of the parameters are fixed to be high, but if two of the parameters are taken relatively small, then the truncated version for this tail decays, at most, polynomially fast. Quoting reference \cite{schrenk2016critical}, ``these findings suggest that long-range directional correlations lead to a rich spectrum of critical phenomena which need to be understood''.

Theorems \ref{teo:alpha_phase_transition} and \ref{teo:covered_set_alpha_phase_transition} have statements that are quite similar. Looking closely at them, one can identify that when $\alpha \in (0,2)$ the model is somewhat trivial and when $\alpha > 2$ there is a phase transition in $u$. Notice that the critical points $\bar{u}_c(\rho)$ and $u_c(\rho)$ mentioned on Theorems \ref{teo:alpha_phase_transition} and \ref{teo:covered_set_alpha_phase_transition} do not need to be equal. However, we believe it holds

\begin{conj}
\label{conj:equality_critical_points_alpha_greater_2}
When $\alpha > 2$ we have $\bar{u}(\rho) = u_c(\rho)$.
\end{conj}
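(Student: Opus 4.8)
The plan is to split the equality into the two inequalities $\bar{u}_c(\rho)\le u_c(\rho)$ and $u_c(\rho)\le\bar{u}_c(\rho)$. The first is the familiar ``no coexistence in the plane'': if $\cV$ percolates then, by uniqueness of the unbounded component (a Burton--Keane argument, available here since the ellipses model is translation invariant, ergodic and enjoys an insertion/deletion-tolerance property) together with planar duality, the covered set $\cE$ cannot percolate; see the analogous statement for the Boolean model in \cite{meester1996continuum}. Hence $u<\bar{u}_c$ gives $u\le u_c$, and letting $u\uparrow\bar{u}_c$ yields $\bar{u}_c\le u_c$. All the difficulty is in the reverse inequality, which amounts to excluding an intermediate interval $(\bar{u}_c,u_c)$ on which neither set percolates; equivalently, I need to show that $u>\bar{u}_c$ forces $\cE$ to percolate.

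The second ingredient is a duality relation for crossings of a square. For $n\ge1$ set $f_n(u)=P_{u,\rho}[\exists\ \text{vacant left--right crossing of }[0,n]^2]$ and $g_n(u)=P_{u,\rho}[\exists\ \text{covered top--bottom crossing of }[0,n]^2]$. A vacant and a covered crossing of the same box would meet at a point lying simultaneously in $\cV$ and in $\cE=\cV^{c}$, so these events are incompatible; and, up to a null set of boundary configurations, one of them always occurs, whence $f_n(u)+g_n(u)=1$. (This is the topological analogue of the self-duality of critical bond percolation on $\ZZ^{2}$; it requires no symmetry of the model, only its reflection invariance to single out the hard direction of the square.) In particular, if $f_n(u)\to0$ then $g_n(u)\to1$.

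The heart of the matter — and the reason the statement is only a conjecture — is sharpness of the vacant phase transition: for every $u>\bar{u}_c$,
\[
f_n(u)\ \xrightarrow[\,n\to\infty\,]{}\ 0,
\]
ideally with exponential speed. Non-percolation of $\cV$ by itself only forbids an unbounded vacant component; a priori the square-crossing probabilities $f_n(u)$ could stay bounded away from $0$ throughout $[\bar{u}_c,u_c]$, just as crossing probabilities do at criticality in classical models. I would attack this via the randomized-algorithm (OSSS) method of Duminil-Copin, Raoufi and Tassion, as carried out for the vacant set of Poisson--Boolean percolation: reveal the ellipses of the underlying Poisson process one at a time, estimate the revealment of such an algorithm for the event ``$[0,n]^{2}$ is crossed by $\cV$'', and combine the OSSS inequality with a differential inequality in the intensity $u$ to obtain the dichotomy ``$\sum_k f_k(u)=\infty$'' — which through a block argument forces $\cV$ to percolate, so $u\le\bar{u}_c$ — versus ``$f_n(u)$ decays exponentially''. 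The genuine obstacle is the unbounded range: a large ellipse, once revealed, is highly informative, so the revealment bound must be coupled with a truncation separating ellipses of major axis below and above a slowly growing threshold, and the resulting error must be summable. The hypothesis $\alpha>2$ — equivalently, finiteness of the second moment of the major axis — is precisely what one expects to need for this, and obtaining the sharpness under this hypothesis rather than a stronger moment condition is the crux.

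Granting the sharpness step, the rest is routine. Fix $u>\bar{u}_c$; then $f_n(u)\to0$, hence $g_n(u)\to1$ by duality, so the covered-crossing probabilities of $[0,n]^{2}$ and of $[0,3n]\times[0,n]$ both exceed $1-\varepsilon$ at some scale $n_0$ with $\varepsilon$ arbitrarily small, which by RSW-type bounds for the ellipses model (available along the lines of the $\alpha=2$ analysis) propagates to all larger scales. Feeding this into the multi-scale renormalization scheme already used to prove Theorems~\ref{teo:alpha_phase_transition} and~\ref{teo:covered_set_alpha_phase_transition}, which is designed to absorb the long-range dependence by controlling the contribution of atypically large ellipses, yields that $\cE$ percolates, i.e.\ $u\ge u_c$. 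Together with the first step this gives $\bar{u}_c(\rho)=u_c(\rho)$. I expect the renormalization and RSW inputs to be within reach given the rest of the paper; the real bottleneck, and what keeps this a conjecture, is the sharpness estimate of the third paragraph under the sharp moment condition $\alpha>2$.
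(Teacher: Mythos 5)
The statement you are proving is Conjecture \ref{conj:equality_critical_points_alpha_greater_2}: the paper offers no proof of it and explicitly leaves it open, so there is nothing to compare your argument against except the paper's own remarks. Your proposal is a research program rather than a proof, and you say so yourself: the entire content of the hard inequality $u_c(\rho)\le\bar{u}_c(\rho)$ is delegated to a sharpness statement ($f_n(u)\to 0$ for every $u>\bar{u}_c$, ideally exponentially) that you do not establish. That is precisely the missing ingredient that keeps the statement a conjecture, and invoking the OSSS machinery of Duminil-Copin--Raoufi--Tassion does not close it here: the revealment/truncation scheme you sketch has to cope with anisotropic objects of unbounded range under the bare second-moment condition $\alpha>2$, and in addition the OSSS dichotomy produces a crossing-defined critical intensity which must then be identified with $\bar{u}_c$ (your ``block argument forces $\cV$ to percolate'' step), itself a nontrivial renormalization for a model with polynomial decay of correlations as in Lemma \ref{lema:decay_of_correlations}. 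Neither of these is routine, and neither is supplied.

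Two further points. First, even your ``easy'' direction $\bar{u}_c\le u_c$ is not free in this model: Remark \ref{remark:properties_of_uc_and_bar_uc} states that the authors \emph{believe} uniqueness of the infinite covered cluster and Zhang's argument can be adapted, ``but did not carry out the computations.'' A Burton--Keane argument for a continuum model whose grains have unbounded diameter (and hence whose configuration in a box is not determined by finitely many independent coordinates) requires a genuine verification of finite energy and of the trifurcation count, so citing it as ``familiar'' leaves a gap there too. Second, the RSW input you invoke in the last paragraph does not exist in the paper: the $\alpha=2$ analysis of Section \ref{section:vacant_crossing_of_boxes_for_alpha_equal_2} is a one-sided fractal-percolation coupling for the vacant set, not a Russo--Seymour--Welsh theory, and passing from square crossings close to $1$ to hard-direction rectangle crossings needs either RSW or the exponential decay you have not proved. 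In short: the strategy is sensible and correctly identifies where the difficulty lies, but as a proof it has an acknowledged hole exactly at the conjecture's core.
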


This would imply that when $\alpha > 2$ the phase transition in $u$ is rather classical, despite the long-range dependencies. In the beginning of Section \ref{section:phase_transition_for_existence_of_critical_point} we define both $\bar{u}_c$ and $u_c$ and discuss their relation more deeply (see remark \ref{remark:properties_of_uc_and_bar_uc}).

Let us discuss the ideas of the proofs and the main tools used in the paper. Our model can be defined as a Poisson point process on a larger space (see Definition \ref{defi:ellipses_model}). Thus, we are able to estimate the probability of many useful events by making an appropriate thinning of it, as described in Proposition \ref{prop:thinning} below. These estimates are in the core of many proofs. 

We provide two proofs that the plane is completely covered by ellipses iff $\alpha \le 1$. The first is a consequence of the estimates of Section \ref{section:probability_of_simple_events} and Borel-Cantelli lemma. The second makes use of an argument in Hall \cite{hall1985continuum} to relate total covering to the expected area of an ellipse being infinite.

The proof that $\cE$ percolates for every $u >0$ when $\alpha \in (1,2)$ follows from bounds on the probability of having a left-right covered crossing of a box done by exactly one ellipse. We build a sequence of nested boxes with the property that if we have covered crossings for all but finitely many of them we guarantee $\cE$ percolates. 

In order to prove that for $\alpha \in (1,2)$ the set $\cV$ does not percolate for any $u>0$, we adapt the proof of Proposition 5.6 from \cite{1010.5338}; we prove that with probability one there is an infinite number of circuits made of exactly three ellipses surrounding the origin, by a Borel-Cantelli argument.  

We finish Theorems \ref{teo:alpha_phase_transition} and \ref{teo:covered_set_alpha_phase_transition} by proving that when $\alpha > 2$ is fixed there is a phase transition in $u$ for the percolation of $\cV$ and also of $\cE$. This is done by dominating the ellipses model with a Boolean model with radius distribution $\rho$ (see \cite{gouere2008subcritical}). The phase transition for $\cE$ follows directly from this domination, but for $\cV$ we need to develop some additional arguments since \cite{gouere2008subcritical} does not study the vacant set. 

The proof of property \eqref{eq:vacant_crossing_bounded_away_intro} in Theorem \ref{teo:vacant_crossing_of_boxes_alpha_2} uses the estimates derived in the previous sections together with a coupling with fractal percolation, also known as Mandelbrot percolation \cite{ChayesDurrett1988fractal_percolation}. Our coupling uses the results of Ligget, Schonmann and Stacey \cite{liggett1997domination}. Finally, to conclude that $\cE$ does not percolate for small $u$ we use \eqref{eq:vacant_crossing_bounded_away_intro} together with a bound on decay of correlations and a generalization of Borel-Cantelli lemma from \cite{ortega1984sequence}, needed to deal with events that are not too far from being independent.

There are still some interesting unanswered questions regarding ellipses model. One of them is already stated as Conjecture \ref{conj:equality_critical_points_alpha_greater_2}. Another question is to understand better what actually happens when $\alpha = 2$. We only showed the existence of a phase transition in $u$ related to the probability of crossing boxes of fixed ratio, but it is possible that there are other phase transitions. For instance, one could define
\begin{equation*}
u_{\text{cross}}(\rho) := \sup\{\bar{u}; \ \text{\eqref{eq:vacant_crossing_bounded_away_intro} holds}\} \ \text{and} \ u_{\text{exp}}(\rho) := \inf\{u; \ P_{u,\rho}[0 \overset{\cV}{\leftrightarrow} \partial B(n)] \ \text{decays exponentially}\}
\end{equation*}
and check if any of them coincide with $u_c(\rho)$. Finally, it would be interesting to say anything about what happens in the critical point when $\alpha = 2$.

This paper is divided in seven Sections. In Section \ref{section:description_of_the_model} we define the ellipses model rigorously and prove that restricting the Poisson cylinder model of \cite{1010.5338} can be seen as a particular case of the ellipses model. Section \ref{section:probability_of_simple_events} collects estimates on the probability of useful events that are used in the next sections. Section \ref{section:phase_transition_for_total_covering} proves that the ellipses cover the whole plane iff $\alpha \le 1$; it also contains a slight generalization of total covering for other continuum percolation models on the plane and a law of large numbers for the number of ellipses covering a small euclidean ball $B(\varepsilon)$ when $\varepsilon \in [0,1/2)$ is fixed, according to the position of their centers. Section \ref{section:phase_transition_for_existence_of_critical_point} finishes the proof of Theorems \ref{teo:alpha_phase_transition} and \ref{teo:covered_set_alpha_phase_transition}, Section \ref{section:decay_of_correlations_with_distance} provides tools to bound the correlation of some events and proves that translations are ergodic for the ellipses model and Section \ref{section:vacant_crossing_of_boxes_for_alpha_equal_2} proves Theorem \ref{teo:vacant_crossing_of_boxes_alpha_2}. 

\vspace{2mm}
\textbf{Acknowledgments:} The authors would like to thank Caio Alves and Serguei Popov for insights on how to find the major axis distribution for the ellipses model derived from Poisson cylinder model. Also, we thank Leandro Cruz for Figure \ref{fig:leandro_simulation}. This work had financial support from CNPq grants 306348/2012-8 and 478577/2012-5, FAPERJ by grants 202.231/2015 and 200.195/2015 and also from Capes.

\section{Description of the Model}
\label{section:description_of_the_model}

\vspace{4mm}
We begin defining a model that provides a random collection of ellipses on $\RR^2$. This process will be referred to as the \textit{ellipses model}. To build it, we need three ingredients:
\begin{enumerate}
 \item Denote by $\lambda$ the Lebesgue measure on $\RR^2$. Given $u \in (0, \infty)$, we consider a Poisson Point Process (PPP) on $\RR^2$ with intensity $u \lambda$ which we denote by $\omega = \sum_i \delta_{x_i}$, where $\{x_i\} \subset \RR^2$ is countable and locally finite. A concise notation for this definition is $\omega \distr \text{PPP}(u \lambda)$. A reference for PPP's can be found on \cite{resnick2013extreme}.

 \item Given $\alpha > 0$, let $\rho$ be a distribution on $[1,\infty)$ such that $P[R \geq r] \asymp r^{- \alpha}$ for $r \geq 1$, i.e., there is a constant $\uc{const:R_decay} = \uc{const:R_decay}(\rho) > 0$ such that 
\newconstant{const:R_decay} 
\begin{equation}
\uc{const:R_decay}^{-1} r^{-\alpha} \le \rho[r,\infty) \le \uc{const:R_decay} r^{-\alpha}, \ \forall r \geq 1
\label{eq:def_of_R}
\end{equation}
 
 \item A random variable $V$ with distribution $V \distr \text{U}(0,\pi)$. The law of $V$ will be denoted by $\nu$.
\end{enumerate}

Define $S = \RR^2 \times [1, \infty) \times (- \pi/2, \pi/2] \subset \RR^4$.
 
\begin{defi}
\label{defi:ellipses_model}
The \textit{ellipses model} is a PPP on $S$ with intensity measure given by the product 
$(u\lambda) \otimes \rho \otimes \nu$. We denote it by $\xi = \sum_i \delta_{s_i}$.
\end{defi}

Let us see how the ellipses model can be seen as an actual collection of ellipses on $\RR^2$. 
Whenever we say ellipse, we mean the curve described by the ellipse together with its interior. 
For an element $(z, R, V) \in S$, we define $E(z, R, V)$ as the ellipse with center $z$ 
and major axis of size $R$, minor axis of size 1 and direction $v$. In this way, $E$ is a function from $S$ to 
the subsets of $\RR^2$ that provides a natural identification of $S$ and the ellipses we 
work with. We denote by $\cE := \cE(\xi)$ the random subset of $\RR^2$ formed by the 
union of ellipses given by the PPP on $\RR^4$:

$$
\cE(\xi) = \bigcup_{s \in \supp \xi} E(s).
$$

We also write $\cV := \RR^2 \backslash \cE$. The sets $\cE$ and $\cV$ will be called 
the \textit{covered} and \textit{vacant} sets, respectively. Our greatest concern with 
the ellipses model will be to understand the connectivity behavior of the area covered by the 
ellipses and its complement, as we change the parameters of the model.

We highlight two important parameters of the ellipses model:

\begin{itemize}
 \item The parameter $\alpha$ which controls the tail of the distribution of the ellipses' major axes. As its value grows, the size of the major axes tends to have lighter tails.
 
 \item The parameter $u$, which controls the intensity of the Poisson Point Process.
\end{itemize}

We will refer to the ellipses model by writing $(u, \rho)$-ellipses model. When working with the associated probability space $P_{u, \rho}$, we sometimes omit the dependence on these parameters if there is no danger of confusion.

\begin{remark}
\label{remark:invariances_of_ellipses_model}
It is worth mentioning that our model has translational and rotational invariance. Indeed, this follows from two facts. The first is that the Lebesgue measure on $\RR^2$ has rotational and translational invariance. The second one is our choice of uniform  distribution for the directions of the ellipses.
\end{remark}

\vspace{2mm}
\noindent\textbf{Notation:} We constantly use the following notation. For denoting boxes in $\RR^2$, let %
$$B_{\infty}(l;k) = [-lk/2, lk/2] \times [-l/2, l/2].$$%
Denote by $L^{-}(l;k)$ and $L^{+}(l;k)$ the left and right sides of box $B_{\infty}(l;k)$; that is, the sets $\{ -lk/2 \} \times [-l/2, l/2]$ and $\{ +lk/2 \} \times [-l/2, l/2]$, respectively. We denote the euclidean ball on $\RR^d$ with center on a point $w$ and radius $r$ by $B(w,r)$.

Given an ellipse $E = E(s)$ with $s \in S$, it is useful to be able to recover its defining parameters; we define $c(E)$ as the point in $\RR^2$ that is the center of the ellipse. Define also $R(E)$ as the size of the major axis of $E$ and $V(E)$ as the direction of its major axis, in the interval $(-\pi/2, \pi/2]$. In the specific case where $R(E) = 1$, we will not be able to recover $V(E)$, but this has no relevance in this work. Using the identification we already mentioned, these functions are obtained by composing $E^{-1}$ and a projection defined on $S$.

We also add a short note on our notation for constants. Constants that appear during calculations are generally denoted by $c$ or $C$ and can change from line to line. However, for more important constants we add a subscript number referring to their first appearance in the text.

\subsection{Relation with Poisson Cylinder Model}
\label{subsection:relation_with_poisson_cylinder_model}

The idea of the ellipses model comes from an article of Tykesson and Windisch \cite{1010.5338}. In \cite{1010.5338}, they define a Poisson cylinder model in $\RR^d$ and study whether the vacant set, the one obtained after removing all cylinders, percolates or not. The ellipses come up when we restrict our attention to the intersection of the cylinders and $\RR^2 \times \{ 0 \}^{d-2}$.

Clearly, percolation of the vacant set in a hyperplane implies percolation in all space $\RR^d$. This simplification is important in their proof that for $d \geq 4$ there is a non-trivial phase transition in parameter $u$. However, when $d=3$, a different phenomenon takes place. Tykesson and Windisch show that, $\forall u > 0$, there is  an infinite number of circuits of ellipses surrounding the origin with probability one. Hence, there can't be percolation of the vacant set restricted to a plane.

Returning to the ellipses model, we can link it with the Poisson cylinder model for dimension $d=3$ through 
the following proposition:

\begin{prop}
\label{prop:relation_Poisson_cylinder_model}
The Poisson cylinder model for $d=3$ restricted to a plane is equivalent to an ellipses model, when we take $\rho(r, \infty) = r^{-2}$ for $r \geq 1$.
\end{prop}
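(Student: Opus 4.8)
The plan is to compute explicitly the distribution of the random ellipse obtained by slicing a single Poisson cylinder in $\RR^3$ with a fixed plane, and then verify that the Poisson process of such slices has exactly the product intensity $(u\lambda)\otimes\rho\otimes\nu$ with $\rho[r,\infty)=r^{-2}$. First I would recall the definition of the Poisson cylinder model in $\RR^3$: cylinders of radius $1$ whose axes are distributed according to a Poisson process on the space of lines in $\RR^3$, with the (essentially unique up to scaling) measure that is invariant under rigid motions. A line not parallel to the slicing plane $P=\RR^2\times\{0\}$ meets $P$ in a single point, and the radius-$1$ cylinder around that line intersects $P$ in an ellipse whose minor axis equals $1$ (the cylinder's radius), whose major-axis direction is determined by the projection of the line onto $P$, and whose major axis has length $1/\sin\theta$, where $\theta\in(0,\pi/2]$ is the angle the line makes with $P$ — lines nearly parallel to $P$ give very elongated ellipses. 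Lines parallel to $P$ (a set of measure zero, or giving slices that are empty or full strips) can be discarded.

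Next I would use the standard parametrization of the invariant measure on lines in $\RR^3$: a line is given by its direction $\theta\in S^2$ (taken on the upper hemisphere, or equivalently by a unit vector) together with a point in the plane through the origin perpendicular to that direction, and the invariant measure is the product of the uniform (area) measure on directions and Lebesgue measure on the perpendicular plane. Changing variables from ``perpendicular-plane coordinate'' to ``intersection point $z\in P$'' introduces a Jacobian factor equal to $\sin\theta$ (the cosine of the angle between the perpendicular plane and $P$), which precisely converts the invariant line measure into $(\text{const}\cdot\lambda_P(dz))\otimes(\text{something in }\theta)$. The key computation is then to push the hemisphere direction measure forward through the map $\theta\mapsto(\text{major axis length }R=1/\sin\theta,\ \text{azimuthal angle }V)$. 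By rotational symmetry the azimuthal part is uniform on $(-\pi/2,\pi/2]$, giving the factor $\nu$; and a direct calculation with the area element on $S^2$, written in terms of the polar angle measured from the plane $P$, shows that $R=1/\sin\theta$ has tail $P[R\ge r]=P[\sin\theta\le 1/r]\propto r^{-2}$ — the exponent $2$ coming from the two-dimensional solid-angle scaling near the ``equator'' $\theta\to 0$. After normalizing $u$ so that the intensity of intersection points is $u\lambda$, the three factors assemble into exactly $(u\lambda)\otimes\rho\otimes\nu$ with $\rho[r,\infty)=r^{-2}$, which by the mapping theorem for Poisson processes (thinning/restriction plus pushforward, as used in Definition \ref{defi:ellipses_model}) is the claimed ellipses model.

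I would present the identification at the level of intensity measures rather than coupling individual points: a Poisson process on the space of lines, restricted to lines non-parallel to $P$ and pushed forward by the ``intersect with $P$ and read off the ellipse parameters'' map, is again Poisson, and two Poisson processes with the same intensity measure have the same law. This reduces the whole proposition to the Jacobian computation plus the hemisphere pushforward described above.

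The main obstacle, and the part deserving the most care, is getting the two changes of variables exactly right: first the Jacobian $\sin\theta$ arising when one reparametrizes the invariant line measure by the plane-intersection point $z$, and second the pushforward of the hemisphere area measure under $\theta\mapsto 1/\sin\theta$, where one must be careful about which angle is being used (angle with the plane versus angle with the normal) so that the solid-angle element near the equator genuinely produces the $r^{-2}$ tail and not, say, $r^{-3}$. Everything else — rotational invariance giving uniform direction, the Poisson mapping theorem, discarding the measure-zero set of parallel lines — is routine.
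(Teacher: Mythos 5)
Your overall strategy matches the paper's: the paper's proof of Proposition \ref{prop:relation_Poisson_cylinder_model} consists of invoking Proposition \ref{prop:intensity_measure_TW_model} (the rigid-motion-invariant line measure, reparametrized by the intersection point with the plane, factorizes as $\lambda\otimes\phi$ with $\phi(\mathrm{d}w)=c\cos\psi(w)\,\sigma(\mathrm{d}w)$) and then computing the tail of $R=1/\cos\psi$ under the normalized $\phi$. You correctly assemble the same ingredients, including the Jacobian $\sin\theta=\cos\psi$ that appears when one projects the plane perpendicular to the line onto $P$ along the line's direction; this Jacobian is exactly the density $\cos\psi$ in the paper's $\phi$.

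There is, however, a genuine error in how you describe the decisive tail computation. You attribute the exponent $2$ to ``the two-dimensional solid-angle scaling near the equator'' and, in your final paragraph, say one must check that ``the pushforward of the hemisphere area measure'' under $\theta\mapsto 1/\sin\theta$ produces the $r^{-2}$ tail. Neither claim is correct. The set of directions giving $R\geq r$ is the equatorial band $\{\sin\theta\leq 1/r\}$, and a band of angular width $\varepsilon$ around the equator has \emph{uniform} solid-angle measure of order $\varepsilon$, not $\varepsilon^2$ (caps around a pole scale quadratically; equatorial bands scale linearly). So the pushforward of the uniform hemisphere measure has tail $\asymp r^{-1}$, and the computation as you describe it yields $\alpha=1$, not $\alpha=2$; the trap is $r^{-1}$, not the $r^{-3}$ you guard against. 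The second factor of $r^{-1}$ comes precisely from the $\sin\theta$ Jacobian density you derived one step earlier but then drop: the measure to push forward is $\sin\theta\cos\theta\,\mathrm{d}\theta\,\mathrm{d}\varphi$ (i.e.\ $\cos\psi\,\sigma(\mathrm{d}w)$, the $\phi$ of Proposition \ref{prop:intensity_measure_TW_model}), for which $\int_0^{\arcsin(1/r)}\sin\theta\cos\theta\,\mathrm{d}\theta=\tfrac{1}{2}r^{-2}$, giving $\rho[r,\infty)=r^{-2}$ after normalization. The fix is simply to carry the bias factor into the tail integral rather than only into the $\lambda_P$ factor, but as written your plan would not produce the stated exponent.
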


Proposition \ref{prop:relation_Poisson_cylinder_model} states which is the distribution $\rho$ for this specific model. However, the original definition of how to construct the random cylinders in \cite{1010.5338} does not provide us with this distribution explicitly. Therefore, we now present one way to obtain it.

Let $e_3 = (0,0,1) \in \RR^3$ and denote the usual inner product on $\RR^3$ by $\langle \cdot \, , \cdot \rangle$. Also, denote by $D$ the upper half of $S^2$, that is $D = \{ w \in \RR^3 : \ ||w|| = 1 \text{ and } \langle w \, , e_3 \rangle > 0 \}$. Consider the following construction:
\begin{enumerate}[C.1)]
 \item Start with a PPP $\gamma$ on the plane $\RR^2 \times \{0\}$ with intensity measure $u \lambda$, where $\lambda$ is the Lebesgue measure. The points on $\supp \gamma$ will be the points where the axes of the cylinders intersect the plane.
 \item Independently for every point $r \in \supp \gamma$, choose a random direction on $D$. The distribution of this random direction will be denoted by $\beta$, and each $r \in \supp \gamma$ has its own $\beta_r$, independent of everything else. The lines passing through $r$ and with direction $\beta_r$ for $r \in \supp \gamma$ will be axes of the cylinders. 
\end{enumerate}

We would like to know what should be the distribution of $\beta$ in order for this alternative construction to be equivalent to the construction in \cite{1010.5338}; $\rho$ is easily obtained from $\beta$. A first guess on $\beta$ could be the uniform distribution on $D$. However, this guess is wrong.

Let $\mu$ be a non-trivial measure on $\LL$ which is invariant with respect to rotations and translations. We begin by identifying the space $\LL$ of lines in $\RR^3$ with the space $D \times \RR^2$. More precisely, we do not identify all space $\LL$, but instead we work with $\LL^\ast$, the space of all lines in $\RR^3$ which are not contained in some plane $\RR^2 \times \{z\}$. As we will see, the fact that our measure $\mu$ has rotational and translational invariance implies that $\mu(\LL\backslash\LL^\ast) = 0$ and thus $\LL\backslash\LL^\ast$ can be ignored (see Remark \ref{remark:ignore_LL_backslash_LLast} below Proposition \ref{prop:intensity_measure_TW_model}).

Notice that any line $l \in \LL^\ast$ intersects the plane $\RR^2 \times \{0\}$ at exactly one point; denote its $(x,y)$ coordinates by $p(l)$. Also, every $l$ has an unequivocal direction in $D$, denoted by $d(l)$. Consider the function $\Phi : \LL^\ast \to D \times \RR^2 \text{ such that } l \mapsto (d(l), p(l))$. It is clear that $\Phi$ is a bijection. So, in order to know the measure $\mu$ restricted to $\LL^\ast$ we just need to understand what is the induced measure on the space $D \times \RR^2$, and we denote this measure by $\tilde{\mu}$.

For any $w \in D$ define $\psi(w)$ as the angle between $e_3$ and $w$; this can be written as $\psi(w) = \arccos \langle e_3, w \rangle$. We state Proposition \ref{prop:intensity_measure_TW_model} without providing a proof. This result was already known, although not exactly in this formulation (see \cite{kendall1963geometrical}, pages 93-100). In \cite{ungaretti2017ellipses_percolation} we provide a complete proof, based on discussions with Caio Alves and Serguei Popov.

\begin{prop}
\label{prop:intensity_measure_TW_model}
Let $\mu$ be a non-trivial measure on $\LL$ which is translational and rotational invariant and let $\tilde{\mu}$ be its induced measure on the space $D \times \RR^2$ through the function $\Phi$. Then $\tilde{\mu} = \phi \otimes \lambda$ where $\lambda$ is Lebesgue measure on $\RR^2$ and 

$$
\phi(A) = c \int_A \cos\psi(w) \,\sigma(\text{d}w) \text{ for } A \in \cB(D),
$$

\noindent with $\sigma$ being the uniform measure on $D$ and $c >0$ a universal constant.
\end{prop}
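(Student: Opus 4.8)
\textbf{Setup.} The plan is to identify the invariant measure $\mu$ on $\LL^\ast$ by transporting it via $\Phi$ to $D \times \RR^2$ and then using the invariances to pin down its form up to a constant. First I would observe that translations of $\RR^3$ that fix the $e_3$-direction act on $\LL^\ast$, and under $\Phi$ they act as translations in the $\RR^2$ factor while leaving the $D$ factor fixed. Since $\mu$ is translation invariant, $\tilde\mu$ is invariant under all translations of $\RR^2$. A standard disintegration argument then gives $\tilde\mu = \phi \otimes \lambda$ for some measure $\phi$ on $D$: one writes $\tilde\mu$ as a mixture of its conditional measures on the fibers $\{w\} \times \RR^2$, each of which must be translation invariant on $\RR^2$, hence a multiple $\phi(\{w\})$-``density'' times Lebesgue measure; measurability of the disintegration makes $\phi$ a genuine measure on $\cB(D)$.

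\textbf{Determining $\phi$.} The remaining task is to show $\phi(\mathrm{d}w) = c\cos\psi(w)\,\sigma(\mathrm{d}w)$. Here I would use rotational invariance. Rotations of $\RR^3$ fixing $e_3$ (i.e. rotations about the vertical axis) act on $D$ preserving $\sigma$ and preserving $\psi$, and they commute with the product structure; this forces $\phi$ to be invariant under azimuthal rotations, so $\phi$ depends only on the polar angle $\psi(w)$. To get the precise $\cos\psi$ weight, I would bring in a rotation $\mathcal{R}$ that tilts $e_3$: compare the measure of a thin slab $\RR^2 \times [0, h]$ worth of lines. A cleaner route: fix a line direction $w$ and a reference plane; the set of lines with direction near $w$ crossing a fixed planar region $A \times \{0\}$ is, after applying a rotation taking $w$ to $e_3$, the set of near-vertical lines crossing the region $A' \times \{0\}$, where $A'$ is the orthogonal projection of $A \times \{0\}$ onto the plane perpendicular to $w$, and $\lambda(A') = \cos\psi(w)\,\lambda(A)$. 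Matching the two computations of $\mu$ of this set of lines — once in the original coordinates where the base region has area $\lambda(A)$ and the direction-fiber density is $\phi(\mathrm{d}w)$, once in the rotated coordinates where the base region has area $\cos\psi(w)\lambda(A)$ and the fiber density near the vertical is the constant $\phi$-density at the north pole — yields $\phi(\mathrm{d}w) = c\cos\psi(w)\,\sigma(\mathrm{d}w)$ with $c = $ that north-pole density.

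\textbf{Main obstacle.} The hard part will be the last matching step: making rigorous the ``Jacobian'' computation relating a parametrization of $\LL^\ast$ by (point on $\RR^2\times\{0\}$, direction in $D$) to the rotated parametrization by (point on the plane $\perp w$, direction near $e_3$), and checking that the only distortion is the factor $\cos\psi(w)$ coming from the area of the projected region, with \emph{no} extra Jacobian factor from the direction coordinate because the rotation acts isometrically on $D$ with respect to $\sigma$. This is essentially the classical computation underlying the invariant measure on affine lines in $\RR^3$ (kinematic measure), and since the excerpt explicitly defers to \cite{kendall1963geometrical} and \cite{ungaretti2017ellipses_percolation} for the full details, I would present the disintegration and the symmetry reductions carefully and then invoke the projection-area identity, citing those references for the verification that no spurious Jacobian appears.
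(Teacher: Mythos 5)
The paper does not actually prove Proposition \ref{prop:intensity_measure_TW_model}: it is explicitly stated without proof, with the details deferred to \cite{kendall1963geometrical} (pp.~93--100) and to the companion reference \cite{ungaretti2017ellipses_percolation}. So there is no in-paper argument to compare yours against; what can be assessed is whether your outline would yield a correct proof. In structure it does: the two-step scheme (translation invariance in the horizontal directions forces $\tilde\mu = \phi\otimes\lambda$; rotational invariance plus the projected-area identity $\lambda(A') = \cos\psi(w)\,\lambda(A)$ pins down $\phi$) is exactly the classical integral-geometric derivation of the kinematic measure on lines, and the Jacobian you identify is the right one -- the only distortion in passing from the parametrization of a line by its intersection with the plane $w^{\perp}$ to its intersection with $\RR^2\times\{0\}$ is the factor $\cos\psi(w)$, with no contribution from the direction coordinate.

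Two points in your sketch need repair before it is a proof. First, the disintegration step is better phrased without conditional measures: for each Borel $A\subset D$ the set function $B\mapsto\tilde\mu(A\times B)$ is a translation-invariant, locally finite measure on $\RR^2$ (local finiteness coming from the assumed local finiteness of $\mu$), hence equals $\phi(A)\lambda(B)$; countable additivity of $\phi$ and a $\pi$-system argument finish it. Second, and more seriously, your ``matching'' step assumes that $\phi$ has a density with respect to $\sigma$ that is continuous at the north pole (``the fiber density near the vertical is the constant $\phi$-density at the north pole''), which is not known at that stage and is essentially what you are trying to prove. The standard fix is to avoid densities: either compare $\phi$-measures of small spherical caps of equal $\sigma$-measure around $w$ and around $e_3$ and pass to the limit, or first show that in the $(w,q)$-parametrization with $q\in w^{\perp}$ the full rotation group acts and the invariant measure is uniquely $c\,\sigma\otimes\lambda_{w^{\perp}}$, and only then perform the single change of variables to $(w,p(l))$. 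As written, your proposal defers precisely this crux to the same references the paper cites, so it does not yet supply the argument the paper omits, but the plan is the right one.
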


\begin{remark}
\label{remark:ignore_LL_backslash_LLast}
Proposition \ref{prop:intensity_measure_TW_model} explicits the intensity measure for the PPP restricted to $\LL^\ast$. Using this representation and rotational invariance we can conclude that $\mu(\LL\backslash\LL^\ast) = 0$. Indeed, if $\cR$ is any rotation that does not leave the $xy$ plane invariant then $\{d(l); l \in \cR(\LL\backslash\LL^\ast)\}$ is  the intersection of $D$ with a plane and must have $\sigma$ measure zero.
\end{remark}

\begin{proof}[Proof of Proposition \ref{prop:relation_Poisson_cylinder_model}]
The proof is immediate from Proposition \ref{prop:intensity_measure_TW_model}, once we notice that $\rho(r, \infty) = P[R \geq r] = \beta(\{w \in D; \tfrac{1}{\cos \psi(w)} \geq r\})$.
\end{proof}

\section{Probability of Simple Events}
\label{section:probability_of_simple_events}

We would like to prove some properties of the ellipses model which are analogous to classical results in percolation. For this purpose, it is useful to have in hands estimates on the probability of some simpler, more basic events. These events will be used later to build more complex ones. This section is devoted to collecting these estimates.

We want to estimate the probability of the intersection and covering events
\begin{equation}
\{ \cE \cap A \neq \varnothing \} \ \text{and} \ \{ A \subset \cE \}
\label{eq:simple_events}
\end{equation}
where $A \subset \RR^2$ is some fixed set. Such estimates are done in two steps. First, we fix a point $z \in \RR^2$ and try to bound the probability of a random ellipse centered at $z$ to intersect (or cover) $A$. Of course, this probability will depend on $z$ and $A$, and we would like that this dependence is not too complicated for calculations. For that reason, we will only be concerned with sets $A$ which are reasonably simple, such as points, segments and balls.

The second step consists in taking into consideration the positions of all the centers of ellipses on the support of $\omega$, the PPP on $\RR^2$. This can be studied as a \textit{thinning} of the PPP $\omega$. We use a proposition that can be found, for instance, in Meester and Roy \cite{meester1996continuum}, Proposition 1.3.  We adapted their version to our notational conventions:

\begin{prop}
\label{prop:thinning}
Let $\omega$ be a PPP on $\RR^d$ with intensity measure $u\lambda$ and $g$ be a measurable function $g:~\RR^d~\to~[0,1]$. Define $\omega_g$ the thinning of $\omega$ that keeps every point $z \in \supp \omega$ independently with probability $g(z)$. Then $\omega_g$ is a non-homogeneous PPP on $\RR^d$ with intensity measure $\mu_g$ given by 
\begin{equation*}
\mu_g(U) = u\int_{U}g(z)\, \text{d}z.
\end{equation*}
\end{prop}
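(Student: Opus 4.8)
The plan is to verify directly that $\omega_g$ satisfies the two defining properties of a Poisson point process with intensity measure $\mu_g$: (i) for every bounded Borel set $U \subset \RR^d$ the count $\omega_g(U)$ has a Poisson law with parameter $\mu_g(U) = u\int_U g(z)\,\mathrm{d}z$, and (ii) for pairwise disjoint bounded Borel sets $U_1,\dots,U_m$ the counts $\omega_g(U_1),\dots,\omega_g(U_m)$ are independent. I would first record that $\mu_g$ is a well-defined locally finite measure, since $0 \le g \le 1$ forces $\mu_g(U) \le u\lambda(U) < \infty$ on bounded sets, and that $\omega_g$ is a.s.\ a locally finite simple point process because it is dominated by $\omega$.

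For (i), fix a bounded Borel $U$ with $\lambda(U) > 0$ and condition on $\{\omega(U) = n\}$. Conditionally, the $n$ points of $\omega$ inside $U$ are i.i.d.\ with the uniform law $\lambda(\cdot \cap U)/\lambda(U)$, and each is retained independently with probability $g$ evaluated at its location; hence, conditionally, $\omega_g(U)$ is a sum of $n$ i.i.d.\ Bernoulli variables with common success probability $p_U := \lambda(U)^{-1}\int_U g(z)\,\mathrm{d}z$, i.e.\ $\omega_g(U) \mid \{\omega(U)=n\} \sim \mathrm{Bin}(n,p_U)$. Summing over $n$ against $\PP[\omega(U)=n] = e^{-u\lambda(U)}(u\lambda(U))^n/n!$ and using the standard Poisson--Binomial identity yields $\omega_g(U) \sim \Poi(u\lambda(U)\,p_U) = \Poi(\mu_g(U))$; the case $\lambda(U)=0$ is trivial since then $\omega(U)=0$ a.s. For (ii), I would condition on the vector $(\omega(U_1),\dots,\omega(U_m))$: these coordinates are independent by the restriction property of $\omega$, and conditionally the retained points in each $U_i$ are a function only of the (independent) point locations inside $U_i$ and of independent retention coins, so the $\omega_g(U_i)$ are conditionally independent; combining this with the computation of (i) performed simultaneously in each $U_i$ gives unconditional independence and the product form of the joint law.

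An alternative, slightly slicker route proving (i) and (ii) at once is to compute the Laplace functional of $\omega_g$: realizing the retention via independent marks and averaging over them first gives $\EE\big[e^{-\int f\,\mathrm{d}\omega_g}\big] = \EE\big[\prod_{z \in \supp\omega}\big(1 - g(z)(1 - e^{-f(z)})\big)\big]$, and the exponential (Campbell) formula for $\omega$ turns this into $\exp\!\big(-u\int_{\RR^d}(1 - e^{-f(z)})\,g(z)\,\mathrm{d}z\big)$, which is exactly the Laplace functional of a Poisson process with intensity $\mu_g$; uniqueness of Laplace functionals then concludes.

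The only genuinely delicate point is the rigorous construction of the thinning and the justification that ``each point is kept independently with probability $g(\cdot)$'' given the locations: formally one realizes $\omega_g$ on an enlarged probability space by attaching to each point of $\omega$ an independent $\mathrm{Uniform}[0,1]$ mark and keeping the point exactly when its mark is below $g$ of its location, after which the conditional statements above become instances of the marking theorem for Poisson processes. Everything else is routine bookkeeping, so I do not expect a substantial obstacle.
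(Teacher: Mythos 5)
Your proof is correct, but note that the paper does not actually prove this proposition: it simply imports it from Meester and Roy (Proposition~1.3 of their book, as cited in the text), so there is no in-paper argument to compare against. Your two routes are both the standard ones. The conditioning argument (given $\omega(U)=n$, the points are i.i.d.\ uniform on $U$, each retained with probability $g$ at its location, so $\omega_g(U)\mid\{\omega(U)=n\}\sim\mathrm{Bin}(n,p_U)$, and the Poisson--Binomial mixture collapses to $\Poi(\mu_g(U))$) is elementary and transparent, and your treatment of independence over disjoint sets via the restriction property plus independent retention coins is sound. The Laplace-functional route is indeed slicker, handling the finite-dimensional distributions in one stroke via Campbell's formula and uniqueness of Laplace functionals. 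You are also right that the only point requiring care is the rigorous meaning of ``kept independently with probability $g(z)$,'' which is properly formalized by attaching i.i.d.\ $\mathrm{Uniform}[0,1]$ marks and invoking the marking theorem; with that in place, both of your arguments go through without gaps.
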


Before stating our results, we notice that the invariances of the process (see Remark \ref{remark:invariances_of_ellipses_model}) can be used to make our task easier. We can apply any rigid motion to $A$  without altering the probability of the events on \eqref{eq:simple_events}.

\subsection{Estimates for covering a small ball}

We investigate the probability of covering an euclidean ball $B(w, \varepsilon)$ for $0 \le \varepsilon < 1/2$. Notice that we allow $\varepsilon = 0$ i.e., $B(w, 0) = \{w\}$.

\newconstant{const:ellipse_covers_small_ball}
\begin{lema}
\label{lema:ellipse_covers_small_ball_step1}
Let $w, z \in \RR^2$ and $0 \le \varepsilon < \tfrac{1}{2}$. Let $E_z$ be a random ellipse centered on $z$ and whose major axis is distributed as $\rho \otimes \nu$. Then, there are constants $\uc{const:ellipse_covers_small_ball} = \uc{const:ellipse_covers_small_ball}(\uc{const:R_decay}, \alpha) > 0$ and $c(\varepsilon) \geq 2$ such that for $|z - w| > c(\varepsilon)$ we have
\begin{equation*}
\uc{const:ellipse_covers_small_ball}^{-1} |z - w|^{-(\alpha + 1)} 
\le P(B(w, \varepsilon) \subset E_z) 
\le \uc{const:ellipse_covers_small_ball} |z - w|^{-(\alpha + 1)}.
\end{equation*}
\end{lema}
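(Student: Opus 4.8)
The plan is to reduce the event $\{B(w,\varepsilon)\subset E_z\}$ to a condition on the pair $(R,V)$ describing the random ellipse, then integrate against $\rho\otimes\nu$. Fix $z$ and $w$ with $d:=|z-w|$ large, and by rotational invariance assume $w-z$ lies along the $x$-axis, so $w = z+(d,0)$. An ellipse centered at $z$ with major axis $R$ in direction $V$ and minor axis $1$ is, after rotating by $-V$, the set $\{(x,y): (2x/R)^2+(2y)^2\le 1\}$. A point at distance $d$ from the center in direction making angle $V$ with the major axis lies inside the ellipse iff $(2d\cos V/R)^2 + (2d\sin V)^2 \le 1$. The dominant constraint for large $d$ is $|\sin V|\lesssim 1/d$: the ellipse is thin, so to reach a far point the direction $V$ must be nearly aligned with the displacement $w-z$, and then one also needs $R\gtrsim 2d$. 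So heuristically the event requires $|V|$ (or $|V-\pi|$, working mod $\pi$) of order $1/d$ and, conditionally on such $V$, requires $R$ to exceed roughly $2d$. That gives $\nu$-probability $\asymp 1/d$ for the angle times $\rho$-probability $\asymp d^{-\alpha}$ for the axis length, yielding $d^{-(\alpha+1)}$. The first step is to make this precise: I would show there are constants $c_1,c_2>0$ and a function $c(\varepsilon)\ge 2$ such that for $d>c(\varepsilon)$,
\begin{equation*}
\{|V|<c_1/d \text{ and } R> c_2 d\}\subset \{B(w,\varepsilon)\subset E_z\}\subset \{|V|<c_2/d \text{ and } R> c_1 d\}
\end{equation*}
(angles taken in $(-\pi/2,\pi/2]$, with the analogous statement near the other endpoint handled by the fact that direction is only defined mod $\pi$ — actually since $V\in(-\pi/2,\pi/2]$ parametrizes lines, the single band $|V|<c/d$ suffices). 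The role of $\varepsilon<1/2$ is exactly that $B(w,\varepsilon)$ fits, with room to spare, inside the unit-width tube of a sufficiently long ellipse, so requiring $\varepsilon<1/2$ keeps the constants uniform in $\varepsilon$ after possibly enlarging $c(\varepsilon)$; I would spell out the elementary inequality that if $d>c(\varepsilon)$, $|V|< c_1/d$ and $R>c_2 d$ then every point of $B(w,\varepsilon)$ satisfies the ellipse inequality, and conversely.

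**Integrating.**

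Given the sandwiching of events, the upper and lower bounds both follow by computing $(\rho\otimes\nu)$ of a product set $\{|V|<a/d\}\times\{R>b d\}$. Since $\nu$ is uniform on $(0,\pi)$ — here I must be slightly careful to match the paper's convention $V\in(-\pi/2,\pi/2]$ with $V\distr\mathrm U(0,\pi)$; these agree as parametrizations of directions, and the band $\{|V|<a/d\}$ has $\nu$-measure $\asymp a/d$ for $d$ large — we get $\nu(\{|V|<a/d\}) = \min(2a/d,\pi)\cdot\frac1\pi \asymp a/d$. And $\rho[bd,\infty)\in[\uc{const:R_decay}^{-1}(bd)^{-\alpha},\,\uc{const:R_decay}(bd)^{-\alpha}]$ by \eqref{eq:def_of_R}, provided $bd\ge 1$, which holds once $d>c(\varepsilon)\ge 2$. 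By independence of $R$ and $V$ under $\rho\otimes\nu$, the product is $\asymp a b^{-\alpha} d^{-(\alpha+1)}$, and absorbing $a,b$ and $\uc{const:R_decay}$ into a single constant $\uc{const:ellipse_covers_small_ball}=\uc{const:ellipse_covers_small_ball}(\uc{const:R_decay},\alpha)$ gives the claim. This step is routine; the only thing to track is that all the constants depend only on $\uc{const:R_decay}$ and $\alpha$ and not on $\varepsilon$ (the $\varepsilon$-dependence being quarantined inside $c(\varepsilon)$, the threshold beyond which $d$ must lie).

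**Main obstacle.**

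The substantive work is entirely in the first step — the two geometric containments of events. The clean way is to write the condition ``$B(w,\varepsilon)\subset E_z$'' as ``for all $p$ with $|p-w|\le\varepsilon$, $Q(p)\le 1$'' where $Q$ is the quadratic form of the ellipse, and then to bound $\sup_{|p-w|\le\varepsilon}Q(p)$ above and below by expressions in $(d,R,V)$ alone. For the upper containment one uses that $Q(p)\ge (2\langle p-z, u_V\rangle_\perp)^2$ where $u_V$ is the major-axis direction and $\langle\cdot,\cdot\rangle_\perp$ the component orthogonal to it: already the center $w$ must satisfy this, forcing $d|\sin V|\lesssim 1$, i.e.\ $|V|\lesssim 1/d$; and $Q(w)\ge (2 d\cos V/R)^2$ forces $R\gtrsim d$. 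For the lower containment one needs the reverse: if $|V|$ is small enough and $R$ large enough relative to $d$, then for every $p\in B(w,\varepsilon)$ both $(2\langle p-z,u_V\rangle/R)^2$ and $(2\langle p-z,u_V\rangle_\perp)^2$ are each at most $1/2$, hence $Q(p)\le 1$; this is where $\varepsilon<1/2$ enters, since the transverse slack is $1-2d|\sin V|-2\varepsilon$-ish and must stay positive. Carrying these elementary but slightly fiddly trigonometric estimates through, with honest tracking of which constants may depend on $\varepsilon$, is the crux; everything after it is bookkeeping.
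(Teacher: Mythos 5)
Your proposal is correct and follows essentially the same route as the paper: sandwich $\{B(w,\varepsilon)\subset E_z\}$ between rectangular events of the form $\{|V|\lesssim 1/d,\ R\gtrsim d\}$ (the upper containment from the necessary condition at the center alone, the lower containment by splitting the quadratic form into two pieces each bounded by $1/2$, which is exactly where $\varepsilon<1/2$ and the threshold $c(\varepsilon)$ enter), then integrate using independence of $R$ and $V$ and the tail bound \eqref{eq:def_of_R}. The only differences are cosmetic (a factor-of-two normalization of the axes and working with $E_z$ directly rather than via the symmetry $P[0\in E_z]=P[z\in E_0]$).
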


\begin{proof}
Without loss of generality, we may assume $w$ is the origin and $z = (0, |z|)$. Thus, we have to prove
$\uc{const:ellipse_covers_small_ball}^{-1} |z|^{-(\alpha + 1)} \le P(B(\varepsilon) \subset E_z) \le \uc{const:ellipse_covers_small_ball} |z|^{-(\alpha + 1)}$.

By symmetry, we can state that $P[0 \in E_z] = P[z \in E_0]$, where $E_0$ is a random ellipse centered in the origin with major axis size and direction given by $R$ and $V$. Rotate the ellipse $E_0$ by the angle $V$ clockwise and denote this rotation by $\cR_V$. This rotation sends the ellipse $E_0$ to the ellipse $\cR_V(E_0)$. In particular, the major axis of $E_0$ is sent to the horizontal position and the minor axis, to the vertical one. Hence, the equation of $\cR_V(E_0)$ in the plane is given by $\{ (x,y) ; \left( x/R \right)^2 + y^2 = 1 \}$. Furthermore, the region covered by this ellipse is made of points with $\left( x/R \right)^2 + y^2 \le 1$. Then:
\begin{align}
P(z \in E_0)
 &= P( \cR_V(z) \in \cR_V(E_0)) = P\left( \left(\frac{|z| \cos V}{R}\right)^2 + (|z| \sin V)^2 \le 1 \right) \nonumber\\
 &= P\left( \left(\frac{ \cos V}{R}\right)^2 + \sin^2 V \le \frac{1}{\ |z|^2} \right) = P\left( \left(\frac{1}{R}\right)^2 + \sin^2 V \left[ 1 - \frac{1}{R^2} \right]\le \frac{1}{\ |z|^2} \right).
\label{eq:prob_z_in_E0_as_integral_in_R_and_v}
 \end{align}

\vspace{3mm}
\noindent\textbf{Upper Bound:} For the upper bound we notice that $P(B(\varepsilon) \subset E_z) \le P(0 \in E_z)$. We can assume $|z| > 2$, since we will choose $c(\varepsilon) \geq 2$. In order to estimate the probability in \eqref{eq:prob_z_in_E0_as_integral_in_R_and_v} we notice that
\begin{equation}
\left(\frac{1}{R}\right)^2 + \sin^2 V \left[ 1 - \frac{1}{R^2} \right]\le \frac{1}{\ |z|^2} \ \Longrightarrow \ \frac{1}{R^2} \le \frac{1}{\ |z|^2} \ \Longrightarrow \ R \geq |z|
\label{eq:lemma_A_point_upper_bound_1}
\end{equation}
and also, by the same reasoning and \eqref{eq:lemma_A_point_upper_bound_1} we have
\begin{equation}
\sin^2 V \le \frac{1}{\ |z|^2} \left[ 1 - \frac{1}{R^2}\right]^{-1} \le \frac{1}{\ |z|^2} \left[ 1 - \frac{1}{\ |z|^2}\right]^{-1} = \frac{1}{ |z|^2 - 1 }.
\label{eq:lemma_A_point_upper_bound_2}
\end{equation}

Equations \eqref{eq:lemma_A_point_upper_bound_1} and \eqref{eq:lemma_A_point_upper_bound_2} imply that the event whose probability we want to estimate is contained into the rectangular event $\{ R \geq |z|, \ |\sin V| \le (|z|^2 - 1)^{-1/2} \}$. Then, we can use the independence between $R$ and $V$ to deduce
\begin{align*}
P(0 \in E_z) 
  & \le \uc{const:R_decay} |z|^{-\alpha} \left[ \frac{2}{\pi} \arcsin\left( \frac{1}{(|z|^2 - 1)^{\frac{1}{2}}} \right) \right] \le \frac{4 \uc{const:R_decay} }{\pi} |z|^{-(\alpha + 1)} \frac{|z|}{\sqrt{|z|^2 - 1}} \le \uc{const:ellipse_covers_small_ball} |z|^{-(\alpha + 1)}
\end{align*}
where we have used that $\arcsin x \le 2 x$ for $x \in [0, 1]$ and $|z| [\, |z|^2 - 1\, ]^{-1/2} \le 2$ for $|z| > 2$ and defined the constant $\uc{const:ellipse_covers_small_ball}(\uc{const:R_decay}) = \tfrac{8 \uc{const:R_decay}}{\pi}$.

\vspace{3mm}
\noindent\textbf{Lower Bound:} For the lower bound, we notice that $P(B(\varepsilon) \subset E_z) = P(B(z,\varepsilon) \subset E_0)$, where $E_0$ is an ellipse centered on the origin with major axis' size and direction given by $R$ and $V$ respectively. Once again, we apply a clockwise rotation $\cR_V$ to get
\begin{equation*}
P(B(z,\varepsilon) \subset E_0) = P( \cR_V(z) + B(\varepsilon) \subset \cR_V(E_0)).
\end{equation*}

Now, notice that $\cR_V(z) = (|z|\cos V, - |z| \sin V)$ and for any $r = (r_1, r_2) \in B(\varepsilon)$ we have
\begin{equation*}
\{ \cR_V(z) + r \in \cR_V(E_0) \} = \left\{ \left(\tfrac{|z| \cos V + r_1}{R}\right)^2 + (|z|\sin V + r_2)^2 \le 1  \right\}.
\end{equation*} 

Using the inequality $(a + b)^2 \le 2(a^2 + b^2)$ twice, we can write
\begin{align*}
\left[\frac{|z| \cos V + r_1}{R}\right]^2 \!\! + (|z|\sin V + r_2)^2 
  &\le 2 \left(\frac{|z|^2 \cos^2 V + r^2_1}{R^2}\right) + 2(|z|^2\sin^2 V + r^2_2) \nonumber\\
  &\le 2 |z|^2 \left[ \frac{1}{R^2} + \left( 1 - \frac{1}{R^2} \right)\sin^2 V \right] + 2 \varepsilon^2 \left[ 1 + \frac{1}{R^2} \right].
\end{align*}

Finally, notice that if we make both terms of the last sum smaller than $1/2$ we guarantee the event 
$\{  \cR_V(z) + B(\varepsilon) \subset \cR_V(E_0) \}$ happens. It is easily checked that 
\begin{align*}
R \geq 2|z| , \ |V| \le \arcsin \left( \frac{1}{2|z|} \right) \ &\text{imply} \ 2 |z|^2 \left[ \frac{1}{R^2} + \left( 1 - \frac{1}{R^2} \right)\sin^2 V \right] \le \frac{1}{2}. \\
\varepsilon < 1/2, \  R \geq \frac{2\varepsilon}{\sqrt{1 - 4\varepsilon^2}} \ &\text{imply} \ 2 \varepsilon^2 \left[ 1 + \frac{1}{R^2} \right] \le \frac{1}{2}.
\end{align*}

Thus, it suffices to take $\varepsilon < 1/2$, $c(\varepsilon) = \frac{\varepsilon}{\sqrt{1 - 4\varepsilon^2}} \vee 2$ and notice
\begin{align*}
P\left[ R \geq 2|z|,\ |V| \le \arcsin \left( \tfrac{1}{2|z|} \right)\right] 
  &\geq \uc{const:R_decay}^{-1} (2|z|)^{-\alpha} \cdot 2\pi^{-1} \arcsin \left( \tfrac{1}{2|z|} \right) \geq \uc{const:ellipse_covers_small_ball}^{-1} |z|^{-(\alpha + 1)}
\end{align*}
for some constant $\uc{const:ellipse_covers_small_ball} = \uc{const:ellipse_covers_small_ball}(\uc{const:R_decay}, \alpha) > 0$ and $|z| \geq c(\varepsilon)$.
\end{proof}

Having Lemma \ref{lema:ellipse_covers_small_ball_step1}, we proceed in our two step strategy.

\begin{prop}
\label{prop:prob_small_ball_covered_by_ellipse_step2}
Let $w \in \RR^2$ and $0 \le \varepsilon < 1/2$. Then, $P[B(w,\varepsilon) \subset \cE] = 1$ if and only if $\alpha \le 1$.
\end{prop}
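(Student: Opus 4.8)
The plan is to convert the covering question into a Poisson counting problem through the thinning property and then read off the dichotomy from Lemma~\ref{lema:ellipse_covers_small_ball_step1}. Fix $w$ and $\varepsilon$ and set $g_\varepsilon(z) := P(B(w,\varepsilon) \subset E_z)$, with $E_z$ as in that lemma. Restricting the PPP $\xi$ on $S$ to the set $\{s : B(w,\varepsilon) \subset E(s)\}$ --- equivalently, applying Proposition~\ref{prop:thinning} to the point process of centers with retention probability $g_\varepsilon$ --- yields a Poisson process whose number of points $N_\varepsilon$ (the number of ellipses of the model that, on their own, contain $B(w,\varepsilon)$) is $\Poi(m_\varepsilon)$, where $m_\varepsilon := u\int_{\RR^2} g_\varepsilon(z)\,\mathrm dz$ and we use the convention $\Poi(\infty)\equiv\infty$. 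So everything comes down to deciding when $m_\varepsilon$ is finite.

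To estimate $m_\varepsilon$ I would split the integral at $|z-w| = c(\varepsilon)$. On $\{|z-w|\le c(\varepsilon)\}$ the integrand is at most $1$ and contributes the finite quantity $u\pi c(\varepsilon)^2$. On $\{|z-w| > c(\varepsilon)\}$, Lemma~\ref{lema:ellipse_covers_small_ball_step1} gives $g_\varepsilon(z) \asymp |z-w|^{-(\alpha+1)}$, so passing to polar coordinates around $w$,
\[
\int_{|z-w|>c(\varepsilon)} g_\varepsilon(z)\,\mathrm dz \;\asymp\; \int_{c(\varepsilon)}^{\infty} r^{-(\alpha+1)}\, r\,\mathrm dr \;=\; \int_{c(\varepsilon)}^{\infty} r^{-\alpha}\,\mathrm dr,
\]
which diverges precisely when $\alpha\le 1$. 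Hence $m_\varepsilon = \infty \iff \alpha \le 1$. In the case $\alpha\le 1$ this forces $N_\varepsilon = \infty$ almost surely (a Poisson process whose intensity measure has infinite total mass has infinitely many points a.s.), so some ellipse of the process already contains $B(w,\varepsilon)$ and therefore $B(w,\varepsilon)\subset\cE$ almost surely. This is exactly the Borel--Cantelli-flavoured argument, and it settles the ``if'' direction.

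The step I expect to need a small extra idea is the converse, $\alpha>1 \Rightarrow P[B(w,\varepsilon)\subset\cE]<1$: finiteness of $m_\varepsilon$ only says that finitely many ellipses \emph{individually} cover $B(w,\varepsilon)$, which does not by itself rule out $B(w,\varepsilon)$ being covered by a union of several ellipses none of which contains it alone. The way around this is to discard the ball and look only at its center: run the same argument with $\varepsilon$ replaced by $0$ (allowed in Lemma~\ref{lema:ellipse_covers_small_ball_step1}, where $c(0)=2$), so that the number $N_0$ of ellipses covering the single point $w$ is $\Poi(m_0)$ with $m_0 = u\int_{\RR^2} P(w\in E_z)\,\mathrm dz < \infty$ exactly because $\alpha+1>2$. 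Then $P[w\notin\cE] = P[N_0=0] = e^{-m_0} > 0$, and since $\{w\notin\cE\}\subseteq\{B(w,\varepsilon)\not\subseteq\cE\}$ we conclude $P[B(w,\varepsilon)\subset\cE]\le 1-e^{-m_0} < 1$, which completes the equivalence.
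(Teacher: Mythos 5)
Your proof is correct and follows essentially the same route as the paper's: thin the Poisson process by $g_\varepsilon(z)=P(B(w,\varepsilon)\subset E_z)$, decide the finiteness of $u\int g_\varepsilon$ via Lemma~\ref{lema:ellipse_covers_small_ball_step1} and polar coordinates, and for the converse reduce to the single point $w$ (the paper writes this as $P[B(\varepsilon)\subset\cE]\le P[0\in\cE]<1$). Your explicit remark that finiteness of $m_\varepsilon$ alone does not rule out covering by a union of ellipses is a worthwhile clarification of why the reduction to $\varepsilon=0$ is needed, but the underlying argument is the paper's.
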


Notice that the result in Proposition \ref{prop:prob_small_ball_covered_by_ellipse_step2} does not depend on $u$.

\begin{proof}
We begin assuming that $\varepsilon = 0$. By translation invariance, we may assume that $w$ is the origin. Define the function $g(z)= P[0 \in E_z]$. Then, Lemma~\ref{lema:ellipse_covers_small_ball_step1} provides the asymptotic behavior of $g(z)$ as $z \to \infty$: $g(z) \asymp |z|^{-(\alpha + 1)}$. We use Proposition \ref{prop:thinning} with this function. Notice that 
\begin{equation}
\begin{split}
P[0 \notin \cE] 
   &= P[0 \in \cV] = P[\omega_g(\RR^2) = 0] = \exp\left[ - u \int_{\RR^2} g(z) \, \text{d}z\right] = 0 \\
   &\text{if and only if}\quad\int_{\RR^2} g(z) \, \text{d}z = \infty.
\end{split}
\label{eq:equiv_conv_prop_prob_vacant_origin}
\end{equation}

Since $g(z) \in [0,1]$ for all $z$, the integral in \eqref{eq:equiv_conv_prop_prob_vacant_origin} is infinite iff $g(z)$ decays to zero sufficiently slow. Using the asymptotic expression for $g$ and integrating using polar coordinates
\begin{equation}
\int_{\RR^2} g(z) \, \text{d}z = \infty \
	\text{if and only if} \ \int_{c}^{\infty} r^{-\alpha} \, \text{d}r = \infty, \ 
	\text{if and only if} \ \alpha \le 1.
\end{equation}

Now, we handle the case $0 < \varepsilon < 1/2$. If $\alpha \le 1$, then the same argument above with the function $g(z) := P[B(\varepsilon) \subset E_z]$ shows that $P[B(\varepsilon) \subset \cE] = 1$. On the other hand, if $\alpha >1$ then $P[B(\varepsilon) \subset \cE] \le P[0 \in \cE] < 1$.
\end{proof}

\begin{remark}
Let $x, y \in \RR^2$ and $l(x,y)$ be the segment with endpoints in $x$ and $y$. Notice that $P[l(x,y) \subset \cE] \geq P[\exists z \in \supp \omega \cap B(x, 1/4); y \in E_z]$. Using Lemma \ref{lema:ellipse_covers_small_ball_step1} and the same thinning argument above we can see that $P[l(x,y) \subset \cE] \geq 1 - \exp[-u c |x - y|^{-\alpha}]$. Then, the ellipses model cannot be dominated by any Boolean model of fixed radius since for the Boolean model the probability of covering $l(x,y)$ decays exponentialy on $|x - y|$ (see Remark 3.2 of \cite{1010.5338}). 
\label{remark:not_dominated_by_Boolean}
\end{remark}

\subsection{Estimates for intersecting a ball}

In a completely analogous way we have just done, we can look into the case in which $\cE$ intersects a ball $B(w,a)$. 
\newconstant{const:step_1_for_A_ball}
\begin{lema}
\label{lema:prob_ellipse_intersects_ball_step1}
Let $C$ be any fixed constant with $C > 1$. Let $a > 0$ and $w, z \in \RR^2$ be points with $|z - w| \geq \max\{a+1, Ca\}$. Also, let $E_z$ be a random ellipse centered at $z$ and whose size and direction of the major axis have distribution $\rho \otimes \nu$, and let $\alpha$ be the decay parameter of $\rho$. Then, there exists $\uc{const:step_1_for_A_ball} = \uc{const:step_1_for_A_ball}(\uc{const:R_decay}, \alpha, C) > 0$ such that
\begin{equation*}
\uc{const:step_1_for_A_ball}^{-1} a |z - w|^{-(\alpha + 1)} \le P[E_z \cap B(w, a) \neq \varnothing] \le \uc{const:step_1_for_A_ball} (a + 1) |z - w|^{-(\alpha + 1)}.
\end{equation*}
\end{lema}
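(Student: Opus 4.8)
The plan is to mirror the two-step strategy of Lemma~\ref{lema:ellipse_covers_small_ball_step1}: first normalize the configuration using the rigid-motion invariance of the model (Remark~\ref{remark:invariances_of_ellipses_model}), then bound the probability that a single ellipse $E_z$ meets $B(w,a)$ in terms of the pair $(R,\theta)$, where $R$ is the major-axis length of $E_z$ and $\theta\in[0,\pi)$ is the undirected angle between the major axis of $E_z$ and the line through $w$ and $z$. By the choice of the uniform direction law, $\theta$ is uniform on $[0,\pi)$ and independent of $R$. After a rigid motion I would assume $w=0$ and write $\ell$ for the line through $z$ carrying the major axis of $E_z$, so that $\dist(0,\ell)=|z|\sin\theta$. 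As in the proof of Lemma~\ref{lema:ellipse_covers_small_ball_step1}, $E_z$ has semi-axes $R$ and $1$, and I will use three elementary facts: (a) $E_z\subseteq B(z,R)$; (b) $\dist(p,\ell)\le 1$ for every $p\in E_z$; and (c) $\ell\cap B(z,R)\subseteq E_z$.

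For the upper bound, if $p\in E_z\cap B(w,a)$ then, by (a), $R\ge|p-z|\ge|z|-a$, and, by (b), $|z|\sin\theta=\dist(0,\ell)\le|p|+\dist(p,\ell)\le a+1$, so $\sin\theta\le(a+1)/|z|\le 1$ (the last step uses $|z|\ge a+1$). Hence, by the independence of $R$ and $\theta$, by \eqref{eq:def_of_R}, and by $\arcsin x\le 2x$ on $[0,1]$,
\begin{equation*}
P[E_z\cap B(w,a)\neq\varnothing]\;\le\;P[R\ge|z|-a]\,P\!\left[\sin\theta\le\tfrac{a+1}{|z|}\right]\;\le\;\uc{const:R_decay}\,(|z|-a)^{-\alpha}\cdot\tfrac{4}{\pi}\cdot\tfrac{a+1}{|z|}.
\end{equation*}
Finally $|z|\ge Ca$ gives $|z|-a\ge\tfrac{C-1}{C}|z|$, so $(|z|-a)^{-\alpha}\le\bigl(\tfrac{C}{C-1}\bigr)^{\alpha}|z|^{-\alpha}$, which yields the claimed upper bound with a constant $\uc{const:step_1_for_A_ball}$ depending only on $\uc{const:R_decay}$, $\alpha$ and $C$.

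For the lower bound I would exhibit, on a suitable event, one point of $B(w,a)$ lying in $E_z$. Let $q$ be the foot of the perpendicular from $0$ to $\ell$; then $|q|=\dist(0,\ell)=|z|\sin\theta$ and, by Pythagoras, $|q-z|=\sqrt{|z|^{2}-|q|^{2}}\le|z|$. Consequently, on $\{R\ge|z|\}\cap\{|z|\sin\theta\le a\}$, fact (c) gives $q\in\ell\cap B(z,R)\subseteq E_z$, while $|q|\le a$ gives $q\in B(0,a)$, so this event forces $E_z\cap B(w,a)\neq\varnothing$. Using independence, \eqref{eq:def_of_R} and $\arcsin x\ge x$,
\begin{equation*}
P[E_z\cap B(w,a)\neq\varnothing]\;\ge\;P[R\ge|z|]\,P\!\left[\sin\theta\le\tfrac{a}{|z|}\right]\;\ge\;\uc{const:R_decay}^{-1}\,|z|^{-\alpha}\cdot\tfrac{2}{\pi}\cdot\tfrac{a}{|z|},
\end{equation*}
which is of the required order $a\,|z|^{-(\alpha+1)}$. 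I do not expect a serious obstacle: the content is elementary plane geometry combined with the independence of $R$ and $\theta$. The only delicate point is the regime of large $a$, where $B(w,a)$ is a big ball and a naively chosen test point would fall outside the thin ellipse; choosing $q$ on the major-axis line $\ell$ itself (where $E_z$ is ``full width'', cf.\ (c)) is exactly what makes the lower-bound construction uniform in $a$. The rest is bookkeeping --- tracking how the constants blow up as $C\downarrow 1$ through the factor $(C/(C-1))^{\alpha}$, and checking that the hypothesis $|z-w|\ge\max\{a+1,Ca\}$ is precisely what is needed for \eqref{eq:def_of_R} to be applicable at $|z|-a$ and for the $\arcsin$ estimates to stay in range.
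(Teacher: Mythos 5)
Your proposal is correct and follows essentially the same route as the paper: both bounds sandwich the intersection event between rectangular events in $(R,V)$ and then use the independence of $R$ and $V$, the tail bounds \eqref{eq:def_of_R}, and the elementary $\arcsin$ inequalities. The only differences are cosmetic --- you make the geometry explicit via the strip/segment facts and the foot of the perpendicular, and you use the slightly cruder sufficient condition $R\ge|z|$ in place of the paper's $R\ge\sqrt{|z|^2-a^2}$, which only affects the constant.
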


\begin{proof}
By applying a translation and a rotation we can suppose without loss of generality that $w$ is the origin and $z = \bigl( |z|, 0 \bigr)$. Thus, it is sufficient to prove there exists $\uc{const:step_1_for_A_ball}(\uc{const:R_decay}, \alpha, C) > 0$ such that 
\begin{equation*}
\uc{const:step_1_for_A_ball}^{-1} a |z|^{-(\alpha + 1)} \le P[E_z \cap B(a) \neq \varnothing] \le \uc{const:step_1_for_A_ball} (a + 1) |z|^{-(\alpha + 1)}.
\end{equation*}

\begin{figure}
\centering
\begin{tikzpicture}
 \clip (-1.3,-1.3) rectangle (8, 3.4);
 \draw[thick] (0,0) circle (1);
 \draw (-1.2,0) -- (8,0);
 
 \coordinate (z) at (7,0);
 \coordinate (w) at ({acos(2/7)}:2);
 \coordinate (unitary w) at   ({ acos(2/7)}:1);
 \coordinate (unitary z-w) at ({-asin(2/7)}:1);
 
 \filldraw[fill=gray!50,opacity=.4, rotate={180-asin(2/7)}] (z) circle (8.5 and 1);
 \fill (z) circle (.05) node[below] {$z$};
 
 \draw (0,0) -- (w) node[pos=.25,left] {$a$} node[pos=.75, left] {$1$};
 \draw (w)   -- (z);
 \draw[<->] ($ (z) - (1.8,0)$) arc (180:{180 - asin(2/7)}:1.8);
 \node at (4.8,.3) {$v_{a,z}$};
 
 \draw ($ (z)!-.2!(w) + (unitary w)$) -- ($ (z)!1.5!(w) + (unitary w)$);
 \draw ($ (z)!-.2!(w) - (unitary w)$) -- ($ (z)!1.5!(w) - (unitary w)$);
 
 \draw ($ (w) - .3*(unitary w)$) -- ++($ .3*(unitary z-w)$) -- ++($ .3*(unitary w)$);
\end{tikzpicture}
\caption{For $E_z \cap B(a) \neq \varnothing$, we need the restriction $|v(E_z)| \le v_{a,z}$.}
\end{figure}

\vspace{2mm}
\noindent\textbf{Upper Bound:}
In order to be possible for the ellipse $E_z$ to intersect $B(a)$, it is necessary that $|V(E_z)| = |V| \le \arcsin (\tfrac{a+1}{|z|}) =: V_{a,z}$. Besides that, we also need the size of the major axis to be greater than a minimum value; notice that, independently of $V$, the value of $R(E_z)$ must be greater than $\bigl( |z| - a \bigr)$. Then, using independence
\begin{align*}
P[E_z \cap B(a) \neq \varnothing]
  &\le P\bigl[ \ R(E_z) \geq |z|-a\; , \ |V(E_z)| \le V_{a,z} \ \bigr] \le \uc{const:R_decay} \bigl( |z|-a \bigr)^{-\alpha} \cdot \left( \frac{2}{\pi} V_{a,z} \right) \\
  &= \frac{2}{\pi}\uc{const:R_decay} \left( 1 - \frac{a}{|z|} \right)^{-\alpha} |z|^{-\alpha} V_{a,z} \le \frac{2}{\pi}\uc{const:R_decay} \left( 1 - \frac{1}{C} \right)^{-\alpha} |z|^{-\alpha} V_{a,z}. 
\end{align*}

Finally, we use that $\arcsin x \le 2x$ for $x \in [0,1]$ to obtain $V_{a,z} = \arcsin ( \frac{a+1}{|z|} ) \le 2 (a + 1) |z|^{-1}$, because $a \geq 1$. Joining the last two equations, we have proven the upper bound with a constant $\uc{const:step_1_for_A_ball}(\uc{const:R_decay}, \alpha, C)$.

\vspace{3mm}
\noindent\textbf{Lower Bound:} Define $\tilde{V}_{a,z} := \arcsin( a / |z| )$. We claim that the event $\{ \, |V| < \tilde{V}_{a,z},\ R \geq \sqrt{|z|^2 - a^2}\, \}$ is contained in the event $\{E_z \cap B(a) \neq \varnothing\}$. Indeed, if $|V| < \tilde{V}_{a,z}$ then the direction of the major axis of $E_z$ must intersect $B(a)$. Requiring also that $R \geq \sqrt{|z|^2 - a^2}$ ensures the intersection. We carry out the same calculations as in the upper bound case:
\begin{align*}
P[E_z \cap B(a) \neq \varnothing]
  &\geq  P\left[\ |V| < \tilde{V}_{a,z}, \ R \geq \sqrt{|z|^2 - a^2}\ \right] \stackrel{\text{(Indep.)}}{\geq}  \frac{2}{\pi} \tilde{V}_{a,z} \cdot \uc{const:R_decay}^{-1} \bigl( |z|^2 - a^2 \bigr)^{-\frac{\alpha}{2}} \\
  &\geq \frac{2}{\pi} \tilde{V}_{a,z} \uc{const:R_decay}^{-1} \left( 1 - \frac{1}{C^2} \right)^{-\frac{\alpha}{2}} |z|^{-\alpha}  =    \uc{const:step_1_for_A_ball}(\uc{const:R_decay}, \alpha, C)^{-1} \arcsin\left(\frac{a}{|z|}\right) |z|^{-\alpha}
\end{align*}
for a constant $\uc{const:step_1_for_A_ball}$ possibly greater than the one obtained in the upper bound. The lemma is proven, since $\arcsin (a/|z|) \geq a/|z|$.
\end{proof}

From Lemma \ref{lema:prob_ellipse_intersects_ball_step1} we can deduce the asymptotic behavior of $P[B(w,a) \cap \cE \neq \varnothing]$. Obviously, Proposition \ref{prop:prob_small_ball_covered_by_ellipse_step2} shows that when $\alpha \le 1$ this probability must be one, independently of $a$; so we can restrict ourselves to case where $\alpha > 1$.

\newconstant{const:lemma_ellipse_touches_ball_from_far}
\newconstant{const:new_ellipse_touches_ball}
\begin{prop}
\label{prop:new_ellipse_touches_ball}
Let $\rho$ have decay $\alpha > 1$ and fix some $a \geq 1$. It holds:
\begin{enumerate}[(i)]
\item Let $w \in \RR^2$ and $C \geq 2$ and define $g = g_{a,w,C}$ by $g(z) := P[ E_z \cap B(w, a) \neq \varnothing] \inds{B(w, Ca)^c}(z)$, which means that the thinning will keep only ellipses centered outside $B(w,Ca)$ that intersect $B(w,a)$. Then there is a positive constant $\uc{const:lemma_ellipse_touches_ball_from_far} = \uc{const:lemma_ellipse_touches_ball_from_far}(\uc{const:R_decay}, \alpha, C)$ such that 
\begin{equation*}
\exp[ - u \uc{const:lemma_ellipse_touches_ball_from_far}^{-1} a^{2 - \alpha}  ] \le
P[\omega_g(\RR^2) = 0] \le \exp[ - u \uc{const:lemma_ellipse_touches_ball_from_far} a^{2 - \alpha}  ]. 
\end{equation*}

\item There is a positive constant $\uc{const:new_ellipse_touches_ball} = \uc{const:new_ellipse_touches_ball}(\uc{const:R_decay}, \alpha)$ such that 
\begin{equation*}
1 - \exp[ - u \uc{const:new_ellipse_touches_ball}^{-1} a^{2}  ] \le P[B(w,a) \cap \cE \neq \varnothing]  \le 1 - \exp[ - u \uc{const:new_ellipse_touches_ball} a^2  ].
\end{equation*}
\end{enumerate}
\end{prop}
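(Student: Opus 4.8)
The plan is to prove part (i) first and then derive part (ii) from it. For part (i), I would apply Proposition~\ref{prop:thinning} to write
\[
P[\omega_g(\RR^2)=0]=\exp\Big[-u\int_{\RR^2}g(z)\,\mathrm{d}z\Big]=\exp\Big[-u\int_{B(w,Ca)^c}P[E_z\cap B(w,a)\neq\varnothing]\,\mathrm{d}z\Big],
\]
and by translation invariance take $w=0$. Since $a\ge 1$ and $C\ge 2$ one has $Ca\ge a+1$, so Lemma~\ref{lema:prob_ellipse_intersects_ball_step1} applies at every $z$ in the region of integration and gives $P[E_z\cap B(a)\neq\varnothing]\asymp a\,|z|^{-(\alpha+1)}$ (using $a+1\le 2a$ for the upper bound). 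Integrating in polar coordinates, $\int_{B(0,Ca)^c}a\,|z|^{-(\alpha+1)}\,\mathrm{d}z=2\pi a\int_{Ca}^{\infty}r^{-\alpha}\,\mathrm{d}r=\tfrac{2\pi C^{1-\alpha}}{\alpha-1}a^{2-\alpha}$, the convergence being exactly where $\alpha>1$ enters. Hence $u\int_{\RR^2}g\asymp u\,a^{2-\alpha}$ with constants depending only on $\uc{const:R_decay},\alpha,C$, and picking $\uc{const:lemma_ellipse_touches_ball_from_far}$ appropriately yields the two-sided bound in part (i).

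For part (ii), I would fix $C=2$ and split the ellipse process into the independent parts $\xi_{\mathrm{near}}$ and $\xi_{\mathrm{far}}$, consisting of the ellipses whose centre lies respectively inside or outside $B(w,2a)$, so that $\{B(w,a)\cap\cE=\varnothing\}$ is the intersection of the independent events $\{B(w,a)\cap\cE(\xi_{\mathrm{near}})=\varnothing\}$ and $\{B(w,a)\cap\cE(\xi_{\mathrm{far}})=\varnothing\}$. The second event is exactly $\{\omega_g(\RR^2)=0\}$ with the $g$ of part (i), so its probability is at least $\exp[-u\uc{const:lemma_ellipse_touches_ball_from_far}^{-1}a^{2-\alpha}]\ge\exp[-u\uc{const:lemma_ellipse_touches_ball_from_far}^{-1}a^{2}]$, the last inequality because $a\ge 1$ forces $a^{2-\alpha}\le a^{2}$. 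The first event contains $\{\xi_{\mathrm{near}}=\varnothing\}$, i.e.\ that $\omega$ has no point in $B(w,2a)$, which has probability $\exp[-4\pi u a^{2}]$. Multiplying, $P[B(w,a)\cap\cE=\varnothing]\ge\exp[-u(\uc{const:lemma_ellipse_touches_ball_from_far}^{-1}+4\pi)a^{2}]$, which is the upper bound in part (ii). For the lower bound in part (ii), observe that any ellipse centred in $B(w,a)$ already meets $B(w,a)$, so $P[B(w,a)\cap\cE=\varnothing]\le P[\omega(B(w,a))=0]=\exp[-\pi u a^{2}]$, giving $P[B(w,a)\cap\cE\neq\varnothing]\ge 1-\exp[-\pi u a^2]$. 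Setting $\uc{const:new_ellipse_touches_ball}:=\uc{const:lemma_ellipse_touches_ball_from_far}^{-1}+4\pi$ (which depends only on $\uc{const:R_decay},\alpha$ since $C=2$ is fixed, and is automatically $\ge 1/\pi$) then makes both inequalities in part (ii) hold, because enlarging $\uc{const:new_ellipse_touches_ball}$ only weakens each of them.

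I do not expect a genuine obstacle here; the only points needing care are checking the hypothesis $|z-w|\ge\max\{a+1,Ca\}$ of Lemma~\ref{lema:prob_ellipse_intersects_ball_step1} on $B(w,Ca)^c$ (where $C\ge 2$, $a\ge 1$ are used), the convergence of the radial integral (where $\alpha>1$ is used), and, in part (ii), the passage from the exponent $a^{2-\alpha}$ produced by part (i) to the exponent $a^{2}$ appearing in the statement, which is legitimate precisely because $a\ge 1$.
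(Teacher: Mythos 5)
Your proposal is correct and follows essentially the same route as the paper: part (i) via Proposition \ref{prop:thinning} combined with Lemma \ref{lema:prob_ellipse_intersects_ball_step1} (with the same checks that $Ca \ge a+1$ and that $\alpha>1$ makes the radial integral converge), and part (ii) via the independent near/far split at $B(w,2a)$ together with $a^{2-\alpha}\le a^2$ for $a\ge 1$. The bookkeeping of the constants, including the observation that the resulting $\uc{const:new_ellipse_touches_ball}$ serves both inequalities, is also as in the paper.
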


\begin{remark}
\label{remark:ellipse_touches_ball_from_far}
Proposition \ref{prop:new_ellipse_touches_ball}.\textit{(i)} is indeed quite useful. It allows us to disregard the influence of ellipses too far away from the region we are interested in. For instance, when $\alpha > 2$ and $a$ is sufficiently large, the probability of $\{\omega_g(\RR^2) = 0 \}$ is close to 1. This means that we can pay a small price for assuming that ellipses far away (centered on $B(Ca)^c$) do not interfere in what happens on the ball $B(a)$. The case in which $\alpha = 2$ is of special interest, as we will see. 
\end{remark}

\begin{proof}
Once again, we can assume without loss of generality that $w = 0$. Part \textit{(i)} is a straightforward application of Proposition \ref{prop:thinning} and Lemma \ref{lema:prob_ellipse_intersects_ball_step1}. The conditions $a \geq 1$ and $C \geq 2$ are just simple requirements to force $Ca \geq a + 1$, so that $g(z) \asymp |z|^{-(\alpha + 1)}$ for $|z| > Ca$.

For part $\textit{(ii)}$, notice that if we take $g$ as in part \textit{(i)} with $C = 2$ then
\begin{equation*}
P[\omega(B(2a)) = 0, \omega_g(B(2a)^c) = 0] \le P[B(a) \subset \cV] \le P[\omega(B(a)) = 0].
\end{equation*}

Since the random variables $\omega(B(2a))$ and $\omega_g(B(2a)^c)$ are independent and $P[\omega(B(2a)) = 0] = \exp[- u \pi 4 a^2]$, we conclude from part \textit{(i)} that 
\begin{equation}
\exp[ - u 4\pi a^2 - u \uc{const:lemma_ellipse_touches_ball_from_far}^{-1} a^{2 - \alpha}  ] \le
P[B(a) \subset \cV] \le \exp[ - u \pi a^{2}  ].
\end{equation}
The result follows after we notice that $a^2 \geq a^{2 - \alpha}$ for $a \geq 1$ and define $\uc{const:new_ellipse_touches_ball}(\uc{const:R_decay}, \alpha)$ appropriately.
\end{proof}

\section{Phase Transition for Total Covering}
\label{section:phase_transition_for_total_covering}

The proof of Theorems \ref{teo:alpha_phase_transition} and \ref{teo:covered_set_alpha_phase_transition} is split into two sections. In this section, we answer the question: are there values of $\alpha$ and $u$ for which the vacant set $\cV$ is empty almost surely? In other words, such that the plane is completely covered by ellipses, with probability one? Observe that as the value of $\alpha$ decreases, ellipses with very large axes will become more frequent. As a consequence, the region covered by the ellipses tends to be greater.

Proposition \ref{prop:prob_small_ball_covered_by_ellipse_step2} is sufficient to provide an answer of how total covering depends on $\alpha$.

\begin{prop}
\label{prop:total_covering}
We have $P[\cE = \RR^2] = 1$ if and only if $\alpha \le 1$.
\end{prop}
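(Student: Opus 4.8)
The plan is to deduce Proposition~\ref{prop:total_covering} from Proposition~\ref{prop:prob_small_ball_covered_by_ellipse_step2} together with a countable-covering / Fubini argument, handling the two directions of the ``if and only if'' separately.

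First, suppose $\alpha \le 1$. Proposition~\ref{prop:prob_small_ball_covered_by_ellipse_step2} gives $P[\{w\} \subset \cE] = 1$ for each fixed $w \in \RR^2$; equivalently $P[w \in \cV] = 0$. The issue is that $\{\cE = \RR^2\}$ is an uncountable intersection of such events, so I cannot simply take a union bound over all $w$. The standard fix is Fubini: consider $\EE[\lambda(\cV)] = \EE\bigl[\int_{\RR^2} \ind{w \in \cV}\,\mathrm{d}w\bigr] = \int_{\RR^2} P[w \in \cV]\,\mathrm{d}w = 0$, using that $(w,\xi)\mapsto \ind{w\in\cV(\xi)}$ is jointly measurable. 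Hence $\lambda(\cV) = 0$ almost surely. To upgrade ``$\cV$ has zero Lebesgue measure'' to ``$\cV = \varnothing$'', I use the stronger ball statement: for fixed $\varepsilon \in (0,1/2)$, Proposition~\ref{prop:prob_small_ball_covered_by_ellipse_step2} gives $P[B(w,\varepsilon)\subset\cE]=1$ for every $w$. Now cover $\RR^2$ by the countable family of balls $\{B(q,\varepsilon) : q \in \varepsilon\ZZ^2\cdot\tfrac12\}$ (any countable collection of $\varepsilon$-balls whose union is all of $\RR^2$); almost surely each of these countably many balls is contained in $\cE$, and therefore $\cE = \RR^2$ almost surely. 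This direction is clean; the countable-covering trick using balls rather than points is the crucial point and is exactly what Proposition~\ref{prop:prob_small_ball_covered_by_ellipse_step2} was designed to supply.

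Conversely, suppose $\alpha > 1$. Then Proposition~\ref{prop:prob_small_ball_covered_by_ellipse_step2} (applied with $\varepsilon = 0$, i.e. to the singleton $\{0\}$) gives $P[0 \in \cE] < 1$, so $P[0 \in \cV] > 0$, and in particular $P[\cV = \varnothing] \le P[0 \in \cE] < 1$. Since $\{\cV = \varnothing\} = \{\cE = \RR^2\}$, this shows $P[\cE = \RR^2] < 1$. To conclude $P[\cE=\RR^2]=0$ (not merely $<1$) one can invoke ergodicity of the translations for the ellipses model — translating the configuration does not change the event $\{\cE=\RR^2\}$, which therefore has probability $0$ or $1$, and we have just ruled out $1$. (The ergodicity of translations is established later in Section~\ref{section:decay_of_correlations_with_distance}; alternatively, a direct zero-one argument via independence over disjoint regions works, but citing ergodicity is cleanest.)

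I do not anticipate a genuine obstacle here: the proposition is essentially a packaging of Proposition~\ref{prop:prob_small_ball_covered_by_ellipse_step2}. The only subtlety worth stating carefully is the passage from ``every fixed point/ball is covered a.s.'' to ``the whole plane is covered a.s.'', which must go through a countable sub-collection of balls (not points) — and that is precisely why the $\varepsilon>0$ version of the covering statement is needed rather than just the $\varepsilon=0$ version.
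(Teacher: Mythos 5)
Your proof is correct and follows essentially the same route as the paper: for $\alpha \le 1$ the paper likewise covers $\RR^2$ by the countable family of balls $B(w,1/4)$, $w \in \QQ^2$, each covered almost surely by Proposition~\ref{prop:prob_small_ball_covered_by_ellipse_step2}, and for $\alpha > 1$ it likewise observes $P[0\in\cE]<1$. Your Fubini detour and the concluding ergodicity remark are harmless but unnecessary, since the statement only requires $P[\cE=\RR^2]\neq 1$.
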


\begin{proof}[Proof of Proposition \ref{prop:total_covering}]
Suppose $\alpha \le 1$. It follows from Proposition \ref{prop:prob_small_ball_covered_by_ellipse_step2} that $P(B(w, 1/4) \subset \cE, \ \forall w \in \QQ^2) =1$. Since this event is the same as $\{\cE = \RR^2\}$ we have proven one of the implications. To see the other, it suffices to see that when $\alpha > 1$ Proposition \ref{prop:prob_small_ball_covered_by_ellipse_step2} says that $P( 0 \in \cE ) < 1$.
\end{proof}

\subsection{Infinite Area Argument}
\label{subsection:infinite_area_argument}

Recall that $\lambda$ is the Lebesgue measure on $\RR^2$. Noticing that $\lambda(E_0) = \pi R(E_0)$ has infinite expected value if and only if $ \alpha \le 1$, Proposition \ref{prop:total_covering} can be restated in the suggestive form:

\begin{coro}
\label{coro:total_covering_infinite_area}
We have $P[\cE = \RR^2] = 1$ if and only if $E[\lambda(E_0)] = \infty$.
\end{coro}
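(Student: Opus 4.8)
The plan is to derive Corollary \ref{coro:total_covering_infinite_area} directly from Proposition \ref{prop:total_covering} by computing the expected Lebesgue area of a single typical ellipse. Recall that an ellipse with major axis of length $R$ and minor axis of length $1$ has area $\pi \cdot \tfrac{R}{2} \cdot \tfrac{1}{2} = \tfrac{\pi}{4} R$; in any case the area is a fixed positive constant times $R(E_0)$, so $\EE[\lambda(E_0)] < \infty$ if and only if $\EE[R] < \infty$. Since the direction $V$ is independent of $R$ and plays no role in the area, the only thing to check is the finiteness of the first moment of $R$.

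The key step is to translate the tail bound \eqref{eq:def_of_R}, namely $\uc{const:R_decay}^{-1} r^{-\alpha} \le \rho[r,\infty) \le \uc{const:R_decay} r^{-\alpha}$ for $r \ge 1$, into a statement about $\EE[R]$. Using the layer-cake formula $\EE[R] = \int_0^\infty \rho[r,\infty)\,\mathrm{d}r$, split the integral at $r = 1$; the part over $[0,1]$ is bounded by $1$, while the part over $[1,\infty)$ is comparable (both above and below) to $\int_1^\infty r^{-\alpha}\,\mathrm{d}r$. This last integral is finite if and only if $\alpha > 1$ and infinite if and only if $\alpha \le 1$. Hence $\EE[\lambda(E_0)] = \infty$ precisely when $\alpha \le 1$.

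Combining this equivalence with Proposition \ref{prop:total_covering}, which states that $P[\cE = \RR^2] = 1$ if and only if $\alpha \le 1$, immediately yields the claimed equivalence $P[\cE = \RR^2] = 1 \iff \EE[\lambda(E_0)] = \infty$. There is no real obstacle here: the entire content is the routine layer-cake computation and the observation, already supplied in the sentence preceding the corollary, that $\lambda(E_0)$ is proportional to $R(E_0)$. The only point worth stating carefully is that the two-sided nature of the tail bound in \eqref{eq:def_of_R} is what makes the equivalence (as opposed to just one implication) hold: the lower bound $\rho[r,\infty) \ge \uc{const:R_decay}^{-1} r^{-\alpha}$ forces $\EE[R] = \infty$ when $\alpha \le 1$, and the upper bound forces $\EE[R] < \infty$ when $\alpha > 1$.
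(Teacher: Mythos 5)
Your proposal is correct and is essentially the paper's own derivation: the corollary is presented there as a direct restatement of Proposition \ref{prop:total_covering}, justified by the observation that $\lambda(E_0)$ is a constant multiple of $R(E_0)$ and that the two-sided tail bound \eqref{eq:def_of_R} gives $\EE[R]=\infty$ exactly when $\alpha\le 1$, which is the layer-cake computation you spell out. (The paper additionally gives a second, independent proof via Hall's infinite-area criterion, but that is an alternative route, not the one needed to establish the corollary from the proposition.)
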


Corollary \ref{coro:total_covering_infinite_area} says the probability of total covering is related to the expected value of the area of the random subsets we are working with. This fact is not a coincidence and can be used to extend the proof of total covering to a more general setting. Consider a model of random subsets made by taking $\omega$ a PPP($u\lambda$) on $\RR^2$ and associating to every $z \in \supp \omega$ a random closed subset $E_z \subset \RR^2$ independently of everything else (see \cite{hall1985continuum}). We take 
\begin{equation*}
\cE = \bigcup_{z \in \supp \omega} \hspace{-2mm} E_z.
\end{equation*}

In \cite{hall1985continuum}, it is proven that for any bounded measurable set $A \subset \RR^2$ we have $P[\lambda(A \backslash \cE) = 0] = 1$ if and only if $E[\lambda(E_0)] = \infty$, which implies that $\lambda(\cV) = 0$ almost surely. In general, this does not mean that $\cV = \varnothing$ a.s.. However, we can use this fact to prove total covering for any ellipses model.

\begin{proof}[Proof of Corollary \ref{coro:total_covering_infinite_area} for ellipses model]
Fix any ellipses model and choose $\varepsilon < 1/2$. Denote by $l(E_0)$ the perimeter of ellipse $E_0$ and notice that if an ellipse has axes of size $a$ and $b$ then its perimeter $p$ satisfies $p \le \pi \sqrt{2(a^2 + b^2)}$ (see eg. \cite{Klamkin1971}). In our case,
\begin{equation}
l(E_0) \le \pi \sqrt{2(1 + R^2)} \le 2\pi R \le 2\lambda(E_0).
\label{eq:relation_area_perimeter_ellipse}
\end{equation}

For a set $K \subset \RR^2$, denote by $K^{-\varepsilon}$ the $\varepsilon$-interior of the set $K$, that is $K^{-\varepsilon} = \{x; B(x,\varepsilon) \subset K\}$. Consider the model where the random subsets are given by the family $(E_z^{-\varepsilon})_{z \in \supp \omega}$ and denote its vacant set by $\tilde{\cV}$. Notice that the models we are considering are supported on bounded convex subsets. If $K$ is a bounded convex subset, as a particular case of Steiner-Minkowski formula (see eg. \cite{berger1987geometry}) we have $\lambda(K + B(r)) = \lambda(K) + \lambda(B(r)) + r \, l(K)$. Applying to $K = E_0^{-\varepsilon}$ and $r = \varepsilon$, we obtain:
\begin{align}
\lambda(E_0) 
  &= \lambda(E_0^{-\varepsilon} + B(\varepsilon)) = \lambda(E_0^{-\varepsilon}) + \lambda(B(\varepsilon)) + \varepsilon \, l(E_0^{-\varepsilon}) \nonumber\\
  &\le \lambda(E_0^{-\varepsilon}) + \lambda(B(\varepsilon)) + \varepsilon \, l(E_0) \le \lambda(E_0^{-\varepsilon}) + \lambda(B(\varepsilon)) + \varepsilon 2 \, \lambda(E_0)
\end{align}
where the first inequality comes from the fact that if $K_1 \subset K_2$ are two bounded convex subsets of $\RR^2$ then $l(K_1) \le l(K_2)$ and the second comes from \eqref{eq:relation_area_perimeter_ellipse}. Then, it follows $(1 - 2\varepsilon)\lambda(E_0) - \lambda(B(\varepsilon)) \le \lambda(E_0^{-\varepsilon}) \le \lambda(E_0)$ and we conclude $E[\lambda(E_0)] = \infty$ if and only if $E[\lambda(E_0^{-\varepsilon})] = \infty$. By this, we have $\lambda(\tilde{\cV}) = 0$ a.s. and thus $\cV = \varnothing$.
\end{proof}

\begin{remark}
The proof above can be immediately generalized for any model supported on bounded convex sets of $\RR^2$ satisfying for some universal constant $C > 0$ the relation $l(E_0) \le C \lambda(E_0)$. Although there are many papers on the total covering of sets, especially in relation to Dvoretsky's covering problem, we did not find any result that would apply to ellipses model directly (see eg. Kahane \cite{kahane2000random}).
\end{remark}

\subsection{Quantitative Estimates}

Section \ref{subsection:infinite_area_argument} has a proof of the phase transition for total covering in the ellipses model that does not need any of the estimates of Section \ref{section:probability_of_simple_events}. This fact might put into question whether those estimates are useful at all. Actually, such estimates will have greater importance in the subsequent sections.

In this section we emphasize that the bounds from Section \ref{section:probability_of_simple_events} provide a more precise description of the ellipses model. To exemplify that, we will prove a stronger result about the covering of a small ball by ellipses, generalizing Proposition \ref{prop:prob_small_ball_covered_by_ellipse_step2}.

For $\varepsilon > 0$, consider the random variables
\begin{equation*}
N^{(\varepsilon)}_n := \#\{s \in \supp \xi;\ B(\varepsilon) \subset E(s), \ c(E(s)) \in B(n)\},
\end{equation*}
the number of ellipses centered on the euclidean ball $B(n)$ that cover the ball $B(\varepsilon)$. For a fixed $\varepsilon < 1/2$, we prove a law of large numbers for $N^{(\varepsilon)}_n$.

\begin{prop}
Let $\varepsilon < 1/2$. We have that:

\begin{enumerate}[1)]
\item For $0 < \alpha < 1$, it holds $E[N^{(\varepsilon)}_n] \asymp n^{1-\alpha}$ and $\frac{N^{(\varepsilon)}_n - E[N^{(\varepsilon)}_n]}{\sqrt{n^{1-\alpha} (\log n)^{1 + \delta}}} \to 0$ as. when $n \to \infty$, for fixed $\delta > 0$.

\item For $\alpha = 1$, it holds $E[N^{(\varepsilon)}_n] \asymp \log n$ and $\frac{N^{(\varepsilon)}_n - E[N^{(\varepsilon)}_n]}{n^{1/2} (\log n)^{1 + \delta}} \to 0$ as. when $n \to \infty$, for fixed $\delta > 0$.
\end{enumerate}
\end{prop}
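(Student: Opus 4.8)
The plan is to compute the mean via the thinning proposition and estimate the variance so that an easy Borel–Cantelli / subsequence argument delivers the almost-sure convergence. First I would apply Proposition~\ref{prop:thinning} with $g(z) = P[B(\varepsilon) \subset E_z]$ to write $N^{(\varepsilon)}_n$ as a Poisson random variable with parameter $\Lambda_n := u\int_{B(n)} g(z)\,\mathrm{d}z$; in particular $E[N^{(\varepsilon)}_n] = \Var(N^{(\varepsilon)}_n) = \Lambda_n$. By Lemma~\ref{lema:ellipse_covers_small_ball_step1} there is a constant $c(\varepsilon)$ such that $g(z) \asymp |z|^{-(\alpha+1)}$ for $|z| > c(\varepsilon)$, while on the bounded annulus $c(\varepsilon) \le |z| \le$ (anything) $g$ is bounded by $1$ and contributes only an $O(1)$ term. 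Integrating in polar coordinates, $\int_{B(n)} g \asymp \int_{c(\varepsilon)}^{n} r \cdot r^{-(\alpha+1)}\,\mathrm{d}r = \int_{c(\varepsilon)}^n r^{-\alpha}\,\mathrm{d}r$, which is $\asymp n^{1-\alpha}$ when $\alpha \in (0,1)$ and $\asymp \log n$ when $\alpha = 1$. This gives the stated asymptotics for $E[N^{(\varepsilon)}_n]$ in both cases.

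For the fluctuation statement, the idea is to use Chebyshev along a suitable deterministic subsequence and then control the gaps by monotonicity, since $n \mapsto N^{(\varepsilon)}_n$ is nondecreasing. Write $a_n$ for the normalizing sequence ($\sqrt{n^{1-\alpha}(\log n)^{1+\delta}}$ in case~1, $n^{1/2}(\log n)^{1+\delta}$ in case~2) and set $Z_n := (N^{(\varepsilon)}_n - E[N^{(\varepsilon)}_n])/a_n$. Since $\Var(Z_n) = \Lambda_n / a_n^2 \asymp n^{1-\alpha}/a_n^2 = 1/(\log n)^{1+\delta}$ in case~1 (and $\asymp \log n / (n (\log n)^{1+\delta}) = 1/(n(\log n)^\delta)$ in case~2), Chebyshev gives $P[|Z_n| > \eta] \le C/(\eta^2 (\log n)^{1+\delta})$ in case~1. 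This is not summable in $n$, but it is summable along $n_k = \lfloor e^{k^{2/(1+\delta')}} \rfloor$ for a slightly smaller $\delta' < \delta$ — or more simply along any subsequence with $\log n_k \asymp k^{1/(1+\delta/2)}$, along which the Borel–Cantelli lemma yields $Z_{n_k} \to 0$ a.s. To pass from the subsequence to the full sequence, for $n_k \le n < n_{k+1}$ use $E[N^{(\varepsilon)}_{n_k}] \le N^{(\varepsilon)}_n \le N^{(\varepsilon)}_{n_{k+1}}$ together with $E[N^{(\varepsilon)}_{n_{k+1}}]/E[N^{(\varepsilon)}_{n_k}] \to 1$ (true since consecutive terms of the subsequence have ratio $\to 1$ after the $n^{1-\alpha}$ or $\log n$ scaling) and $a_{n_{k+1}}/a_{n_k} \to 1$, sandwiching $Z_n$ between two quantities both tending to $0$. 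Case~2 is handled the same way, in fact more easily since $1/(n(\log n)^\delta)$ is already summable over all $n$ and no subsequence trick is needed.

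The main obstacle is purely bookkeeping: choosing the subsequence $(n_k)$ in case~1 so that simultaneously (a) $\sum_k P[|Z_{n_k}|>\eta] < \infty$, (b) $n_{k+1}/n_k \to 1$ fast enough that the monotone interpolation between subsequence terms loses nothing after normalization, and (c) $a_{n_{k+1}}/a_{n_k}\to 1$. A geometric-type subsequence with slowly growing ratio, e.g. $n_k = \lceil k^{\beta}\rceil$ for $\beta$ large, makes (b) and (c) transparent and, since $\log n_k \asymp \log k$, makes (a) reduce to $\sum_k 1/(\log k)^{1+\delta}$ — which still diverges, so one must instead take $n_k$ growing slightly faster than polynomially, e.g. $n_k = \lceil \exp(k^{\theta})\rceil$ with $\theta(1+\delta) > 1$; then $\log n_k = k^\theta$, $\sum_k 1/(\log n_k)^{1+\delta} = \sum_k k^{-\theta(1+\delta)} < \infty$, while $n_{k+1}/n_k = \exp((k+1)^\theta - k^\theta) \to 1$ as long as $\theta < 1$, so any $\theta \in (1/(1+\delta),1)$ works. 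Everything else — the Poisson mean/variance identity, the polar-coordinate integral, and the Chebyshev bound — is routine given Lemma~\ref{lema:ellipse_covers_small_ball_step1} and Proposition~\ref{prop:thinning}.
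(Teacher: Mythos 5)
Your computation of the mean is exactly the paper's: thinning with $g(z)=P[B(\varepsilon)\subset E_z]$, the asymptotics $g(z)\asymp|z|^{-(\alpha+1)}$ from Lemma \ref{lema:ellipse_covers_small_ball_step1}, and a polar integral give $E[N^{(\varepsilon)}_n]\asymp n^{1-\alpha}$ (resp. $\log n$). The case $\alpha=1$ of the fluctuation statement also goes through as you describe, since there the Chebyshev bound is already summable over all $n$ and no subsequence is needed. The gap is in case $0<\alpha<1$: the ``Chebyshev along a subsequence plus monotone interpolation'' scheme cannot be made to work with only a second-moment bound. The issue is requirement (b), which you state only as $E[N^{(\varepsilon)}_{n_{k+1}}]/E[N^{(\varepsilon)}_{n_k}]\to1$; what the sandwich actually needs is the much stronger condition $\bigl(E[N^{(\varepsilon)}_{n_{k+1}}]-E[N^{(\varepsilon)}_{n_k}]\bigr)/a_{n_k}\to0$, because after subtracting $E[N^{(\varepsilon)}_n]$ and dividing by $a_n$ the deterministic drift between consecutive subsequence points reappears divided by $a_{n_k}\asymp\sqrt{\Lambda_{n_k}}(\log n_k)^{(1+\delta)/2}$, which is only of the order of the \emph{square root} of the mean. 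With your choice $n_k=\lceil\exp(k^{\theta})\rceil$ one gets
\begin{equation*}
\frac{E[N^{(\varepsilon)}_{n_{k+1}}]-E[N^{(\varepsilon)}_{n_k}]}{a_{n_k}}
\;\asymp\; \frac{n_k^{1-\alpha}\,k^{\theta-1}}{n_k^{(1-\alpha)/2}\,k^{\theta(1+\delta)/2}}
\;=\; n_k^{(1-\alpha)/2}\,k^{\theta-1-\theta(1+\delta)/2}\;\longrightarrow\;\infty ,
\end{equation*}
so the sandwich gives no control at all. Conversely, forcing this drift to be $o(a_{n_k})$ forces $n_k$ to grow at most polynomially in $k$, whence $\log n_k\asymp\log k$ and $\sum_k(\log n_k)^{-(1+\delta)}=\infty$, killing Borel--Cantelli. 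The two requirements are mutually exclusive: this is precisely why a second-moment subsequence argument proves $S_n/n\to\mu$ but not refinements where the normalization is of the order of the standard deviation.

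Two repairs are available. The paper's route avoids the interpolation entirely: it writes $N^{(\varepsilon)}_n=\sum_{j\le n}X^{(\varepsilon)}_j$ with $X^{(\varepsilon)}_j:=N^{(\varepsilon)}_j-N^{(\varepsilon)}_{j-1}$ \emph{independent} Poisson variables (restriction of a PPP to disjoint annuli), and applies Kolmogorov's strong law: $\sum_n \Var(X^{(\varepsilon)}_n)/b_n^2\asymp\sum_n n^{-\alpha}/\bigl(n^{1-\alpha}(\log n)^{1+\delta}\bigr)=\sum_n 1/\bigl(n(\log n)^{1+\delta}\bigr)<\infty$ suffices. Alternatively, you can keep your structure but replace Chebyshev by a Poisson Chernoff bound, $P[|N^{(\varepsilon)}_n-\Lambda_n|>\eta a_n]\le 2\exp[-c\eta^2 a_n^2/\Lambda_n]=2\exp[-c\eta^2(\log n)^{1+\delta}]$, which decays faster than any power of $n$ and is therefore summable over the full sequence, so no subsequence or interpolation is needed. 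Either fix is short; as written, though, case 1 of your argument does not close. (Also note the typo $E[N^{(\varepsilon)}_{n_k}]\le N^{(\varepsilon)}_n$, which should read $N^{(\varepsilon)}_{n_k}\le N^{(\varepsilon)}_n$, and that in case 2 one has $a_n^2=n(\log n)^{2(1+\delta)}$, not $n(\log n)^{1+\delta}$ --- harmless for summability.)
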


\begin{proof}
We omit the details. Notice that the random variables $N^{(\varepsilon)}_n$ have distribution 
\begin{equation*}
N^{(\varepsilon)}_n \distr \Poi\left(u \int_{B(n)} \hspace{-4mm}P[B(\varepsilon) \subset E_z] \, \text{d}z\right).
\end{equation*}

Then, the asymptotic estimates for $E[N^{(\varepsilon)}_n]$ follow from Lemma \ref{lema:ellipse_covers_small_ball_step1}. Define the random variables $X_n^{(\varepsilon)} := N^{(\varepsilon)}_n - N^{(\varepsilon)}_{n-1}$, which are independent Poisson random variables with 
\begin{equation*}
X_n^{(\varepsilon)} \distr \Poi\left( u \int_{B(n)\backslash B(n-1)} \hspace{-15mm}P[B(\varepsilon) \subset E_z] \, \text{d}z \right).
\end{equation*}

In order to prove a strong law of large numbers, we resort to a theorem of Kolmogorov (see \cite{shiryaev1996probability}, Theorem 2 on page 389). Applied to $(X_n^{(\varepsilon)})$, it states that for any sequence of numbers $(b_n) \subset \RR^+$ with $b_n \uparrow \infty$ and $\sum \frac{\Var X^{(\varepsilon)}_n}{b_n^2} < \infty$ we have 
\begin{equation*}
\frac{N^{(\varepsilon)}_n - E[N^{(\varepsilon)}_n]}{b_n} = \frac{\sum_{j=1}^{n} X^{(\varepsilon)}_j - \sum_{j=1}^{n} E[X^{(\varepsilon)}_j]}{b_n} \to 0 \ \text{as. when $n \to \infty$.}
\end{equation*}

To finish the proof, we notice that for Poisson random variables the expectation and the variance coincide. The sequences $b_n$ were chosen to use the fact that $\sum_n \tfrac{1}{n (\log n)^{q}} < \infty$ if and only if $q > 1$.
\end{proof}

\section{Phase Transition for Existence of Critical Point}
\label{section:phase_transition_for_existence_of_critical_point}

Let us define two critical values for $u$ in the $(u, \rho)$-ellipses model:

\begin{defi}
Define the critical values
\begin{equation*}
\bar{u}_c(\rho) := \inf \{ u \geq 0 ; \ P_{u, \rho}[\cV \ \text{percolates}] = 0 \} \ \text{and} \ u_c(\rho) := \inf \{ u \geq 0 ; \ P_{u, \rho}[\cE \ \text{percolates}] = 1 \}.
\end{equation*}
\end{defi}

\begin{remark}
\label{remark:properties_of_uc_and_bar_uc}
We make some comments about how $u_c(\rho)$ and $\bar{u}_c(\rho)$ are related:
\begin{enumerate}
\item Recall that we assumed $\rho$ is supported on $[1, \infty)$. By this, our model trivially dominates Poisson Boolean percolation with circles of radius 1 for any $\rho$. Using this fact, it is easy to prove percolation for the covered set $\cE$ when $u$ is sufficiently large. Thus, there exists a finite constant $C$ such that $u_c(\rho) \le C$ and $\bar{u}_c(\rho) \le C$ for all $\rho$ we are considering. Moreover, notice that $u_c(\rho)$ may assume different values even for $\rho$'s with the same decay $\alpha$; the same goes for $\bar{u}_c(\rho)$.

\item One could try to adapt the classical proof of uniqueness of the infinite cluster in the supercritical phase to the covered set, together with Zhang's argument to conclude that in any ellipses model infinite vacant and covered clusters cannot coexist. We believe this holds, but did not carry out the computations. If true, this would imply $\bar{u}_c(\rho) \le u_c(\rho)$.

\item Notice that the critical values do not need to be equal, since we prove with Theorem\ref{teo:vacant_crossing_of_boxes_alpha_2} that $\bar{u}_c(\rho) = 0 < u_c(\rho)$ when $\alpha = 2$. However, as we stated in Conjecture \ref{conj:equality_critical_points_alpha_greater_2}, when $\alpha > 2$ we believe equality actually holds.
\end{enumerate}
\end{remark}

In the previous section we already proved that $\cV \neq \varnothing$ for $\alpha > 1$. Now we deal with the second phase transition. In this section we finish the proof of Theorems \ref{teo:alpha_phase_transition} and \ref{teo:covered_set_alpha_phase_transition}.

\subsection{Crossing a box with one ellipse}
\label{subsection:crossing_box_with_one_ellipse}

Let us estimate the probability of the event that a single ellipse manages to connect opposite sides of a fixed box. This subsection could be at Section \ref{section:probability_of_simple_events}, but we chose to put it closer to where it is needed. Proposition \ref{prop:crossing_box_with_one_ellipse} below will be useful for proving Theorem \ref{teo:vacant_crossing_of_boxes_alpha_2} also.

Recall our notation for boxes $B_{\infty}(l;k)$ and its sides $L^{-}(l;k)$ and $L^{+}(l;k)$. Also, recall that for any ellipse $E$ we defined $c(E)$ as its center, $R(E)$ as the size of its major axis and $V(E)$ as the direction of its major axis.

\begin{defi}
Define the events
$$
 LR(l;k) := \left\{ \exists \gamma: [0,1] \to \RR^2; 
     \begin{array}{l}
          \text{$\gamma$ is continuous,} \ \gamma([0,1]) \subset \cE \cap B_\infty(l;k), \\
          \gamma(0) \in L^-(l;k) \ \text{and} \ \gamma(1) \in L^+(l;k)
     \end{array} 
 \right\}
$$

$$
 LR_1(l;k) := \left\{ \exists s \in \supp \xi ; 
     \begin{array}{l}
          E(s) \cap L^{-}(l;k) \neq \varnothing \ \text{\upshape  and } \\
          E(s) \cap L^{+}(l;k) \neq \varnothing
     \end{array}
 \right\}
$$
\end{defi}

In words, $LR(l;k)$ denotes the event in which there is a left-right crossing of $B_\infty(l;k)$ contained on $\cE$ and $LR_1(l;k)$ is the event in which such a crossing is obtained by one ellipse alone. The subscript $1$ in the above notation is to emphasize this. Obviously, $LR_1(l;k) \subset LR(l;k)$.

Let us prove bounds for $P[LR_1(l;k)]$. Firstly, we handle the easiest case. In Section \ref{section:phase_transition_for_total_covering} we proved that $P[\cE = \RR^2] = 1$ when $\alpha \le 1$. Therefore $P[LR_1(l;k)] = 1$ for these values of $\alpha$ and we omit the proof of this result. We only have to be concerned with the case where $\alpha > 1$. In this case, it holds

\newconstant{const:crossing_box_with_one_ellipse}
\begin{prop}
\label{prop:crossing_box_with_one_ellipse}
If $\alpha >1$ and $k, l > 0 $ satisfy $lk > 2$, then there is a constant $\uc{const:crossing_box_with_one_ellipse} = \uc{const:crossing_box_with_one_ellipse}(\alpha, \uc{const:R_decay}) > 0$ such that:
\begin{equation}
1 - \exp[ - \uc{const:crossing_box_with_one_ellipse}^{-1} u (k \wedge k^{-\alpha}) l^{2 - \alpha}] \le P(LR_1(l;k)) \le 1 - \exp[ - \uc{const:crossing_box_with_one_ellipse} u (k^{2-\alpha}\vee k^{-\alpha}) l^{2 - \alpha}].
\label{eq:crossing_box_with_one_ellipse}
\end{equation}
\end{prop}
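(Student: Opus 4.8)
The plan is to use the thinning technology of Proposition \ref{prop:thinning} together with the single-ellipse estimate of Lemma \ref{lema:prob_ellipse_intersects_ball_step1}, exactly in the two-step spirit of Section \ref{section:probability_of_simple_events}. The event $LR_1(l;k)$ is that some ellipse of $\xi$ simultaneously meets the left side $L^-(l;k)$ and the right side $L^+(l;k)$; these two sides are vertical segments of length $l$, separated by horizontal distance $lk$. So I would fix a point $z \in \RR^2$ (the center), estimate the probability $h(z)$ that a random ellipse $E_z$ with axes distributed as $\rho \otimes \nu$ meets both $L^-$ and $L^+$, and then apply the thinning proposition: $P[LR_1(l;k)] = 1 - \exp\bigl[-u \int_{\RR^2} h(z)\,\mathrm dz\bigr]$, so the whole task reduces to showing
\begin{equation*}
\uc{const:crossing_box_with_one_ellipse}^{-1} (k \wedge k^{-\alpha}) l^{2-\alpha} \le \int_{\RR^2} h(z)\,\mathrm dz \le \uc{const:crossing_box_with_one_ellipse} (k^{2-\alpha}\vee k^{-\alpha}) l^{2-\alpha}.
\end{equation*}

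For the pointwise bound on $h(z)$: to meet both sides, the ellipse must in particular have its major axis long enough and suitably aligned. A clean way to get an upper bound is to note $\{E_z \text{ meets } L^- \text{ and } L^+\} \subset \{E_z \cap B(w^-,l)\ne\varnothing\} \cap \{E_z \cap B(w^+,l)\ne\varnothing\}$, where $w^\pm$ are the midpoints of the two sides; but more efficient is to observe that if $E_z$ meets both sides then $R(E_z) \ge$ (distance between the two sides as seen from $z$) $\ge lk$ roughly, and the angular window for $V(E_z)$ is of order (vertical extent)$/$(horizontal extent) — this is precisely the geometry already extracted in Lemma \ref{lema:prob_ellipse_intersects_ball_step1}. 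For the lower bound, I would restrict to centers $z$ lying in a favorable region (say horizontally between the two sides and vertically within the strip), require $R(E_z)$ to exceed a constant multiple of $lk$ and $|V(E_z)|$ to be at most a constant multiple of $\tfrac1k$ (or of order $1$ when $k$ is small), and check that this forces the major axis to run from one side to the other — giving $h(z) \gtrsim (lk)^{-\alpha}(k^{-1}\wedge 1)$ on a region of area of order $l^2 k$. Multiplying, $\int h \gtrsim l^2 k \cdot (lk)^{-\alpha}(k^{-1}\wedge 1) = (k\wedge k^{-\alpha}) l^{2-\alpha}$, which is the claimed lower bound; for the upper bound one integrates $h(z) \asymp a(z)\, |z|^{-(\alpha+1)}$-type estimates from Lemma \ref{lema:prob_ellipse_intersects_ball_step1} over all of $\RR^2$, splitting according to whether $z$ is near the box or far, and the far contribution is controlled by the $|z|^{-(\alpha+1)}$ tail since $\alpha>1$.

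The main obstacle is bookkeeping the dependence on $k$ uniformly: when $k$ is large the binding constraint is the length $R(E_z)\gtrsim lk$, contributing $(lk)^{-\alpha}$, times an angular window $\asymp 1/k$, times an available area that also degrades — this is the source of the $k^{2-\alpha}$ in the upper bound versus $k^{-\alpha}$ (equivalently $k\wedge k^{-\alpha}$) in the lower bound, and keeping the two constants honest while letting $k$ range over $(0,\infty)$ requires carefully choosing the favorable region of centers in the lower bound and carefully partitioning $\RR^2$ in the upper bound. A secondary subtlety is the hypothesis $lk>2$, which (as in the earlier lemmas) is exactly what is needed so that the distances involved exceed the $c(\varepsilon)$-type thresholds and the asymptotic form $g(z)\asymp |z|^{-(\alpha+1)}$ is valid; I would simply record that $lk>2$ makes all the invocations of Lemma \ref{lema:prob_ellipse_intersects_ball_step1} legitimate. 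Once $\int h$ is pinned between the two expressions, the conclusion \eqref{eq:crossing_box_with_one_ellipse} follows immediately by exponentiating.
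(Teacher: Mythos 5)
Your proposal is correct and follows essentially the same route as the paper: a single-ellipse geometric estimate (large $R$, angular window of width $\asymp k^{-1}\wedge 1$) combined with the thinning identity $P[LR_1]=1-\exp[-u\int h]$, with the lower bound obtained by restricting centers to a favorable sub-box of area $\asymp kl^2$ and the upper bound by splitting centers into those within distance $\asymp (k\vee 1)l$ of the box (where $R\geq lk/2$ is forced) and the far ones (controlled by the $|z|^{-(\alpha+1)}$ tail, exactly the paper's $LR_1^1$/$LR_1^2$ decomposition). The only cosmetic difference is that for small $k$ you impose $R\gtrsim lk$ where the paper imposes $R\gtrsim l\sqrt{1+9k^2}\asymp l$, which yields the slightly stronger lower-bound exponent $k^{1-\alpha}l^{2-\alpha}\geq k\,l^{2-\alpha}$ and is still consistent with the upper bound.
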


\begin{remark}
The restriction $lk > 2$ is necessary to avoid that the horizontal length of $B_\infty(l;k)$, given by $kl$, were too small. In that case, a well positioned center of ellipse guarantees the crossing, independently of its major axis direction and size. Since we are mainly interested in cases in which $kl \to \infty$, this restriction is harmless.
\end{remark}

\begin{remark}
The lower bound will be important for Section \ref{subsection:triviality_of_critical_points_when_alpha_in_1_2}. Also, notice that when $\alpha = 2$ and $k$ is fixed then $P[LR_1(l;k)]$ is bounded away from $0$ and $1$ uniformly on $l$. This property plays an important role in Section \ref{section:vacant_crossing_of_boxes_for_alpha_equal_2}.
\end{remark}

\begin{proof}
We begin proving the lower bound.

\vspace{2mm}
\noindent\textbf{Lower Bound:} In order to find a lower bound for $P[LR_1(l;k)]$ let us study an event contained in the event $LR_1(l;k)$. Instead of searching all $\RR^2$ for some ellipse that makes the crossing, we can restrict our search to a region that is simpler to analyze. We may force the center of the ellipse to be in the interior of $B_{\infty}(l;k)$. Using the notation above, define the event
\begin{equation*}
LR_1^{-}(l;k) := \left\{ \exists s \in \supp \xi ; 
     \begin{array}{l}
          E(s) \cap L^{-}(l;k) \neq \varnothing, E(s) \cap L^{+}(l;k) \neq \varnothing \\
          \text{and} \ c\bigl( E(s) \bigr) \in B_{\infty}(l/2; k)
     \end{array}
 \right\}.
\end{equation*}

We want that at least one ellipse intersects both $L^{-}(l;k)$ and $L^{+}(l;k)$. If we fix the center of the ellipse, this implies a lower bound for $R(E)$, the size of its major axis. However, it is not enough that $R(E)$ is sufficiently large. It is also necessary to consider the direction of its major axis $V(E)$. The choice of restricting to a subregion of $B_\infty(l;k)$ comes in handy now; independently of where $c(E)$ is, if we know that $c(E) \in B_{\infty}(l/2;k)$ then 
\begin{equation}
\label{eq:suff_cond_for_ellipse_in_LRminus}
\left\{
\begin{array}{l}
          V(E) \in \left( - \arctan\left( \frac{1}{3k}\right), \arctan\left( \frac{1}{3k}\right)\right) \\
           \text{ and } R(E) \geq \frac{l}{4} \sqrt{1 + 9k^2}
     \end{array}
\right\}
\ \text{implies} \  
\left\{
     \begin{array}{l}
          E \cap L^{-}(l;k) \neq \varnothing \text{ and } \\
          E \cap L^{+}(l;k) \neq \varnothing
     \end{array}
\right\},
\end{equation}

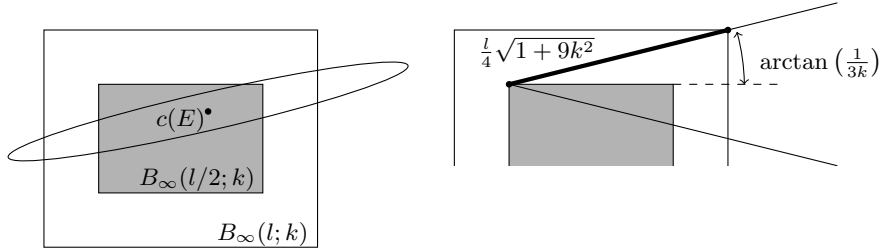
\begin{figure}
\centering
\begin{tikzpicture}[scale=.36]
 \draw (0,0) rectangle (10,8);
 \draw[fill=gray!60] (2,2) rectangle (8,6);
 \node at (8,.5) {\small $B_\infty(l;k)$};
 \node at (5.6,2.5) {\small $B_\infty(l/2; k)$};
 \begin{scope}[shift={(-1,0)}]
 \filldraw (7,5) circle (.1);
 \node at (6,4.7) {\small $c(E)$};
 \draw[rotate around={13:(7,5)}] (7,5) circle (7.5 and .7);
 \end{scope}

 \begin{scope}[shift={(15,-2)}]
 \clip (-0.3,5) rectangle (17,12);
 \draw (0,0) rectangle (10,10);
 \draw[fill=gray!60] (2,2) rectangle (8,8);
 \draw[dashed] (8,8) -- (12,8);
 \draw         (2,8) -- (14,11);
 \draw         (2,8) -- (14, 5);
 \filldraw (2,8)   circle (.1);
 \filldraw (10,10) circle (.1);
 \draw[ultra thick]  (2,8) -- (10,10);
 \draw[<->] (10.6,8) arc (1:15:8);
 \node at (13.4,8.8) {\small $\arctan\left( \frac{1}{3k}\right)$};
 \node at (3,9.2) {\small $\frac{l}{4} \sqrt{1 + 9k^2}$};
 \end{scope}
\end{tikzpicture}
\caption{Condition in \eqref{eq:suff_cond_for_ellipse_in_LRminus} implies the event $LR_1^{-}(l;k)$ happens.}
\label{fig:condition_for_LR1_minus}
\end{figure}

\noindent as is represented in Figure \ref{fig:condition_for_LR1_minus}. Denote $V_{\max} := \arctan\left( \frac{1}{3k}\right)$ and $R_{\min} := \frac{l}{4}\sqrt{1 + 9k^2}$. For $z \in B_{\infty}(l/2; k)$, if we choose randomly an ellipse $E_z$ centered in $z$ with major axis distributed like $\rho\otimes\nu$ as usual, we have
\begin{align}
P\left[ 
\begin{array}{l}
          E_z \cap L^{-}(l;k) \neq \varnothing, \\
          E_z \cap L^{+}(l;k) \neq \varnothing
     \end{array}
\right]
 &\overset{\phantom{\text{Ind.}}}{\geq}
P\left[ 
\begin{array}{l}
          R(E_z) \geq R_{\min}, \\
          |V(E_z)| \le V_{\max}
     \end{array}
\right] \nonumber\\
 &\overset{\text{Ind.}}{=}  P( |V(E_z)| \le V_{\max}) \, P( R(E_z) \geq R_{\min} ) \geq 2V_{\max} \cdot \uc{const:R_decay}^{-1} R_{\min}^{-\alpha}. \label{eq:low_bound_thin_LR1_minus}
\end{align}

Notice that in the last computation we needed to verify that $R_{\min} \geq 1$, otherwise $P( R(E_z) \geq R_{\min} ) = 1$. However, our hypothesis that $kl > 2$ ensures $R_{\min} \geq 1$. To achieve the wished lower bound, we turn to Proposition \ref{prop:thinning} with the function 
\begin{equation*}
g(z) = P[E_z \cap L^{-}(l;k) \neq \varnothing,\; E_z \cap L^{+}(l;k) \neq \varnothing] \, \inds{B_\infty(l/2;k)}(z),
\end{equation*}
which by \eqref{eq:low_bound_thin_LR1_minus} satisfies $g(z) \geq 2V_{\max} \cdot \uc{const:R_decay}^{-1} R_{\min}^{-\alpha}\; \inds{B_\infty(l/2;k)}(z)$. Thus, we have
\begin{align}
P(LR_1(l;k)) &\geq P[LR_1^-(l;k)] \geq P[\omega_g(\RR^2) \geq 1] = 1 - \exp\left[ - u \int_{\RR^2} g(z) \;\text{d}z \right] \nonumber\\
             &\geq 1 - \exp\left[ - u \int_{B_\infty(l/2;k)} 2V_{\max} \cdot \uc{const:R_decay}^{-1} R_{\min}^{-\alpha} \;\text{d}z \right] \nonumber\\
             &=    1 - \exp\left[ - u \cdot 2\arctan\left( \tfrac{1}{3k} \right)  \cdot \uc{const:R_decay}^{-1} l^{-\alpha} 4^{\alpha}(1 + 9k^2)^{-\alpha/2} \cdot \frac{kl^2}{4}\right] \nonumber\\
             &\geq 1 - \exp\left[ - u\; \uc{const:crossing_box_with_one_ellipse}^{-1} \cdot \arctan\left( \tfrac{1}{3k} \right)k (k + 1)^{-\alpha} \cdot l^{2-\alpha}\right]
\label{eq:low_bound_LR1(lk)_part1}
\end{align}
for some constant $\uc{const:crossing_box_with_one_ellipse} = \uc{const:crossing_box_with_one_ellipse}(\uc{const:R_decay}, \alpha)$.

To simplify the function $f(k) = \arctan\left( \tfrac{1}{3k} \right)k (k + 1)^{-\alpha}$, that appears on equation \eqref{eq:low_bound_LR1(lk)_part1}, we notice that $f(k) \sim \tfrac{\pi}{2} k$ when $k \to 0$, $f(k) \sim \tfrac{1}{3}k^{-\alpha}$ when $k \to \infty$ and $f$ is a continuous, positive function on $(0, \infty)$. By this, changing the constant $\uc{const:crossing_box_with_one_ellipse}$ if needed we can assure that:
\begin{equation}
P(LR_1(l;k)) \geq P(LR_1^{-}(l;k)) \geq 1 - \exp[ - u \uc{const:crossing_box_with_one_ellipse}^{-1} (k \wedge k^{-\alpha}) l^{2 - \alpha}]. 
\label{eq:low_bound_LR1(lk)}
\end{equation}

\vspace{3mm}
\noindent\textbf{Upper Bound:} To prove the upper bound, we decompose the event $LR_1(l;k)$ into two independent events. The idea is to decompose it with respect to the position of the ellipse that makes the crossing. To simplify the notation, we denote $a = (k \vee 1)l$. With this, notice that $B_{\infty}(l;k) \subset B(a) \subset B(2a)$. Define the events
\begin{equation*}
LR_1^{1}(l;k) := \left\{ \exists s \in \supp \xi ; 
     \begin{array}{l}
          E(s) \cap L^{-}(l;k) \neq \varnothing, E(s) \cap L^{+}(l;k) \neq \varnothing \\
          \text{and} \ c(E(s)) \in B(2a)
     \end{array}
 \right\},
\end{equation*}
\begin{equation*}
LR_1^{2}(l;k) := \left\{ \exists s \in \supp \xi ; 
     \begin{array}{l}
          E(s) \cap L^{-}(l;k) \neq \varnothing, E(s) \cap L^{+}(l;k) \neq \varnothing \\
          \text{and} \ c(E(s)) \notin B(2a)
     \end{array}
 \right\}.
\end{equation*}

Omitting the dependence on $l$ and $k$ and observing that the above defined events are independent, it holds that
\begin{equation}
\label{eq:decomposition_LR1_into_LR1j}
P[LR_1] = P[LR_1^1 \cup LR_1^2] = 1 - P[(LR_1^1)^c \cap (LR_1^2)^c] = 1 - P[(LR_1^1)^c]P[(LR_1^2)^c].
\end{equation}

\vspace{3mm}
\noindent\textbf{Bound for $LR_1^2$:} Initially, by event inclusion, notice that the following inequality holds
\begin{equation*}
P[LR_1^2(l;k)] \le P[\exists s \in \supp \xi; \ c(E(s)) \notin B(2a) \text{ and } E(s) \cap B(a) \neq \varnothing].
\end{equation*}

Apply Proposition \ref{prop:new_ellipse_touches_ball}.\textit{(i)} with $C = 2$ and $g(z) = P[ E_z \cap B(a) \neq \varnothing] \inds{B(2a)^c}(z)$ to deduce
\begin{equation}
\label{eq:up_bound_for_LR2_1}
P[LR_1^2(l;k)^c] \geq P[\omega_g(\RR^2) = 0] \geq \exp[ - u \uc{const:lemma_ellipse_touches_ball_from_far}^{-1} a^{2 - \alpha}  ]
\end{equation}
and then define a constant $\hat{\uc{const:crossing_box_with_one_ellipse}} = \hat{\uc{const:crossing_box_with_one_ellipse}}(\uc{const:R_decay}, \alpha)$ with the same value as $\uc{const:lemma_ellipse_touches_ball_from_far}^{-1}$.

\vspace{3mm}
\noindent\textbf{Bound for $LR_1^1$:} Notice that for any ellipse $E$, the farthest point covered by $E$ from its center $c(E)$ is at distance $R(E)$. Then, for an ellipse $E(s)$ with $s \in \supp \xi$ to be able to connect both sides of $B_{\infty}(l;k)$, it is necessary that
\begin{equation*}
R(E(s)) \geq \max \{ \dist(c(E(s)), L^-(l;k)), \dist(c(E(s)), L^+(l;k)) \} \geq \frac{lk}{2}.
\end{equation*}

In this way, for any center of ellipse $z$ that is inside the ball $B(2a)$ we have
\begin{equation}
P[E_z \cap L^-(l;k) \neq \varnothing, \ E_z \cap L^+(l;k) \neq \varnothing] \le P[R(E_z) \geq lk/2] \le \uc{const:R_decay}(lk/2)^{-\alpha}.
\label{eq:up_bound_for_LR1_1}
\end{equation}

Now, we want to apply Proposition \ref{prop:thinning} to the function 
\begin{equation*}
g(z) = P[E_z \cap L^-(l;k) \neq \varnothing, \ E_z \cap L^+(l;k) \neq \varnothing] \;\inds{B(2a)}(z).
\end{equation*}

Notice that $g(z) \le \uc{const:R_decay}2^\alpha (lk)^{-\alpha} \, \inds{B(2a)}(z)$ by equation \eqref{eq:up_bound_for_LR1_1} and hence
\begin{align}
P[LR_1^1(l;k)^c] 
  &\geq P[\omega_g(\RR^2) =0] = \exp\left[ - u \int_{\RR^2} g(z) \;\text{d}z \right] \geq \exp\left[ - u \int_{B(2a)} \hspace{-4mm}\uc{const:R_decay}2^\alpha (lk)^{-\alpha} \;\text{d}z \right] \nonumber\\
  &= \exp\left[ - u \uc{const:R_decay} 2^\alpha (lk)^{-\alpha} \pi(2a)^2 \right] = \exp\left[ - u \tilde{\uc{const:crossing_box_with_one_ellipse}} k^{-\alpha}(k\vee 1)^2 l^{2 - \alpha}\right],
\label{eq:up_bound_for_LR1_2}
\end{align}
where we defined a constant $\tilde{\uc{const:crossing_box_with_one_ellipse}} = \tilde{\uc{const:crossing_box_with_one_ellipse}}(\uc{const:R_decay}, \alpha)$. To finish the upper bound for the probability of $LR_1(l;k)$, we substitute on equation \eqref{eq:decomposition_LR1_into_LR1j} the 
estimates from equations \eqref{eq:up_bound_for_LR2_1} and \eqref{eq:up_bound_for_LR1_2}:
\begin{align*}
P[LR_1(l;k)]
 &\le  1 - \exp \left[ - u \tilde{\uc{const:crossing_box_with_one_ellipse}} k^{-\alpha}(k\vee 1)^2 l^{2-\alpha} \right] \exp[ - u \hat{\uc{const:crossing_box_with_one_ellipse}} (k\vee 1)^{2 - \alpha} l^{2-\alpha}] \\
 &\le  1 - \exp[ - u \uc{const:crossing_box_with_one_ellipse} k^{-\alpha}(k\vee 1)^2 l^{2-\alpha}] = 1 - \exp[ - u \uc{const:crossing_box_with_one_ellipse} (k^{2-\alpha}\vee k^{-\alpha}) l^{2-\alpha}].
\end{align*}

Here, we used that $k^{-\alpha}(k\vee 1)^2 \geq (k\vee 1)^{2 - \alpha}$ and took $\uc{const:crossing_box_with_one_ellipse}(\uc{const:R_decay}, \alpha) = \hat{\uc{const:crossing_box_with_one_ellipse}} + \tilde{\uc{const:crossing_box_with_one_ellipse}}$.
\end{proof}

\subsection{Triviality of Critical Points when \texorpdfstring{$\alpha \in (1,2)$}{alpha is in (1,2)}}
\label{subsection:triviality_of_critical_points_when_alpha_in_1_2}

We use Proposition \ref{prop:crossing_box_with_one_ellipse} and Borel-Cantelli's lemma. For the covered set our proof is straightforward.

\begin{proof}[Proof of Theorem \ref{teo:covered_set_alpha_phase_transition}.2]
Consider the boxes $B_{n} = [0,2^{n+1}]\times [0,2^n]$ for $n$ odd and $B_n = [0,2^n]\times [0,2^{n+1}]$ for $n$ even. If for all sufficiently large $n$ we have horizontal crossings of $B_n$ for $n$ odd and vertical crossings of $B_n$ for $n$ even, then it is clear that $\cE$ percolates. Proposition \ref{prop:crossing_box_with_one_ellipse} proves that
\begin{equation*}
P[\{\text{crossing of $B_n$ by one ellipse}\}^c] \le \exp[-u c (2^{2- \alpha})^n],
\end{equation*}
which is summable, so Borel-Cantelli implies $P[\liminf_n \{\text{crossing of $B_n$ by one ellipse}\}] = 1$.
\end{proof}

Now, we prove that almost surely $\cV$ does not percolate for any $u > 0$ when $\alpha \in (1,2)$. One way to prove percolation does not happen is to use an argument of duality. We would like to prove that, almost surely, there is an infinite collection of circuits of covered areas around the origin.

Working with ellipses that can be centered anywhere on $\RR^2$, it can be tricky to analyze general circuits of ellipses. In our proof, we replicate the idea in \cite{1010.5338}, Proposition 5.6. We show that it is enough to look at a very special kind of collection of circuits; every circuit in the collection will be made of three carefully positioned ellipses.

We use the same notation of paper \cite{1010.5338}, Proposition 5.6, with minor modifications. For convenience, we replicate it here: 
\begin{equation*}
S_1^{\pm}(a) = \left\{ \pm \frac{\sqrt3}{2}a \right\} \times \left[ -\frac{a}{2}, -\frac{a}{4} \right] .
\end{equation*}

Notice that $S_1^{+}(a)$ and $S_1^{-}(a)$ are both segments on $\RR^2$. We also define the similar segments 
$S_2^{\pm}(a)$ and $S_3^{\pm}(a)$. Denote by $\cR_{2\pi/3}$ the counter-clockwise rotation of angle $2\pi/3$ 
around the origin on $\RR^2$ and define $S_j^{\pm}(a) = \cR_{2\pi/3}^{j} S_1^{\pm}(a)$, for $j = 2, 3$. If for each $j$ we have an ellipse connecting $S_j^{+}(a)$ and $S_j^{-}(a)$, then we have formed a circuit of ellipses around the origin (see Figure \ref{fig:TW_circuit_of_ellipses}).

Our objective is to check for which values of $\alpha$ and $u$ we can guarantee that these circuits will appear infinitely often. Fortunately, the proof holds even in the case $\alpha = 2$ and thus Lemma \ref{lema:alpha_le_2_is_subcritical} below will be used also in the proof of Theorem \ref{teo:vacant_crossing_of_boxes_alpha_2}.

\begin{lema}
\label{lema:alpha_le_2_is_subcritical}
Fix $\alpha \le 2$. Then, for any $\rho$ with decay $\alpha$ we have $\bar{u}_c(\rho) = 0$.
\end{lema}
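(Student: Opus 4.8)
The plan is to adapt the circuit construction of \cite{1010.5338}, Proposition~5.6. Fix an arbitrary $u>0$. The goal is to prove that, almost surely, $\cE$ contains closed curves surrounding the origin at arbitrarily large distances; this forces every connected component of $\cV$ to be bounded, so $P_{u,\rho}[\cV\text{ percolates}]=0$, and since $u>0$ is arbitrary this gives $\bar u_c(\rho)=\inf\{u\ge0:P_{u,\rho}[\cV\text{ percolates}]=0\}=0$. (When $\alpha\le1$ this is already contained in Proposition~\ref{prop:total_covering}; the argument below treats all $\alpha\le2$ at once.)

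I would take scales $a_n=K^n$, with $K$ a large constant to be fixed at the end. For $j\in\{1,2,3\}$ recall the segments $S_j^{\pm}(a)$, and note that $S_1^{-}(a)$ and $S_1^{+}(a)$ are exactly the left and right sides of the rectangle $\bigl[-\tfrac{\sqrt3}{2}a,\tfrac{\sqrt3}{2}a\bigr]\times\bigl[-\tfrac a2,-\tfrac a4\bigr]$, which is a rigid motion of $B_\infty(l;k)$ with $l=a/4$ and the \emph{fixed} aspect ratio $k=4\sqrt3$; the pairs $S_2^{\pm}(a),S_3^{\pm}(a)$ are obtained from this picture by rotation through multiples of $2\pi/3$. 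For $n$ large enough that $lk=\sqrt3\,a_n>2$, let $F_{n,j}$ be the event that some $s\in\supp\xi$ has $c(E(s))$ in a small box $D_{n,j}$ of side $a_n/100$ centred at the appropriate rotate of $\bigl(0,-\tfrac38 a_n\bigr)$, direction $V(E(s))$ in a small fixed cone about the favourable direction, and major axis $R(E(s))$ in a range $[\theta_1 a_n,\theta_2 a_n]$; here $0<\theta_1<\theta_2$ and the cone are chosen, depending only on $\alpha$ and $\uc{const:R_decay}$, so that such an ellipse is forced to meet both $S_j^{+}(a_n)$ and $S_j^{-}(a_n)$, to remain within distance of order $a_n$ from the origin, and to stay at distance at least $\theta_0\,a_n$ from the origin for a universal $\theta_0>0$. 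Running the thinning estimate from the lower-bound half of the proof of Proposition~\ref{prop:crossing_box_with_one_ellipse} --- Proposition~\ref{prop:thinning} applied to the indicator of $D_{n,j}$ times the probability that a random ellipse centred there has direction in the cone and major axis in $[\theta_1 a_n,\theta_2 a_n]$, a probability that is $\asymp a_n^{-\alpha}$ by \eqref{eq:def_of_R} once $\theta_2/\theta_1$ is taken large --- and using the invariances of the model (Remark~\ref{remark:invariances_of_ellipses_model}), would yield
\[
P[F_{n,j}]\ \ge\ 1-\exp\!\bigl[-u\,c\,a_n^{2-\alpha}\bigr]\ \ge\ \delta\ >\ 0
\]
for all large $n$, where $c,\delta$ depend only on $u,\uc{const:R_decay},\alpha$ and not on $n$. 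This is exactly where $\alpha\le2$ enters: the exponent $2-\alpha\ge0$ keeps $a_n^{2-\alpha}$ bounded below (it equals $1$ when $\alpha=2$ and tends to $\infty$ when $\alpha<2$), whereas for $\alpha>2$ it decays and the whole scheme collapses, consistently with the possibility $\bar u_c(\rho)>0$ in that regime.

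I would then choose $K$ large enough that the boxes $\{D_{n,j}:n\ge n_0,\ j\in\{1,2,3\}\}$ are pairwise disjoint; the three at a common scale are rotations by multiples of $2\pi/3$ of a box concentrated near distance $\tfrac38 a_n$ from the origin, and boxes at different scales lie at incomparable distances. Since $F_{n,j}$ depends only on $\xi$ restricted to $D_{n,j}\times[1,\infty)\times(-\pi/2,\pi/2]$ and $\xi$ is a Poisson point process on $S$ (Definition~\ref{defi:ellipses_model}), all the events $F_{n,j}$ are jointly independent. Put $A_n:=F_{n,1}\cap F_{n,2}\cap F_{n,3}$; then $P[A_n]\ge\delta^3>0$, the $A_n$ are independent and $\sum_n P[A_n]=\infty$, so the second Borel--Cantelli lemma gives $P[A_n\text{ infinitely often}]=1$. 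On $A_n$ the three selected ellipses connect the respective pairs $S_j^{\pm}(a_n)$, and --- this purely topological fact is the one used in \cite{1010.5338}, Proposition~5.6 --- their union then contains a Jordan curve $\gamma_n\subset\cE$ surrounding the origin; by the direction and size constraints built into $F_{n,j}$ one has $\gamma_n\cap B(\theta_0 a_n)=\varnothing$, so the connected set $B(\theta_0 a_n)\ni 0$ lies in the bounded component of $\RR^2\setminus\gamma_n$. Finally, on $\{A_n\text{ infinitely often}\}$, if $\Gamma\subset\cV$ were connected and unbounded, then for each of the infinitely many curves $\gamma_n$ we have $\Gamma\cap\gamma_n=\varnothing$, so $\Gamma$ lies in a single component of $\RR^2\setminus\gamma_n$; being unbounded it lies in the unbounded one, whence $\Gamma\cap B(\theta_0 a_n)=\varnothing$. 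Since $\theta_0 a_n\to\infty$, this forces $\Gamma=\varnothing$, a contradiction. Hence $P_{u,\rho}[\cV\text{ percolates}]=0$ and $\bar u_c(\rho)=0$.

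The main obstacle will be the geometric bookkeeping that makes the two constraints compatible: the ellipse centres must be pinned into small boxes that are genuinely separated across scales --- needed so that the circuit events $A_n$ are \emph{exactly} independent through the Poisson structure --- while the one-ellipse crossing probability of a box of fixed aspect ratio must still be bounded away from $0$ \emph{uniformly in the scale}, which is precisely what fails for $\alpha>2$ and is what makes $\alpha=2$ critical. The accompanying topological statement, that three ellipses joining the $S_j^{\pm}(a)$ necessarily enclose the origin, is imported from \cite{1010.5338}, Proposition~5.6.
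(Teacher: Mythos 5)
Your proposal is correct and is essentially the paper's own argument: three ellipses with centres pinned in small disjoint boxes form circuits around the origin at geometrically growing scales, the one-ellipse crossing estimate gives a probability bounded below uniformly in the scale precisely because $a_n^{2-\alpha}$ does not decay when $\alpha\le 2$, disjointness of the centre-boxes yields exact independence, and Borel--Cantelli finishes. The only (harmless and in fact unnecessary) deviation is your additional upper bound $R\le\theta_2 a_n$ on the major axis, which the paper's proof of Lemma \ref{lema:alpha_le_2_is_subcritical} does not impose.
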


\begin{proof}
We use Proposition \ref{prop:crossing_box_with_one_ellipse} for a fixed proportion of the box we would like to cross. Consider the box $B_1(a) := \bigl[ - \tfrac{\sqrt3}{2}a, \tfrac{\sqrt3}{2}a \bigr] \times \bigl[ -\tfrac{a}{2}, -\tfrac{a}{4} \bigr]$, which is a translation of the box $B_\infty(\tfrac{a}{4} ; 4\sqrt{3})$. Also, define $B_j(a)$ for $j = 2, 3$ by rotating the already defined box $B_1(a)$. Notice that the events
\begin{equation*}
C_j(a) := \left\{ \exists s \in \supp \xi; \ E(s) \cap S_j^+(a) \neq \varnothing \text{ and } E(s) \cap S_j^-(a) \neq \varnothing \right\}
\end{equation*}
are not independent for different $j$. To get independence, we restrict ourselves to ellipses centered on smaller boxes contained on $B_j(a)$, exactly like we did on the proof of the lower bound of Proposition \ref{prop:crossing_box_with_one_ellipse}. Recall that in the proof of the lower bound we considered 
the event $LR_1^-(l;k)$ in which our ellipses had to be centered on $B_\infty(l/2; k)$. However, the choice of $1/2$ was arbitrary and if we consider only ellipses centered on $B_{\infty}(cl;k)$ for some fixed $c \in (0,1)$, we obtain the same lower bound with a different constant $\uc{const:crossing_box_with_one_ellipse}$.

We just have to choose some constant $c \in (0,1)$ to force the ellipses that make the crossing of $B_j(a)$ to have their centers in disjoint regions. Define $D_1(a) := [-ca , ca] \times [ h_1 a, h_2 a]$, where the constants $c$, $h_1$ and $h_2$ are chosen so that 
\begin{equation*}
D_1(a) \subset B_1(a) \ \text{and} \ D_1(a) \cap B_i(a) = \varnothing, \  \forall i \neq 1.
\end{equation*}

\begin{figure}
\centering
\begin{tikzpicture}[scale=.25]
 \draw (0,0) circle ({20/sqrt(3)});
 \draw[thick,->] (-15,  0) -- (15, 0);
 \draw[thick,->] (  0,-13) -- ( 0,15);
 
 \foreach \x in {0, 120, 240}{
 \begin{scope}[rotate= \x]
 \draw              (-10,{-10/sqrt(3)}) -- ( 10,{-10/sqrt(3)});
 \draw[ultra thick] ( 10,{-10/sqrt(3)}) -- ( 10,{- 5/sqrt(3)});
 \draw              ( 10,{- 5/sqrt(3)}) -- (-10,{- 5/sqrt(3)});
 \draw[ultra thick] (-10,{- 5/sqrt(3)}) -- (-10,{-10/sqrt(3)});
 
 \filldraw[pattern=north east lines,pattern color=gray!80]     (- 3,{- 9/sqrt(3)}) rectangle (  3,{- 6/sqrt(3)});
 \end{scope}
 }
 
 \node            at (9.7,{-3.7/sqrt(3)}) {\small $B_1(a)$};
 \node[right=1mm] at ( 10,{-7.5/sqrt(3)}) {\small $S^+_1(a)$};
 \node[ left=1mm] at (-10,{-7.5/sqrt(3)}) {\small $S^-_1(a)$};
 \node            at (  0,{-7.5/sqrt(3)}) {\small $D_1(a)$};
 
\end{tikzpicture}
\caption{Boxes $B_j(a)$ and $D_j(a)$ for $j \neq 1$ are obtained by rotations.}
\label{fig:TW_circuit_of_ellipses}
\end{figure}
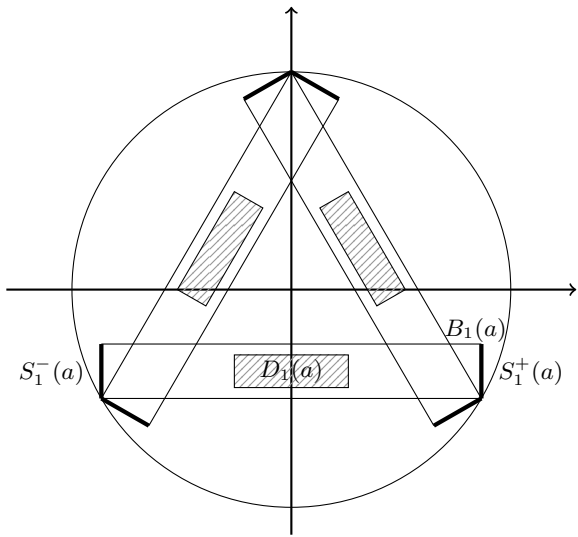

As we did before, we can use rotations to define the analogous regions $D_j(a)$ for $j = 2, 3$. Using the regions $D_j$, we can define events that are similar to $C_j(a)$ and are independent indeed. Define
\begin{equation*}
\tilde{C}_j(a) := \left\{ \exists s \in \supp \xi; 
 \begin{array}{l}
          E(s) \cap S_j^+(a) \neq \varnothing, E(s) \cap S_j^-(a) \neq \varnothing \ \text{and} \ c(E(s)) \in D_j(a)
     \end{array}
 \right\}.
\end{equation*}

By the proof of the lower bound in Proposition \ref{prop:crossing_box_with_one_ellipse}, we get the bound:
\begin{equation}
 P[\tilde{C}_j(a)] \geq 1 - \exp[- \uc{const:crossing_box_with_one_ellipse}^{-1} u a^{2 - \alpha}], \ \forall j.
\label{eq:low_bound_indep_crossing_C_tilde_j}
\end{equation}

Define $\Delta_a := \cap_{j = 1}^{3} \tilde{C}_j(a)$. Using the bound on equation \eqref{eq:low_bound_indep_crossing_C_tilde_j}, we have
\begin{align*}
P[\Delta_a] 
  &=  P[\tilde{C}_1(a) \cap \tilde{C}_2(a) \cap \tilde{C}_3(a)] \stackrel{\text{(Indep.)}}{\geq} (1 - \exp[- \uc{const:crossing_box_with_one_ellipse}^{-1} u a^{2 - \alpha}])^3.
\end{align*}

Finally, notice that taking the sequence $a_n = 3^n$ makes the events $\Delta_{a_n}$ independent, since they only depend on what the realization of the PPP $\xi$ looks like on disjoint regions of $\RR^2$. Thus, by Borel-Cantelli's lemma, since 
\begin{equation*}
\sum_{n \geq 1} P[\Delta_{a_n}] \geq \sum_{n \geq 1} (1 - \exp[- \uc{const:crossing_box_with_one_ellipse}^{-1} u 3^{n(2 - \alpha)}])^3 = \infty \text{ for $\alpha \le 2$,}
\end{equation*}
we conclude that $P[\Delta_{a_n}, \ \text{i.o.}] = 1$ and then $P[\cV \ \text{percolates}] = 0$.
\end{proof}

\subsection{Proving Phase Transition in \texorpdfstring{$u$ for $\alpha > 2$}{u for alpha greater than 2}}
\label{subsection:proving_phase_transition_in_u_for_alpha_greater_2}

The last ingredient to finish the proof of Theorems \ref{teo:alpha_phase_transition} and \ref{teo:covered_set_alpha_phase_transition} is to prove the behavior of ellipses model when $\alpha > 2$. As we mentioned in the introduction, this can be easily done by dominating $(u, \rho)$-ellipses model by Boolean model of radius distribution $\rho$ and intensity $u \lambda$, because of the results of Gou{\'e}r{\'e} \cite{gouere2008subcritical}. The techniques in \cite{gouere2008subcritical} are enough to prove the existence of a phase transition in $u$ for this values of $\alpha$ for both $\cE$ and $\cV$. However, studying the vacant set was not a priority in \cite{gouere2008subcritical}. For convenience of the reader we provide a full argument for this case, through Lemmas \ref{lema:bound_on_qk_in_multiscale_renorm} and \ref{lema:phase_transition_in_u_for_alpha_greater_2}.

Denote by $P_{u,\rho}^{\circ}$ the probability measure associated to the Boolean model above defined. Notice that since $\rho$ has tail decay $\alpha$ and support on $[1,\infty)$, we have
\begin{equation}
E[R^t] = \int_1^\infty R^t \, \rho(\text{d}R) =  \int_1^\infty \int_0^R ty^{t-1}\, \text{d}y \, \rho(\text{d}R) = \int_0^\infty ty^{t-1}\, \rho(R \geq y) \, \text{d}y
\end{equation}
which implies $E[R^t] < \infty$ for $t \in (0, \alpha)$. Since $\alpha > 2$, we have that $E[R^2] < \infty$ and thus by Theorem 2.1 of \cite{gouere2008subcritical} there is a positive constant $c$ such that  
\begin{equation*}
P_{u, \rho}[\cE \ \text{percolates}] \le P_{u, \rho}^{\circ}[\cE \ \text{percolates}] = 0, \ \forall u \in (0,\, cE[R^2]^{-1}).
\end{equation*}

Moreover, by Theorem 2.2 of \cite{gouere2008subcritical} if we define $\cC$ as the connected (covered) component of the origin and $D:= \diam \cC$ then for any fixed $t \in (0, \alpha - 2)$ we have
\begin{equation*}
E[R^{2 + t}] < \infty \ \text{implies} \ E_{u,\rho}^\circ [D^{t}] < \infty \ \text{for $u < c E[R^2]^{-1}$}
\end{equation*}
and by Markov's inequality we conclude $P_{u, \rho}[D \geq l] \le P_{u, \rho}^\circ[D \geq l] \le E_{u,\rho}^\circ [D^{t}] \cdot l^{-t}$. This means that the probability of the origin being connected to $\partial B(l)$ decays at least polynomially in $l$. This provides the correct decay of $P_{u,\rho}[0 \overset{\cE}{\longleftrightarrow} \partial B(l)]$, since by Proposition \ref{prop:new_ellipse_touches_ball} we have
\begin{equation*}
P_{u, \rho}[0 \overset{\cE}{\longleftrightarrow} \partial B(l)] \geq P_{u, \rho}[\exists s \in \supp \xi; z \in B(2l)^c, E(s) \cap B(l) \neq \varnothing] \geq 1 - \exp[- u \uc{const:lemma_ellipse_touches_ball_from_far} l^{2 - \alpha}] \sim u \uc{const:lemma_ellipse_touches_ball_from_far} l^{2 - \alpha}
\end{equation*}
when $l \to \infty$. The only statement we still have not proved in Theorems \ref{teo:alpha_phase_transition} and \ref{teo:covered_set_alpha_phase_transition} is that for small $u$ the vacant set percolates. We try to keep the same notation of \cite{gouere2008subcritical}. Define 
\begin{equation*}
\pi(l) := P_{u, \rho}^\circ[\partial B(l) \overset{\cE}{\longleftrightarrow} \partial B(8l) \ \text{using only balls centered on $B(10l)$}].
\end{equation*}

We denote by $G(l)$ the event in the definition of $\pi(l)$. Proposition 3.1 of \cite{gouere2008subcritical} proves there is a constant $C > 0$ such that 
\newconstant{const:rec_gouere}
\begin{equation}
\label{eq:recurrence_from_gouere}
\pi(10l) \le C \pi(l)^2 + u C \int_l^\infty \!\! R^2 \, \rho(\text{d}R) \le C \pi(l)^2 + u \uc{const:rec_gouere} l^{2 - \alpha}, \forall l \geq 1,
\end{equation}
\noindent in which the last inequality follows from a straightforward computation and $\rho[l, \infty) \le \uc{const:R_decay} l^{-\alpha}$ and $\uc{const:rec_gouere} = \uc{const:rec_gouere}(\uc{const:R_decay}, \alpha)$ is a constant. Also, if $\partial B(l)$ is connected to $\partial B(8l)$ then either $G(l)$ happened or there is a ball centered on $B(10l)^c$ intersecting $B(8l)$. This leads to the bound
\begin{equation}
\label{eq:bounding_through_pi}
P_{u, \rho}^\circ[\partial B(l) \overset{\cE}{\longleftrightarrow} \partial B(8l)] \le \pi(l) + 1 - \exp\bigg[- u \int_{B(10l)^c} \hspace{-4mm} \rho\big[ |z| - 8l, \infty \big) \, \text{d}z\bigg]  \le \pi(l) + u\uc{const:rec_gouere} l^{2 - \alpha}
\end{equation}
by a computation similar to the one in equation \eqref{eq:recurrence_from_gouere}. Define $q_k(u, \rho) = P_{u, \rho}^\circ[\partial B(10^k) \overset{\cE}{\longleftrightarrow} \partial B(8 \cdot 10^k)]$ for $k \geq 0$. We have:
\begin{equation}
q_{k+1} \le \pi(10^{k+1}) + u\uc{const:rec_gouere}(10^{2 - \alpha})^{k + 1} \le C\pi(10^k)^2 + u\uc{const:rec_gouere}(10^{2 - \alpha})^{k} \le \uc{const:rec_gouere} q_{k}^2 + u\uc{const:rec_gouere}(10^{2 - \alpha})^{k}. \label{eq:recur_ineq_qk_gouere}
\end{equation}

Using the recurrence relation in \eqref{eq:recur_ineq_qk_gouere} we can prove that for small values of $u$ the sequence $q_k$ tends to zero very fast.

\begin{lema}
\label{lema:bound_on_qk_in_multiscale_renorm}
Fix $\alpha > 2$. There exists $u_0 = u_0(\alpha, \uc{const:R_decay}) > 0$ such that $q_k(u, \rho) \le \exp[ - 2(\alpha - 2) k ]$, for all $k \geq 1$ and for all $u < u_0$.
\end{lema}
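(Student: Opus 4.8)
The plan is a two-scale bootstrap of the recursion \eqref{eq:recur_ineq_qk_gouere}. Write $\beta := 2(\alpha-2) > 0$ and $\theta := 10^{2-\alpha} \in (0,1)$, so that \eqref{eq:recur_ineq_qk_gouere} reads $q_{k+1} \le \uc{const:rec_gouere}\, q_k^2 + u\,\uc{const:rec_gouere}\,\theta^k$ and the claim is $q_k \le e^{-\beta k}$. The one arithmetic fact driving everything is that $\theta\, e^{\beta} = 10^{2-\alpha}e^{2(\alpha-2)} = (e^2/10)^{\alpha-2} < 1$, because $e^2 < 10$; equivalently, the forcing term $\theta^k$ decays strictly faster than the target $e^{-\beta k}$. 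This is precisely why the exponent $2(\alpha-2)$, and not something larger, can be used here.

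First I would record the a priori bound $q_0(u,\rho) \le C_1 u$ for a constant $C_1 = C_1(\uc{const:R_decay},\alpha)$. Indeed $q_0$ is at most the probability that the Boolean model contains a ball intersecting $B(8)$, which equals $1 - \exp\bigl[-u\int_{\RR^2}\rho[(|z|-8)\vee 0,\infty)\,\text{d}z\bigr]$, and the integral is finite since $\alpha > 2$ and $\rho$ has tail decay $\alpha$. Feeding this into \eqref{eq:recur_ineq_qk_gouere} and using $\theta^k \le 1$, a trivial induction shows that for every $u \le 1$ one has $q_k \le B_k u$ with $B_0 = C_1$ and $B_{k+1} = \uc{const:rec_gouere}(B_k^2 + 1)$; each $B_k$ depends only on $k$, $\uc{const:rec_gouere}$ and $C_1$, hence only on $(\alpha,\uc{const:R_decay})$.

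Next I would fix the scale $k_0 = k_0(\alpha,\uc{const:R_decay})$ to be the least integer $k_0 \ge 1$ with $\uc{const:rec_gouere}\,e^{\beta(1-k_0)} \le 1/2$, and set
\begin{equation*}
u_0 := \min\bigl\{\, 1,\ (2\,\uc{const:rec_gouere}\,e^{\beta})^{-1},\ \min_{1\le j\le k_0} B_j^{-1}e^{-\beta j}\,\bigr\} > 0 .
\end{equation*}
For $u < u_0$, the previous paragraph gives $q_j \le B_j u \le e^{-\beta j}$ for $1 \le j \le k_0$. For $k \ge k_0$ I would close the induction: assuming $q_k \le e^{-\beta k}$, \eqref{eq:recur_ineq_qk_gouere} yields $q_{k+1} \le \uc{const:rec_gouere}\,e^{-2\beta k} + u\,\uc{const:rec_gouere}\,\theta^k$; the first summand is $\le \tfrac12 e^{-\beta(k+1)}$ because $\uc{const:rec_gouere}\,e^{\beta(1-k)} \le \uc{const:rec_gouere}\,e^{\beta(1-k_0)} \le \tfrac12$, and the second is $\le \tfrac12 e^{-\beta(k+1)}$ because $2u\,\uc{const:rec_gouere}\,e^{\beta}(\theta e^{\beta})^{k} \le 2u\,\uc{const:rec_gouere}\,e^{\beta} \le 1$ (using $\theta e^{\beta} < 1$); adding the two, $q_{k+1} \le e^{-\beta(k+1)}$. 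This covers all $k \ge 1$.

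The point that needs care is uniformity in $k$: iterating naively only gives $q_k \le B_k u$ with $B_k$ growing in $k$, so the admissible threshold on $u$ would shrink with $k$. The remedy — and the only genuine idea — is that beyond the scale $k_0$ the quadratic term $\uc{const:rec_gouere}q_k^2$ is negligible compared with $q_k$ itself and the additive forcing $u\,\uc{const:rec_gouere}\,\theta^k$ is negligible compared with the target, so the bound $q_k \le e^{-\beta k}$ regenerates itself; one then only pays the harmless price of checking the finitely many initial scales $1 \le k \le k_0$ directly, which the estimate $q_0 = O(u)$ makes immediate.
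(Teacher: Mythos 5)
Your proof is correct and follows essentially the same strategy as the paper's: the same induction on $k \ge k_0$, hinging on the same arithmetic fact $2(\alpha-2) < (\log 10)(\alpha-2)$ (i.e.\ $e^2<10$) to absorb the forcing term, with $k_0$ chosen so the quadratic term regenerates the bound and $u_0$ shrunk to handle the finitely many initial scales. The only (cosmetic) difference is that you obtain the base-case estimate $q_k \le B_k u$ by iterating the recursion from $q_0 = O(u)$, whereas the paper gets it directly from the crude bound $\pi(10^k)\le uC100^k$ of Lemma 3.6 in \cite{gouere2008subcritical}.
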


\begin{proof}
Fix $\varepsilon = 2(\alpha - 2)$ and notice that $0 < \varepsilon < (\log 10)(\alpha - 2) $. After that, take $k_0 = k_0(\alpha, \uc{const:R_decay})$ sufficiently large so that
\begin{equation}
\uc{const:rec_gouere} \exp[ \varepsilon - \varepsilon k_0 ] < \frac{1}{2} \hspace{3mm} \text{and} \hspace{3mm} 
\uc{const:rec_gouere} \exp\left[ \left( \varepsilon - (\log 10)(\alpha - 2)  \right) k_0 + \varepsilon \right] < \frac{1}{2}.
\label{eq:bound_on_qk_preliminaries}
\end{equation}

The choices above are possible only because our previous choice of $\varepsilon$ and the fact that $\alpha > 2$ 
together imply the left hand sides on equation \eqref{eq:bound_on_qk_preliminaries} tend to zero when $k_0 \to \infty$. Now that we fixed $k_0$, let us choose $u_0$. Notice that $q_k$ must be increasing in $u$ and besides, 
\begin{equation*}
\lim_{u \to 0+} q_k(u) = 0, \ \text{for any fixed $k$.}
\end{equation*}

One way to see this is combining \eqref{eq:bounding_through_pi} and Lemma 3.6 of \cite{gouere2008subcritical}:
\begin{equation}
\label{eq:easy_bound_qk}
q_k(u) \le \pi(10^k) + u \uc{const:rec_gouere} (10^{2 - \alpha})^k \le u C 100^k + u \uc{const:rec_gouere} (10^{2 - \alpha})^k
\end{equation}

Thus, take $u_0 = u_0(\alpha, \uc{const:R_decay})$ sufficiently small such that $u_0 \le 1$ and $q_{k_0}(u_0) \le \exp [ - \varepsilon k_0 ]$. Proceeding by induction, we will extend this inequality for all $k \geq k_0$. 
Suppose $q_k(u_0) \le \exp [ - \varepsilon k ]$. Using equation 
\eqref{eq:recur_ineq_qk_gouere}, we have that
\begin{align}
\frac{q_{k+1}(u_0)}{\exp[- \varepsilon (k+1) ]} \ 
  &\le \ \uc{const:rec_gouere} q^{2}_{k} \exp[ \varepsilon (k+1) ] + u_0 \uc{const:rec_gouere} 10^{k(2 - \alpha)} \exp[ \varepsilon (k+1) ] \nonumber\\
  &\le \ \uc{const:rec_gouere} \exp[ -2 \varepsilon k +\varepsilon k + \varepsilon ] + u_0 \uc{const:rec_gouere} 10^{k(2 - \alpha)} \exp[ \varepsilon (k+1) ] \nonumber\\
  &=   \ \uc{const:rec_gouere} \exp[ -\varepsilon k +\varepsilon] + u_0 \uc{const:rec_gouere} \exp[ - (\log 10)(\alpha - 2)k + \varepsilon(k+1) ] \nonumber\\
  &=   \ \uc{const:rec_gouere} \exp[ -\varepsilon k +\varepsilon] + u_0 \uc{const:rec_gouere} \exp[ (\varepsilon - (\log 10)(\alpha - 2)) k + \varepsilon ].
\end{align}

Since the right-hand side of the last equation is decreasing in $k$, we can use $k_0$ in the place of $k$. But then, by our choice of $\varepsilon$, $k_0$ and $u_0$ we can conclude $q_{k+1}(u_0) \le \exp[- \varepsilon (k+1) ]$, completing the induction step. To extend the bound to values of $k$ smaller than $k_0$ we can simply decrease $u_0$ even more using the crude bound on \eqref{eq:easy_bound_qk}. Finally, since $q_k(u)$ is increasing in $u$ the bound is valid for all $u < u_0$.
\end{proof}

Using Lemma \ref{lema:bound_on_qk_in_multiscale_renorm} we can show that $P_{u,\rho}( \cV \ \text{percolates}) = 1$ for $u < u_0(\alpha, \uc{const:R_decay})$.

\begin{lema}
\label{lema:phase_transition_in_u_for_alpha_greater_2}
Fix $\alpha > 2$ and a constant $\uc{const:R_decay}$. Then, for any $\rho$ with tail decay $\alpha$ and associated constant $\uc{const:R_decay}$ there exists $u_0(\alpha, \uc{const:R_decay}) \in (0, \infty)$ such that $\bar{u}_c(\rho) \geq u_0$.
\end{lema}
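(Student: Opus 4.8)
The plan is to reduce to the Poisson Boolean model and then, for small $u$, construct an unbounded vacant connected set by a Borel--Cantelli argument over dyadic scales. First I would use the coupling underlying the domination mentioned just above: since $E(z,R,V)\subseteq B(z,R)$, the process $\xi$ drives simultaneously the $(u,\rho)$-ellipses model and the Boolean model $P_{u,\rho}^\circ$ with radius law $\rho$, and $\cE\subseteq\cE^\circ$, hence $\cV\supseteq\cV^\circ:=\RR^2\setminus\cE^\circ$. As $\alpha>2$ we have $E[R^2]<\infty$, so Lemma~\ref{lema:bound_on_qk_in_multiscale_renorm} applies; it thus suffices to exhibit $u_0=u_0(\alpha,\uc{const:R_decay})>0$ with $P_{u,\rho}^\circ[\cV^\circ\text{ percolates}]=1$ for every $u<u_0$, since then $\cV$ percolates for all such $u$ and hence $\bar{u}_c(\rho)\ge u_0$.

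Fix $u$ below the constant $u_0$ of Lemma~\ref{lema:bound_on_qk_in_multiscale_renorm}, work under $P_{u,\rho}^\circ$, and set $\cA_k:=\{\,10^k\le|x|\le10^{k+1}\,\}$. For $k\ge1$ introduce the events
$$G_k:=\{\cV^\circ\text{ has a circuit around }0\text{ inside }\cA_k\},\qquad H_k:=\{\cV^\circ\text{ has a path from }\partial B(10^k)\text{ to }\partial B(10^{k+3})\}.$$
By the classical primal/dual dichotomy in an annulus, $G_k^c$ is the event that $\cE^\circ$ has a connected subset of $\overline{\cA_k}$ joining the two bounding circles; such a subset already joins $\partial B(10^k)$ to $\partial B(8\cdot10^k)$, so $P_{u,\rho}^\circ[G_k^c]\le q_k(u,\rho)\le e^{-2(\alpha-2)k}$ by Lemma~\ref{lema:bound_on_qk_in_multiscale_renorm}, which is summable. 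Likewise $H_k^c$ is the event that $\cE^\circ$ contains a circuit around $0$ inside $\{\,10^k\le|x|\le10^{k+3}\,\}$; I would bound $P_{u,\rho}^\circ[H_k^c]$ by splitting according to the radius $R^\ast$ of the largest ball of the encircling cluster. If $R^\ast\ge10^k$ the contribution is at most $uC(10^{k+3})^2\rho[10^k,\infty)\le C'u\,10^{(2-\alpha)k}$, summable since $\alpha>2$; if $R^\ast<10^k$, then the cluster, having diameter $\ge2\cdot10^k$ and balls of radius $<10^k$, forces a covered radial crossing of a fixed–aspect–ratio annulus at one of the $O(1)$ relevant dyadic scales in $[10^k,10^{k+3}]$ centered at one of the $O(1)$ relevant points of $B(C10^{k+3})$, and by translation invariance together with the recursion~\eqref{eq:recur_ineq_qk_gouere} (equivalently Lemma~\ref{lema:bound_on_qk_in_multiscale_renorm}) this part is again bounded by a summable sequence. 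Hence $\sum_k\big(P_{u,\rho}^\circ[G_k^c]+P_{u,\rho}^\circ[H_k^c]\big)<\infty$.

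By the ordinary Borel--Cantelli lemma, almost surely there is a random $K$ with $G_k\cap H_k$ for all $k\ge K$; pick a vacant circuit $\gamma_k\subseteq\cA_k$ around $0$ and a vacant arc $\eta_k$ from $\partial B(10^k)$ to $\partial B(10^{k+3})$ for each such $k$. For $k\ge K$ the circuit $\gamma_{k+1}\subseteq\overline{\cA_{k+1}}$ winds around $0$, hence separates $B(10^{k+1})$ from $\{\,|x|>10^{k+2}\,\}$ in the plane; since $\eta_k$ runs from $\partial B(10^k)\subseteq B(10^{k+1})$ to $\partial B(10^{k+3})\subseteq\{\,|x|>10^{k+2}\,\}$ it must meet $\gamma_{k+1}$, and the same holds for $\eta_{k+1}$, which starts on $\partial B(10^{k+1})\subseteq\overline{B(10^{k+1})}$ and ends on $\partial B(10^{k+4})$. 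Therefore $\eta_k\cup\gamma_{k+1}\cup\eta_{k+1}$ lies in a single connected subset of $\cV^\circ$, and chaining over $k\ge K$ exhibits $\bigcup_{k\ge K}(\eta_k\cup\gamma_k)$ as a connected unbounded subset of $\cV^\circ$; so $\cV^\circ$ percolates almost surely, which gives $\bar{u}_c(\rho)\ge u_0$. The delicate point is the estimate on $P_{u,\rho}^\circ[H_k^c]$: it is exactly here that the exponential bound of Lemma~\ref{lema:bound_on_qk_in_multiscale_renorm} is required rather than merely Gou{\'e}r{\'e}'s polynomial tail $E^\circ[D^t]<\infty$ for $t<\alpha-2$, since a naive union bound over the circle $\partial B(10^k)$ (or over the disk $B(10^{k+3})$) loses a polynomial factor in $10^k$ and would close the argument only for $\alpha>3$ (respectively $\alpha>4$).
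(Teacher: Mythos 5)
Your argument is correct in substance and rests on the same two pillars as the paper's proof --- domination of the ellipses model by the Boolean model with radius law $\rho$, and the summable bound $q_k \le e^{-2(\alpha-2)k}$ of Lemma \ref{lema:bound_on_qk_in_multiscale_renorm} --- but the route from there to percolation of $\cV$ is genuinely different. You construct the unbounded vacant component directly: vacant circuits in the annuli $\cA_k$ (whose failure is controlled by $q_k$ via annulus duality) chained together by vacant radial arms $\eta_k$, with Borel--Cantelli applied to the complements; the chaining of $\eta_k$, $\gamma_{k+1}$, $\eta_{k+1}$ is correct. The paper argues by contraposition instead: if $\cV$ did not percolate there would be covered circuits around the origin crossing the positive axis at arbitrarily large distances; tiling that axis with $32$ balls $B^i_j$ of radius $10^j$ per scale, any such circuit meeting $B^i_j$ must wind out of the right half-plane and hence out of the enlargement $\tilde B^i_j$, so each crossing triggers a translate of the arm event of probability $q_j$, and Borel--Cantelli rules out infinitely many of them. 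The paper's trick buys a free localization --- the circuit automatically yields a long covered arm from $\partial B^i_j$ to $\partial \tilde B^i_j$ whatever the radii of the balls involved, so no case analysis is needed --- whereas your direct construction pays for its extra conclusion (an explicit infinite vacant cluster) in the estimate of $P^\circ_{u,\rho}[H_k^c]$, where you must localize a covered circuit of a thick annulus by splitting on the largest ball radius $R^\ast$ and union-bounding over boundedly many translated crossing events along the axis. That step is sound but left schematic: the ``$O(1)$ relevant dyadic scales and points'' should be pinned down (a covering of the segment $[10^k,10^{k+3}]$ of the axis by order $10^4$ balls of radius comparable to $10^{k-1}$ does the job), and your displayed bound in the case $R^\ast\ge 10^k$ counts only balls centered in $B(C10^{k+3})$, omitting the equally large contribution of large balls centered outside, which is handled exactly as in \eqref{eq:bounding_through_pi}. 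Your closing remark correctly identifies why the exponential decay of $q_k$, rather than Gou{\'e}r{\'e}'s moment bound alone, is what makes the union bound close for every $\alpha>2$.
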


\begin{proof}
Take $u_0(\alpha, \uc{const:R_decay})$ from Lemma \ref{lema:bound_on_qk_in_multiscale_renorm}. If under measure $P_{u_0, \rho}^{\circ}$ the set $\cV$ a.s. does not percolate then there must exist a sequence $\gamma_n$ of disjoint circuits around the origin with $\gamma_n \subset \cE$ and such that 
\begin{equation*}
\dist(0, \gamma_n \cap (\RR^+\times \{0\})) \to \infty.
\end{equation*}

Focusing on this observation, consider the sequence of balls $(B^i_j)_{1 \le i \le 32, j \geq 0}$ 
where (see Figure \ref{fig:circuits_surround_origin}): 

\begin{itemize}
 \item All $B^i_j$ have their centers on the horizontal axis and radius $10^j$.\vspace{-2mm}
 \item The ball $B^1_0$ has its center at point $(8,0)$.\vspace{-2mm}
 \item The balls $B^i_j$ and $B^{i+1}_j$ are adjacent with $B^{i+1}_j$ on the right $\forall j, \forall 1 \le i \le 31$.\vspace{-2mm}
 \item The boxes $B^{32}_j$ and $B^1_{j+1}$ are adjacent with $B^1_{j+1}$ on the right $\forall j$.
\end{itemize}

\begin{figure}
\centering
\begin{tikzpicture}[scale=.13]
 \draw (-1, 0) -- (82,0) node[above,pos=.96] {$...$};
 \node[above] at (32.5,0) {$...$};
 \draw (0,1) -- (0,-1) node[below] {$0$};
 \draw (10.5,1) -- (10.5,-1) node[below] {$8$};
 \clip (-2,-20) rectangle (80,20);
 
 \begin{scope}[shift={(10,0)}]
 \foreach \x in {0, 1, 2}{
   \foreach \y in {0,1,2,3,5,6,7,8}{
   \begin{scope}[shift={({9*(pow(3,\x) - 1)/2},{-pow(3,\x)/2})}]
    \draw ({ (\y + 0.5)* pow(3,\x)}, {0.5*pow(3,\x)}) circle ({pow(3,\x)/2});
   \end{scope}
   }
 } 
 \end{scope}
 
 \draw[thick] plot [smooth cycle] coordinates {(-10,0) (0,-18) (10,-15) (17,0) (10,8) (0,15)};
 \node at (6,15) {$\gamma_1$};
 \draw[thick] plot [smooth cycle] coordinates {(-20,0) (0,-40) (20,-30) (35,0) (20,45) (0,30)};
 \node at (35,15) {$\gamma_2$};
  \draw[thick] plot [smooth cycle] coordinates {(-30,0) (0,-60) (30,-45) (54,0) (30,40) (0,45)};
 \node at (52,15) {$\gamma_3$};
\end{tikzpicture}
\caption{If $\cV$ does not percolate, circuits $\gamma_n$ must intersect balls $B^i_j$ with arbitrarily large $j$.}
\label{fig:circuits_surround_origin}
\end{figure}
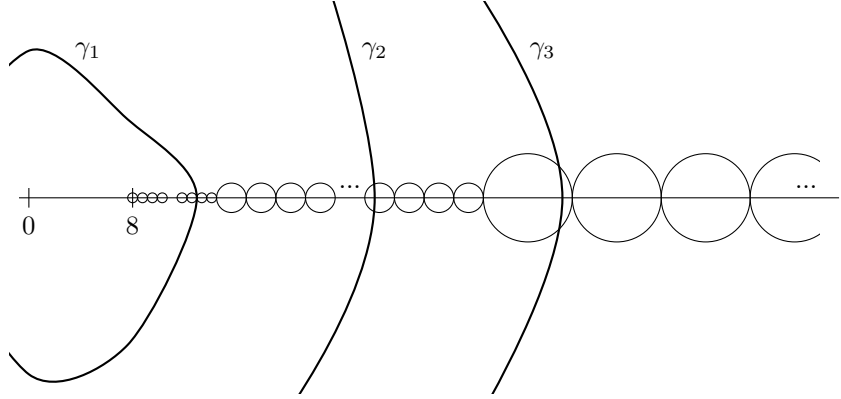

The choice of this construction of balls aims to ensure that whenever a circuit around the origin passes through $B^i_j$ a translation of the event $\{\partial B(10^j) \overset{\cE}{\longleftrightarrow} \partial B(8 \cdot 10^j)\}$ happens. Indeed, if we define $\tilde{B}^i_j := \{z \in \RR^2; \ \dist(z, B^i_j) \le 7\cdot 10^j \}$ then the event 
\begin{equation*}
A(B^i_j) = \{ \partial B^i_j  \overset{\cE}{\longleftrightarrow} \partial \tilde{B}^i_j\}
\end{equation*}
is just a translation of $\{\partial B(10^j) \overset{\cE}{\longleftrightarrow} \partial B(8 \cdot 10^j)\}$ and has also probability $q_j$. Notice that if there is a closed circuit around the origin $\gamma$ such that $\gamma \cap B^i_j \neq \varnothing$ then, since $\tilde{B}^i_j \subset \RR^+\times \RR$ by construction, we can deduce that $A(B^i_j)$ happened.

It follows from the definition of $B^i_j$ and the observation made above that if $\cV$ does not percolate then the circuits $\gamma_n$ must pass through balls $B^i_j$ with arbitrarily large $j$. Let $B_n$ be the ordering of balls $B^i_j$ sorted by their distance to the origin. Thus, using Lemma \ref{lema:bound_on_qk_in_multiscale_renorm} and Borel-Cantelli's Lemma, since $\sum_n 32 \cdot q_n < \infty$, we have
\begin{align*}
P_{u_0, \rho}\left( \{ \cV \ \text{perc.} \}^c \right) 
  &\le P_{u_0, \rho}^{\circ}\left( \{ \cV \ \text{perc.} \}^c \right) \le P_{u_0, \rho}^{\circ}\left( \begin{array}{l}
                  \text{$\exists (\gamma_n)$ circuits around origin with $\gamma_n \subset \cE$}\\
                  \text{and $\dist(0, \gamma_n \cap (\RR^+\times \{0\})) \to \infty$}
               \end{array}
        \right) \\
  &\le P_{u_0, \rho}^{\circ}[A(B_n), \ \text{i.o.}] = 0. \qedhere
\end{align*}
\end{proof}

\section{Decay of Correlations and Ergodicity}
\label{section:decay_of_correlations_with_distance}

In this section we derive a lemma that is useful to handle the dependence of some events in the ellipses model. It provides bounds that prove that some events are almost independent from one another if the distance between their dependence regions is large. In the same spirit of Section \ref{section:probability_of_simple_events}, this kind of estimate is essential to understand well any ellipses model.

One possible application of Lemma \ref{lema:decay_of_correlations_for_annulus} is to provide an alternative derivation of Lemma \ref{lema:bound_on_qk_in_multiscale_renorm}, without using reference \cite{gouere2008subcritical}. Moreover, we apply Lemma \ref{lema:decay_of_correlations_for_annulus} to prove ergodicity of the ellipses model with respect to the translations in $\RR^2$. Lemma \ref{lema:decay_of_correlations_for_annulus} is also important in our proof that when $\alpha = 2$ and $u$ is small the set $\cE$ does not percolate, almost surely.

Let $K$ be a measurable subset of $\RR^2$. Recall $\xi = \sum_i \delta_{s_i}$, where $s_i$ are points in $S$ for all $i$, is the PPP on $S$ that can be identified with the random collection of ellipses. Define 
\begin{equation*}
\xi_K := \sum_{i; E(s_i) \cap K \neq \varnothing} \delta_{s_i}
\end{equation*}
that is, the PPP obtained from $\xi$ by taking only the points of $\supp \xi$ whose ellipses intersect $K$.

\begin{defi}
 We say a function $f$ from the point processes on $S$ to $\RR$ depends only on the ellipses touching $K \subset \RR^2$ if $f(\xi) = f(\xi_K)$.
\end{defi}

We are now ready to state the decoupling we want to prove. The proof is similar to an argument of Sznitman \cite{sznitman2010vacant}, Theorem 2.1:

\newconstant{const:decay_of_correlations_annulus}
\begin{lema}
\label{lema:decay_of_correlations_for_annulus}
Take $\alpha > 1$. Let $K_1 = B(l_1)$ and $K_2 = B(l_2)^c$, with $l_2 = a l_1$, $a \geq 3$ and $l_1 \geq 1$. Let $f_1$ and $f_2$ be real functions of $\xi$ such that $|f_j| \le 1$, $f_1$ depends only on ellipses touching $K_1$ and $f_2$, on $K_2$. Then, there is a constant $\uc{const:decay_of_correlations_annulus} = \uc{const:decay_of_correlations_annulus}(\uc{const:R_decay}, \alpha) > 0$ such that 
\begin{equation}
\bigl| E[f_1 f_2] - E[f_1]E[f_2] \bigr| \le  u \uc{const:decay_of_correlations_annulus} l_1^{2-\alpha} (a - 1)^{1 - \alpha}.
\label{eq:decay_of_correlations_for_annulus}
\end{equation}
\end{lema}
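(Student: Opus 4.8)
The plan is to exploit the fact that the ellipses model is a Poisson point process on $S$, so that the dependence of $f_1$ on $\xi_{K_1}$ and of $f_2$ on $\xi_{K_2}$ can be decoupled by isolating the (random) set of ellipses that touch \emph{both} $K_1$ and $K_2$. Write $\xi = \xi_{\text{mid}} + \xi_{\text{rest}}$, where $\xi_{\text{mid}}$ collects the points $s_i \in \supp\xi$ with $E(s_i) \cap K_1 \neq \varnothing$ \emph{and} $E(s_i) \cap K_2 \neq \varnothing$, and $\xi_{\text{rest}}$ is the complementary part. By the restriction/thinning property of Poisson processes, $\xi_{\text{mid}}$ and $\xi_{\text{rest}}$ are independent PPPs. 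On the event $\{\xi_{\text{mid}}(S) = 0\}$, the function $f_1$ depends only on ellipses from $\xi_{\text{rest}}$ that touch $K_1$ but not $K_2$, and $f_2$ only on ellipses from $\xi_{\text{rest}}$ that touch $K_2$ but not $K_1$; since these two families of ellipses are disjoint and come from a Poisson process restricted to disjoint measurable subsets of $S$, they are independent. Hence, conditionally on $\{\xi_{\text{mid}}(S)=0\}$ one has $E[f_1 f_2 \mid \xi_{\text{mid}}(S)=0] = E[f_1 \mid \cdot]\,E[f_2 \mid \cdot]$, and a short bookkeeping argument (adding and subtracting the unconditional factorization, using $|f_j|\le 1$) gives
\begin{equation*}
\bigl| E[f_1 f_2] - E[f_1] E[f_2] \bigr| \le 4\, P[\xi_{\text{mid}}(S) \geq 1].
\end{equation*}

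The second step is to bound $P[\xi_{\text{mid}}(S)\ge 1] \le E[\xi_{\text{mid}}(S)] = u\cdot (\rho\otimes\nu\otimes\lambda)$-measure of the set of triples $(z,R,V)$ whose ellipse meets both $B(l_1)$ and $B(l_2)^c$. An ellipse meeting $B(l_1)$ has a point within distance $l_1$ of the origin, and meeting $B(l_2)^c$ forces its major axis to have size at least $l_2 - l_1 = (a-1)l_1$ (the farthest point of $E$ from its center is at distance $R(E)$, and a point of $E$ is within $l_1$ of the origin while another is at distance $\ge l_2$). So I can split according to the location of the center: either $c(E) \in B(2 l_2)$, in which case we integrate $P[R \ge (a-1)l_1] \asymp ((a-1)l_1)^{-\alpha}$ over a region of area $O(l_2^2) = O(a^2 l_1^2)$; or $c(E) \notin B(2l_2)$, in which case the ellipse must still reach $B(l_1)$, and this is exactly the ``ellipse touching a ball from far'' estimate of Lemma~\ref{lema:prob_ellipse_intersects_ball_step1} (with $a\leftarrow l_1$, $w = 0$), integrated over $B(2l_2)^c$, which contributes $O(l_1^{2-\alpha})$ — dominated by the other term when $a\ge 3$. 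Collecting, $E[\xi_{\text{mid}}(S)] \le u\, C\, l_1^2 a^2 \cdot ((a-1) l_1)^{-\alpha} = u\, C\, l_1^{2-\alpha} a^2 (a-1)^{-\alpha} \le u\, C\, l_1^{2-\alpha} (a-1)^{2-\alpha}\cdot\frac{a^2}{(a-1)^2}$, and since $a\ge 3$ gives $a \le \tfrac32(a-1)$, this is at most $u\,\uc{const:decay_of_correlations_annulus}\, l_1^{2-\alpha}(a-1)^{1-\alpha}$ after adjusting the constant (note $(a-1)^{2-\alpha} \le (a-1)^{1-\alpha}\cdot(a-1)$ is the wrong direction, so one in fact keeps $(a-1)^{2-\alpha}$ and absorbs one extra factor of $(a-1)$ from $a^2/(a-1)$ — I would organize the exponent bookkeeping so that the claimed $(a-1)^{1-\alpha}$ comes out, using $l_2 = a l_1$ carefully and $l_1 \ge 1$).

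The main obstacle is the exponent arithmetic in the second step: one must be careful that the area factor $O(l_2^2)$ combines with the tail factor $((a-1)l_1)^{-\alpha}$ to yield exactly $l_1^{2-\alpha}(a-1)^{1-\alpha}$ and not, say, $l_1^{2-\alpha}(a-1)^{2-\alpha}$. The point is that the region where the ellipse center can sit while the ellipse still reaches $B(l_1)$ from distance comparable to $l_2$ is not a full disk of radius $l_2$ but an annular sliver whose angular width is $O(l_1/|z|)$ — the same geometry as in Lemma~\ref{lema:prob_ellipse_intersects_ball_step1}. Integrating $P[R\ge |z| - l_1]\cdot(l_1/|z|)$ over $|z|$ from $\sim l_2$ to $\infty$ gives $l_1 \int_{l_2}^\infty |z|^{-\alpha-1}\,|z|\, d|z| \asymp l_1\, l_2^{1-\alpha} = l_1^{2-\alpha} a^{1-\alpha} \asymp l_1^{2-\alpha}(a-1)^{1-\alpha}$, which is precisely the claimed bound; so the right move is \emph{not} to bound the center region by a full disk but to reuse Lemma~\ref{lema:prob_ellipse_intersects_ball_step1} directly on all of $B(l_1/2)^c$ (or $B(l_2)^c$ suffices since ellipses touching $B(l_1)$ with center inside $B(l_2)$ but reaching $B(l_2)^c$ are a lower-order set handled as above). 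I would present the estimate in that streamlined form, citing Lemma~\ref{lema:prob_ellipse_intersects_ball_step1} for the tail of the angular window, and then the constant $\uc{const:decay_of_correlations_annulus}$ is assembled from $4$, the constant $\uc{const:step_1_for_A_ball}$, and a universal integration constant.
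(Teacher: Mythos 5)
Your proposal follows essentially the same route as the paper's proof: isolate the Poisson points whose ellipses touch both $K_1$ and $K_2$ (the paper's region $\Gamma_{12}\subset S$), reduce the covariance to a universal constant times $P[\xi_{\Gamma_{12}}(S)\ge 1]\le u\,(\lambda\otimes\rho\otimes\nu)(\Gamma_{12})$, and then estimate that intensity by splitting on the location of the center and combining the radius tail with the $O(l_1/|z|)$ angular window. The paper executes the decoupling with a two-independent-copies resampling of the $\Gamma_{12}$ part rather than by conditioning on $\{\xi_{\Gamma_{12}}(S)=0\}$, but this is a cosmetic difference; both give $\bigl|E[f_1f_2]-E[f_1]E[f_2]\bigr|\le C\,u\,\mu(\Gamma_{12})$.

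One caution on your final ``streamlined'' suggestion: integrating only $P[E_z\cap B(l_1)\ne\varnothing]$ from Lemma~\ref{lema:prob_ellipse_intersects_ball_step1} over all of $B(l_1/2)^c$ discards the requirement that the ellipse also reach $B(l_2)^c$, and yields $O(l_1^{2-\alpha})$ with no decay in $a$ at all, which is not enough. For centers with $l_1+1<|z|\lesssim l_2$ you must retain both the angular window $O(l_1/|z|)$ and the constraint $R\ge (l_2-l_1)/2$, which gives a contribution $\asymp l_1^{2-\alpha}\,a\,(a-1)^{-\alpha}\asymp l_1^{2-\alpha}(a-1)^{1-\alpha}$; for $|z|\gtrsim l_2$ the constraint $R\ge |z|-l_1$ together with the angular window gives your (correct) integral $\asymp l_1 l_2^{1-\alpha}$; and for $|z|\le l_1+1$ the crude area bound with $R\ge(l_2-l_1)/2$ suffices. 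This three-region bookkeeping is exactly the paper's split into $B_1$, $B_2$, $B_3$, so with that correction your argument closes.
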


\begin{proof}
Take two independent copies of $\xi$ and denote them by $\xi$ and $\xi'$. Fixed one of these copies, 
we decompose it into four independent PPP's on $\RR^4$. Consider the following partition of $S$:
\begin{align*}
\Gamma_1 &= \{ s; E(s) \cap K_1 \neq \varnothing \text{ and } E(s) \cap K_2 = \varnothing \}, &
\Gamma_2 &= \{ s;  E(s) \cap K_1 = \varnothing \text{ and } E(s) \cap K_2 \neq \varnothing \}, \\
\Gamma_{12} &= \{ s; E(s) \cap K_1 \neq \varnothing \text{ and } E(s) \cap K_2 \neq 
\varnothing \}, &
\Gamma_0 &= \{ s; E(s) \cap K_1 = \varnothing \text{ and } E(s) \cap K_2 = \varnothing \}. 
\end{align*}

These restrictions give birth to independent PPP's \cite{resnick2013extreme}. We decompose $\xi = \xi_1 + \xi_2 + \xi_{12} + \xi_0$, where $\xi_\square$ denotes the restriction of $\xi$ to the region $\Gamma_\square$. Analogously, we have $\xi' = \xi'_1 + \xi'_2 + \xi'_{12} + \xi'_0$. Define $\gamma_1 :=  \xi_1 + \xi'_2 + \xi_{12} + \xi'_0$ and $\gamma_2 :=  \xi'_1 + \xi_2 + \xi'_{12} + \xi_0$. Naturally, this construction makes $\gamma_1$ and $\gamma_2$ independent and with the same distribution of $\xi$. Besides, we have $(\gamma_1)_{K_1} = \xi_1 + \xi_{12} = \xi_{K_1}$ by construction. On the other hand, notice that $\xi_{K_2} = \xi_2 + \xi_{12}$ and $(\gamma_2)_{K_2} = \xi_2 + \xi'_{12}$. Using the relations above, we deduce:
\begin{align*}
P(  \xi_{K_2} \neq (\gamma_2)_{K_2} ) \ 
  & =  \ P(\xi_{12} \neq \xi'_{12}) \le P\left( \{\xi(\Gamma_{12}) \neq 0\} \cup \{\xi'(\Gamma_{12}) \neq 0\}\right) \le \ 2 P(\xi(\Gamma_{12}) \geq 1) \\
  &= 2\{ 1 - \exp[-(u\lambda \times \rho \times \nu)(\Gamma_{12})] \} \le \ 2 (u\lambda \times \rho \times \nu)(\Gamma_{12})
\end{align*}

\vspace{3mm}
\noindent\textbf{Estimating the measure of $\Gamma_{12}$:} Defining the notation $\mu := \lambda \times \rho \times \nu$, we want to estimate $\mu(\Gamma_{12})$. Notice that the distance between $K_1$ and $K_2$ is given by $(l_2 - l_1) =  l_1(a - 1)$. Thus, if $z \in \RR^2$ is the center of an ellipse intersecting both sets, we must have $R \geq l_1(a -1)/2$. To estimate the measure of $\Gamma_{12}$ we decompose $\RR^2$ into three different regions, according to the position of $z$. For $z \in B_1 := B(l_1 + 1)$, we have the trivial bound 
\begin{equation*}
\mu(z \in B_1, s \in \Gamma_{12}) \le \mu(z \in B_1, R \geq l_1(a -1)/2) \le \uc{const:decay_of_correlations_annulus} l_1^{2 - \alpha}(a - 1)^{-\alpha}.
\end{equation*}

If $B_2 := \{z; \ l_1 + 1 < |z| \le \frac{l_1 + l_2}{2} \}$ then for $z \in B_2$ we already have some restrictions on the possible values of $V$. Analogously to Lemma \ref{lema:prob_ellipse_intersects_ball_step1}, $V$ must be in an interval $V_{l_1,z}$ of total length $2\arcsin(\tfrac{l_1 + 1}{|z|})$ and thus
\vspace{-4mm}
\begin{align*}
\mu(z \in B_2, s \in \Gamma_{12}) 
  &\le \uc{const:decay_of_correlations_annulus} \!\int_{B_2} \int_{\tfrac{l_1(a-1)}{2}}^\infty \frac{l_1}{|z|} \, \rho(\text{d}R) \, \text{d}z \le \uc{const:decay_of_correlations_annulus} l_1 (l_1(a - 1))^{-\alpha} \! \int_{l_1 + 1}^{\tfrac{l_1 + l_2}{2}} \frac{1}{r} \, \text{d}r \le \uc{const:decay_of_correlations_annulus} l_1^{1 - \alpha}(a - 1)^{-\alpha}.
\end{align*}

Finally, if $z \in B_3 := \{z; |z| > \frac{l_1 + l_2}{2}\}$ then the restriction on $V$ still holds and now we use also $R \geq |z| - l_1$. We have
\vspace{-3mm}
\begin{align*}
\mu(z \in B_3, s \in \Gamma_{12}) 
  &\le \uc{const:decay_of_correlations_annulus} \int_{B_3} \int_{|z| - l_1}^\infty \frac{l_1}{|z|} \, \rho(\text{d}R) \, \text{d}z \le \uc{const:decay_of_correlations_annulus} l_1 \int_{\tfrac{l_1 + l_2}{2}}^{\infty} \left( 1 - \frac{l_1}{|z|} \right)^{-\alpha}|z|^{-(\alpha + 1)} \, \text{d}z \\
  &\le \uc{const:decay_of_correlations_annulus} l_1 \left( 1 - \frac{2l_1}{l_1 + l_2} \right)^{-\alpha} \int_{\tfrac{l_1 + l_2}{2}}^{\infty} |z|^{-(\alpha + 1)} \, \text{d}z = \uc{const:decay_of_correlations_annulus} l_1 \left( \frac{a + 1}{a - 1} \right)^{\alpha} \left[\frac{l_1 + l_2}{2}\right]^{-\alpha} \\
  &\le \uc{const:decay_of_correlations_annulus} l_1^{1 - \alpha} (a - 1)^{-\alpha}.
\end{align*}

Taking the worst of the three bounds gives $u\mu(\Gamma_{12}) \le u\uc{const:decay_of_correlations_annulus} l_1^{2 - \alpha}(a - 1)^{-\alpha}$.

\vspace{3mm}
Now, the only part that is still missing is how we relate the left hand side of equation 
\eqref{eq:decay_of_correlations_for_annulus} with the coupling we have defined above. Notice that
\begin{align}
E[f_1(\xi) f_2(\xi)] 
  & =  E[f_1(\xi_{K_1}) f_2(\xi_{K_2})] \nonumber\\
  & =  E\left[ f_1(\gamma_1) \ [f_2(\gamma_2) \ind{\xi_{K_2} = (\gamma_2)_{K_2}} + f_2(\xi_{K_2}) 
        \ind{\xi_{K_2} \neq (\gamma_2)_{K_2}}] \right] \label{eq:proof_decay_of_correlations_part_1}\\
\text{and also} \ \ \ E[f_1(\xi)] E[f_2(\xi)]
  & =  E[f_1(\gamma_1)] E[f_2(\gamma_2)]  \ \stackrel{\text{Ind.}}{=} \  E[f_1(\gamma_1) f_2(\gamma_2)] \nonumber\\
  & =  E\left[ f_1(\gamma_1) \ [f_2(\gamma_2) \ind{\xi_{K_2} = (\gamma_2)_{K_2}} + f_2(\gamma_2) 
        \ind{\xi_{K_2} \neq (\gamma_2)_{K_2}}] \right].
\label{eq:proof_decay_of_correlations_part_2}
\end{align}

Take the absolute value of the difference between the left hand sides in equations 
\eqref{eq:proof_decay_of_correlations_part_1} and \eqref{eq:proof_decay_of_correlations_part_2}. Using that 
$|f_j| \le 1$ and the triangular inequality, we get
\begin{align*}
\bigl| E[f_1 f_2] - E[f_1]E[f_2] \bigr| \ 
  &=    \ \left| E\left[ f_1(\gamma_1) ( f_2(\xi_{K_2}) - f_2(\gamma_2))\ind{\xi_{K_2} \neq (\gamma_2)_{K_2}} 
\right] \right| \nonumber\\
  &\le  \ 2 P[\xi_{K_2} \neq (\gamma_2)_{K_2}] \ \le \ u\uc{const:decay_of_correlations_annulus} l_1^{2 - \alpha}(a - 1)^{-\alpha}. \qedhere
\end{align*}
\end{proof}

The same method above can be used to estimate the decay of correlations for two balls. If we assume also that $\alpha > 2$, we are able to prove a bound that depends only on the distance of the balls and not their diameter.

\newconstant{const:decay_of_correlations}
\begin{lema}
\label{lema:decay_of_correlations}
Take $\alpha > 2$. Let $K_1$ and $K_2$ be (euclidean) balls with the same diameter $h$ and $r :=\nobreak \dist(K_1, K_2) \geq 2$. Let $f_1$ and $f_2$ be real functions of $\xi$ such that $|f_j| \le 1$ and $f_j$ depends only on ellipses touching $K_j$, for $j = 1, 2$. Then, there is a constant $\uc{const:decay_of_correlations} = \uc{const:decay_of_correlations}(\alpha, \uc{const:R_decay})$ such that 
\begin{equation}
\label{eq:decay_of_correlations}
\bigl| E[f_1 f_2] - E[f_1]E[f_2] \bigr| \le \uc{const:decay_of_correlations} u r^{2-\alpha}.
\end{equation}
\end{lema}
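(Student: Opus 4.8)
The plan is to reproduce, almost verbatim, the coupling used in the proof of Lemma \ref{lema:decay_of_correlations_for_annulus}, and then to replace the estimate of the measure of the ``bad'' set by one tailored to two balls. Concretely, I would take two independent copies $\xi,\xi'$ of the ellipses process, decompose each of them along the partition of $S$ into the sets $\Gamma_1,\Gamma_2,\Gamma_{12},\Gamma_0$ of ellipses touching only $K_1$, only $K_2$, both, or neither, and set $\gamma_1:=\xi_1+\xi'_2+\xi_{12}+\xi'_0$ and $\gamma_2:=\xi'_1+\xi_2+\xi'_{12}+\xi_0$. As in that proof, $\gamma_1,\gamma_2$ are independent, each distributed as $\xi$, with $(\gamma_1)_{K_1}=\xi_{K_1}$ and $(\gamma_2)_{K_2}$ differing from $\xi_{K_2}$ only on the ellipses of $\Gamma_{12}$, so that
\begin{equation*}
\bigl|E[f_1f_2]-E[f_1]E[f_2]\bigr|\;\le\;2\,P\bigl[\xi_{K_2}\neq(\gamma_2)_{K_2}\bigr]\;\le\;4\,P\bigl[\xi(\Gamma_{12})\ge 1\bigr]\;\le\;4u\,\mu(\Gamma_{12}),
\end{equation*}
where $\mu=\lambda\times\rho\times\nu$. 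Thus everything reduces to the bound $\mu(\Gamma_{12})\le c(\uc{const:R_decay},\alpha)\,r^{2-\alpha}$.

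To estimate $\mu(\Gamma_{12})$ I would integrate over the centre $z$ of the ellipse, splitting $\RR^2$ into a bounded region around the two balls and its complement, exactly as in the corresponding step of Lemma \ref{lema:decay_of_correlations_for_annulus} and in the proof of Lemma \ref{lema:prob_ellipse_intersects_ball_step1}. Let $w_1,w_2$ denote the centres of $K_1,K_2$, so $|w_1-w_2|=r+h$. The two elementary inputs are: if $E(z,R,V)$ meets a ball $B(w,h/2)$ then $R\ge|z-w|-h/2$, and whenever $|z-w|>h/2+1$ its direction $V$ must lie in an arc of length $2\arcsin\bigl(\tfrac{h/2+1}{|z-w|}\bigr)$ around the direction of $z-w$; consequently any ellipse meeting both balls has $R\ge\max\{|z-w_1|,|z-w_2|\}-h/2\ge r/2$. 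On the near region $\{z:\ |z-w_1|\le 2(r+h)\ \text{or}\ |z-w_2|\le 2(r+h)\}$, of area $O((r+h)^2)$, I only use the tail bound $P[R\ge r/2]\le\uc{const:R_decay}(r/2)^{-\alpha}$, which contributes $O((r+h)^2r^{-\alpha})$. On the complementary far region $|z-w_1|\asymp|z-w_2|\gtrsim r+h$, and using independence of $R$ and $V$ together with both the lower bound $R\gtrsim|z-w_1|$ and the angular restriction I obtain an integrand of order $\uc{const:R_decay}(h+1)|z-w_1|^{-(\alpha+1)}$, whose integral over that region is of order $(h+1)(r+h)^{1-\alpha}$ (the integral converging because $\alpha>1$). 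Both contributions are of the claimed order $r^{2-\alpha}$, which gives the bound.

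The delicate point is the near region: there the angular restriction is useless — a ball seen from a nearby centre subtends an angle of order $1$ — so one must control the ellipses that hit both $K_1$ and $K_2$ purely through the tail of $\rho$ at scale $r$, and this is exactly what turns the exponent into $2-\alpha$ (a quantity of centres of order $(r+h)^2$ times a tail of order $r^{-\alpha}$) instead of the $1-\alpha$ that the far region alone would give. The hypothesis $\alpha>2$ plays no role in the identities above; its purpose is to make the resulting bound $r^{2-\alpha}$ tend to $0$ as $r\to\infty$, so that the estimate genuinely expresses that $f_1$ and $f_2$ become independent as the balls move apart. For $\alpha\in(1,2]$ the same computation still goes through but the bound no longer decays, which is why the companion Lemma \ref{lema:decay_of_correlations_for_annulus}, valid for $\alpha>1$, is instead stated in a form that keeps the radius of the inner ball visible.
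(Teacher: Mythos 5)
The paper offers no written proof of this lemma (it only says ``the same method'' as Lemma \ref{lema:decay_of_correlations_for_annulus} works), so you are following the intended route: the coupling reduction to $\bigl|E[f_1f_2]-E[f_1]E[f_2]\bigr|\le 2P[\xi(\Gamma_{12})\ge 1]\le 2u\mu(\Gamma_{12})$ is correct, and your far-region estimate is fine. The gap is in the near region. You assert that $(r+h)^2r^{-\alpha}$ is ``of the claimed order $r^{2-\alpha}$'', but this holds only when $h\le Cr$; for $h\gg r$ it is $\asymp h^2r^{-\alpha}\gg r^{2-\alpha}$. This is not a repairable slip within your approach, because $\mu(\Gamma_{12})$ genuinely diverges as $h\to\infty$ with $r$ fixed: writing $K_1=B(0,h/2)$, $K_2=B((h+r,0),h/2)$, every ellipse with centre $(x_0,y_0)$, $x_0\in[\tfrac h2+\tfrac r2,\tfrac h2+\tfrac r2+1]$, $|y_0|\le\tfrac12\sqrt{hr}$, direction close to the $x$-axis and $R\in[2r,Mr]$ has both endpoints of its major axis inside the respective balls (since $h^2/4-(h/2-3r/2)^2\ge hr$ for $h\ge 5r$), so $\mu(\Gamma_{12})\gtrsim\sqrt{hr}\,\rho[2r,Mr)\gtrsim\sqrt{hr}\,r^{-\alpha}\to\infty$. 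Hence $2u\mu(\Gamma_{12})$ cannot be $O(ur^{2-\alpha})$ uniformly in $h$, and your proof only establishes the lemma under the extra hypothesis $h\le Cr$ (with $\uc{const:decay_of_correlations}$ then depending on $C$), or with $r$ replaced by $r+h$ in part of the bound. Also, your closing remark that $\alpha>2$ ``plays no role in the identities'' is backwards: $\alpha>1$ already suffices for the annulus lemma, and $\alpha>2$ is used precisely to absorb the $h$-dependence of the far region, via $(h+1)(r+h)^{1-\alpha}\le 2(r+h)^{2-\alpha}\le 2r^{2-\alpha}$.

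You should be aware that the obstruction is in the statement, not only in your argument. Take $\Gamma'\subset\Gamma_{12}$ as the set of ellipses described above and set $f_1=f_2:=(-1)^{\xi(\Gamma')}$; since $\Gamma'\subset\Gamma_{12}$, each $f_j$ is a function of $\xi_{K_j}$ alone and $|f_j|\le 1$, yet
\begin{equation*}
E[f_1f_2]-E[f_1]E[f_2]\;=\;1-\exp\bigl[-4u\mu(\Gamma')\bigr],
\end{equation*}
which is bounded below by $1-e^{-4}$ once $h$ is so large that $u\mu(\Gamma')\ge 1$, while $\uc{const:decay_of_correlations}\,u\,r^{2-\alpha}$ can be made arbitrarily small by taking $r$ large. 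So no $h$-independent constant can work, and any correct version of the lemma must either restrict to $h\le Cr$ (which is what your computation actually proves, and is the regime in which such decoupling lemmas are typically applied) or let the bound depend on $h$.
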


To end this section, we prove the ergodicity of the ellipses model for translations. Consider the family of translations $(\tau_x)_{x \in \RR^2}$, where $\tau_x: \RR^2 \ni v \mapsto v + x$. We already know that $P_{u, \rho}$ is invariant with respect to any $\tau_x$. With Lemma \ref{lema:decay_of_correlations_for_annulus} we can prove more:
\begin{lema}
\label{lema:erdodicity_ellipse_model}
Let $A$ be an event such that $\tau_x(A) = A$, $P_{u, \rho}$-a.s., $\forall x\in \RR^2$. Then, $P_{u, \rho}[A] \in \{0, 1\}$.
\end{lema}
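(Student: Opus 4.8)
The plan is to prove ergodicity by showing that any translation-invariant event $A$ is asymptotically independent of itself, which forces $P_{u,\rho}[A]^2 = P_{u,\rho}[A]$ and hence $P_{u,\rho}[A] \in \{0,1\}$. The standard route is: first reduce to events that depend only on ellipses touching a bounded region, then use Lemma \ref{lema:decay_of_correlations_for_annulus} to decouple such an event from a far-away translate of itself, and finally let the translation distance go to infinity.

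Here is how I would carry it out. First, a density/approximation step: let $\cF_K$ be the $\sigma$-algebra generated by $\xi_K$ for $K = B(l)$, and let $\cF = \bigvee_{l} \cF_{B(l)}$. Since every ellipse $E(s)$ is bounded, $\xi = \lim_{l\to\infty} \xi_{B(l)}$ in the appropriate sense, so $\cF$ is (up to $P_{u,\rho}$-null sets) the full $\sigma$-algebra on which $\xi$ is measurable; in particular $A \in \cF$ up to null sets. Hence for any $\varepsilon > 0$ there is $l_1 \geq 1$ and an event $A' \in \cF_{B(l_1)}$ with $P_{u,\rho}[A \triangle A'] < \varepsilon$. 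Write $f_1 = \mathbbm{1}_{A'}$, so $f_1$ depends only on ellipses touching $B(l_1)$ and $|f_1| \le 1$.

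Next, the decoupling step. Fix $a \geq 3$ large and pick a vector $x = x(a)$ with $|x|$ large enough that the translate $\tau_x(A')$ is an event depending only on ellipses touching $B(x, l_1) \subset B(a l_1)^c =: K_2$; concretely it suffices to take $|x| \geq (a+1) l_1$. By translation invariance of $P_{u,\rho}$ (Remark \ref{remark:invariances_of_ellipses_model}) we have $P_{u,\rho}[\tau_x(A')] = P_{u,\rho}[A']$, and $f_2 := \mathbbm{1}_{\tau_x(A')}$ depends only on ellipses touching $K_2 = B(l_2)^c$ with $l_2 = a l_1$. Now apply Lemma \ref{lema:decay_of_correlations_for_annulus} with $K_1 = B(l_1)$, $K_2 = B(l_2)^c$, $f_1$ and $f_2$:
\begin{equation*}
\bigl| P_{u,\rho}[A' \cap \tau_x(A')] - P_{u,\rho}[A']^2 \bigr| \le u \uc{const:decay_of_correlations_annulus} l_1^{2-\alpha}(a-1)^{1-\alpha}.
\end{equation*}
Since $\tau_x(A) = A$ a.s. and $P_{u,\rho}[A\triangle A'] < \varepsilon$, we get $P_{u,\rho}[A\triangle \tau_x(A')] = P_{u,\rho}[\tau_x(A)\triangle\tau_x(A')] < \varepsilon$ as well, so $P_{u,\rho}[A' \cap \tau_x(A')] = P_{u,\rho}[A] + O(\varepsilon)$ and $P_{u,\rho}[A'] = P_{u,\rho}[A] + O(\varepsilon)$. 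Combining, $\bigl| P_{u,\rho}[A] - P_{u,\rho}[A]^2 \bigr| \le C\varepsilon + u \uc{const:decay_of_correlations_annulus} l_1^{2-\alpha}(a-1)^{1-\alpha}$. Letting $a \to \infty$ (with $l_1$ fixed, using $\alpha > 1$ so $(a-1)^{1-\alpha} \to 0$) and then $\varepsilon \to 0$ gives $P_{u,\rho}[A] = P_{u,\rho}[A]^2$, hence $P_{u,\rho}[A] \in \{0,1\}$.

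The main obstacle is the approximation step: one must justify carefully that a translation-invariant event can be approximated by events measurable with respect to ellipses touching a bounded ball, i.e. that $\bigvee_l \sigma(\xi_{B(l)})$ generates the full $\sigma$-algebra up to null sets. This is where the fact that ellipses are almost surely bounded sets (and that $\xi$ is a locally finite point process on $S$) is used: any event in $\sigma(\xi)$ is a monotone limit of events in $\sigma(\xi_{B(l)})$, because the configuration of ellipses intersecting any fixed bounded set is determined by finitely many points of $\xi$ and depends continuously (in a measure-theoretic sense) on those points. Once this is granted, the rest is a routine decoupling argument driven entirely by Lemma \ref{lema:decay_of_correlations_for_annulus}; note we only need $\alpha > 1$ here, which is consistent with the hypothesis of that lemma.
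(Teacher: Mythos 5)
Your proposal is correct and follows essentially the same route as the paper: approximate $A$ by events depending only on ellipses touching a bounded ball, decouple the approximant from its far-away translate via Lemma \ref{lema:decay_of_correlations_for_annulus}, and pass to the limit to get $P[A]=P[A]^2$. If anything, you are more explicit than the paper about the approximation step (the paper simply asserts the existence of the approximating sequence $A_n$), and your verification that $\tau_x(A')$ depends only on ellipses touching $B(l_2)^c$ for $|x|\ge (a+1)l_1$ is sound.
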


\begin{proof}
We omit $u$ and $\rho$ from $P_{u, \rho}$. Consider a sequence of events $A_n$ such that $P[A_n \Delta A] \le 2^{-n}$ and depends only on ellipses touching a finite ball $B(r_n)$. Then, for any fixed $n$ we have
\begin{align*}
|P[A] - P[A]^2| = |E[\inds{A}\inds{\tau_x(A)}] - E[\inds{A}]^2| 
  &\le |E[\inds{\tau_x(A)}(\inds{A} - \inds{A_n})]| + |E[\inds{A_n}(\inds{\tau_x(A)} - \inds{\tau_x(A_n)})]| + \\
  &\hspace{5mm}+ |E[\inds{A_n}\inds{\tau_x(A_n)}] - E[\inds{A_n}]^2| + |E[\inds{A_n}]^2 - E[\inds{A}]^2|
\end{align*}

If we make $x \to \infty$ then the term $|E[\inds{A_n}\inds{\tau_x(A_n)}] - E[\inds{A_n}]^2| \to 0$ by Lemma \ref{lema:decay_of_correlations_for_annulus} with $f_1 = \inds{A_n}$ and $f_2 = \inds{\tau_x(A_n)}$. Meanwhile, all the other terms can be bounded by $2P[A \Delta A_n]$, uniformly in $x$. Thus, making $x \to \infty$ and then $n \to \infty$ we obtain $P[A] = P[A]^2$.
\end{proof}

\section{Vacant Crossing of Boxes for \texorpdfstring{$\alpha = 2$}{alpha equal 2}}
\label{section:vacant_crossing_of_boxes_for_alpha_equal_2}

By Theorems \ref{teo:alpha_phase_transition} and \ref{teo:covered_set_alpha_phase_transition}, we already know that there is a phase transition in $\alpha$ for the percolative behavior of $\cV$ and $\cE$. In the process, we discovered with Proposition \ref{prop:crossing_box_with_one_ellipse} that when $\alpha = 2$ the probability of one ellipse crossing $B_\infty(l;k)$ is bounded away from 0 and 1, a very curious property.

By this reason, we focus on $\alpha = 2$ and study the vacant crossing of boxes. We fix $\alpha$ as 2 in this whole section. We begin defining the event in which we have a vacant crossing of a box $B_\infty(l;k)$ and restating Theorem \ref{teo:vacant_crossing_of_boxes_alpha_2}.

\begin{defi}
\label{defi:vacant_crossing}
Define the event:
\begin{equation*}
\overline{LR}(l;k) := \left\{ \exists \gamma: [0,1] \to \RR^2; 
     \begin{array}{l}
          \text{$\gamma$ is continuous,} \ \gamma([0,1]) \subset \cV \cap B_\infty(l;k), \\
          \gamma(0) \in L^-(l;k) \ \text{and} \ \gamma(1) \in L^+(l;k)
     \end{array} 
 \right\}
\end{equation*}
\end{defi}

\begin{teo:vacant_crossing_of_boxes_alpha_2}
Let $\rho$ be a distribution with $\alpha = 2$. Then, there exists $\bar{u} = \bar{u}(\uc{const:R_decay}) > 0$ such that for any fixed $k > 0$, $u \in (0,\bar{u})$ and $l > 0$ 
\begin{equation}
\label{eq:vacant_crossing_bounded_away}
\delta \le P_{u, \rho}[\overline{LR}(l;k)]\le 1 - \delta,
\end{equation}
where $\delta = \delta(\uc{const:R_decay}, u, k) > 0$. Moreover, for $u \in (0, \bar{u})$ we have:
\begin{equation}
\label{eq:cV_and_cE_not_percolate}
P_{u, \rho}[\text{neither} \ \cV \ \text{nor} \ \cE \ \text{percolate}] = 1.
\end{equation}
\end{teo:vacant_crossing_of_boxes_alpha_2}

\begin{proof}
We begin proving the upper bound on \eqref{eq:vacant_crossing_bounded_away}. Notice that by duality the event $\overline{LR}(l;k)$ is the complementary event of the one in which there is a vertical covered crossing of box $B_\infty(l;k)$ and if we apply a rotation by $\tfrac{\pi}{2}$ we get a left-right covered crossing of the box $B_\infty(kl;\tfrac{1}{k})$. Thus, using rotational invariance of the model and Proposition \ref{prop:crossing_box_with_one_ellipse} we can deduce
\begin{align}
P[\overline{LR}(l;k)] 
  &= P[LR(kl;\tfrac{1}{k})^c] \le P[LR_1(kl;\tfrac{1}{k})^c] \le \exp[ - \uc{const:crossing_box_with_one_ellipse}^{-1} u (k^{-1} \wedge k^{2})]
  \label{eq:up_bound_for_barLR}
\end{align}
for a constant $\uc{const:crossing_box_with_one_ellipse} = \uc{const:crossing_box_with_one_ellipse}(\uc{const:R_decay})$ whenever $l > 2$. Choosing $\delta(\uc{const:R_decay}, u,k)$ accordingly we get $P[\overline{LR}(l;k)] \le 1 - \delta$ for $l > 0$. However, for the lower bound we will need to choose $\bar{u}(\uc{const:R_decay})$ with some care.

%

\vspace{2mm}
\noindent\textbf{Idea for the lower bound:} Let us discuss how to prove the lower bound. A short argument (Lemma \ref{lema:vacant_crossing_of_boxes_alpha_2_k_2} below) implies that we only need to study the case $k = 2$ and $l > 1$. After that, all we have to do is to build an event that implies the vacant crossing of $B_\infty(l;2)$ and whose probability we can bound more easily. For that reason, we start to decompose the PPP $\xi$ into a sum of independent PPPs and analyze their contributions to the event we want to study. 

The first simplification is obtained through Proposition \ref{prop:new_ellipse_touches_ball}.\textit{(i)}. As we stressed before on Remark \ref{remark:ellipse_touches_ball_from_far}, it proves that we can pay a small price to prevent interference from ellipses centered too far away from our region of interest.

The second simplification is to notice that, since we can now worry only about ellipses centered in a finite ball, we can pay a reasonable price to ensure there are no ellipses with very large major axis inside it.

Finally, we have to treat the ellipses with major axis not too large. This is the most delicate part of the proof; since we are dealing with arbitrarily large scales, it is too much to expect that there will be no ellipses inside that region. We need to have some control on the connectivity even when there are many ellipses that could potentially block the vacant crossing. The idea behind this step is to use arguments of fractal percolation similar to the ones on Chayes, Chayes and Durrett \cite{ChayesDurrett1988fractal_percolation}, Theorem 1.

\vspace{3mm}
Following the script above, we begin simplifying our problem by restricting the values of $k$ and $l$ we need to analyze. We omit the proof of Lemma \ref{lema:vacant_crossing_of_boxes_alpha_2_k_2}, since it is a standard application of FKG inequality.

\begin{lema}
\label{lema:vacant_crossing_of_boxes_alpha_2_k_2}
If the lower bound holds for $k = 2$ then it holds for any $k > 0$.
\end{lema}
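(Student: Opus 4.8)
\emph{Proof plan.} I would deduce the lower bound for an arbitrary aspect ratio $k$ from the case $k=2$ by treating thin boxes ($0<k\le 2$) via monotonicity and wide boxes ($k>2$) by gluing a bounded number of crossings together and applying the FKG inequality. For $0<k\le 2$ one notes that $B_\infty(l;k)\subseteq B_\infty(l;2)$ have the same height $l$, so any continuous vacant curve realising $\overline{LR}(l;2)$ contains, by a standard sub-arc extraction using the intermediate value theorem applied to its first coordinate, a continuous vacant arc inside $B_\infty(l;k)$ joining $L^-(l;k)$ to $L^+(l;k)$; hence $\overline{LR}(l;2)\subseteq\overline{LR}(l;k)$ and the hypothesised bound transfers verbatim. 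The same argument together with rotational invariance also delivers the lower bound for a vertical vacant crossing of the square $B_\infty(l;1)$, since a rotation by $\pi/2$ turns it into a horizontal crossing of a box of aspect ratio $1\le 2$.

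For $k>2$, translate the target box to $[0,lk]\times[0,l]$ and set $N:=\lceil k\rceil$. Cover it by the horizontal sub-boxes $H_j:=[(j-1)l,(j+1)l]\times[0,l]$ for $1\le j\le N$, each a translate of $B_\infty(l;2)$, whose consecutive overlaps $O_j:=H_j\cap H_{j+1}=[jl,(j+1)l]\times[0,l]$ for $1\le j\le N-1$ are translates of the square $B_\infty(l;1)$. On the event
\[
E:=\bigcap_{j=1}^{N}\{H_j\text{ has a horizontal vacant crossing}\}\cap\bigcap_{j=1}^{N-1}\{O_j\text{ has a vertical vacant crossing}\}
\]
the target box has a horizontal vacant crossing. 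Indeed a horizontal crossing $\gamma_j$ of $H_j$ contains a horizontal crossing of each adjacent overlap square, and a vertical crossing $\sigma_j$ of $O_j$ necessarily meets any horizontal crossing of $O_j$ (a horizontal crossing and a vertical crossing of a rectangle always intersect), so $\gamma_1\cup\sigma_1\cup\gamma_2\cup\cdots\cup\sigma_{N-1}\cup\gamma_N$ is a connected vacant subset of $[0,(N+1)l]\times[0,l]$ joining $\{x=0\}$ to $\{x=(N+1)l\}$, and, since $(N+1)l\ge lk$, truncating its first coordinate to $[0,lk]$ exactly as in the thin-box step produces the crossing. Each event appearing in $E$ is decreasing in the ellipse configuration, since adding ellipses can only shrink $\cV$; hence by the FKG inequality for Poisson point processes $P_{u,\rho}[\overline{LR}(l;k)]\ge P_{u,\rho}[E]\ge \delta^{N}\delta^{N-1}=\delta^{2\lceil k\rceil-1}$, using the bounds already established for the $H_j$ and the $O_j$. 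Renaming $\delta^{2\lceil k\rceil-1}$ as $\delta'(\uc{const:R_decay},u,k)>0$ completes the reduction.

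The argument carries no analytic difficulty; the one point requiring care is the geometric bookkeeping in the wide-box step, namely choosing the auxiliary boxes so that (i) each is a rigid-motion image of a box of aspect ratio at most $2$, so that the $k=2$ hypothesis, the thin-box observation and rotational invariance all apply to it, (ii) the glued vacant curve, after truncation, never leaves the target box, and (iii) the number $N$ of pieces depends on $k$ only, which is what makes the final constant $\delta'$ legitimate and independent of $l$. This is exactly why the authors describe the lemma as a standard application of FKG.
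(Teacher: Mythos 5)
Your proposal is correct and is precisely the ``standard application of FKG inequality'' that the authors invoke when omitting this proof: monotone sub-arc extraction for $k\le 2$, and for $k>2$ a gluing of $\lceil k\rceil$ overlapping translates of $B_\infty(l;2)$ with vertical crossings of the overlap squares, all decreasing events, yielding $\delta'=\delta^{2\lceil k\rceil-1}$ independent of $l$. No gaps; this matches the intended argument.
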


Because of Lemma \ref{lema:vacant_crossing_of_boxes_alpha_2_k_2} we know it is enough to prove the lower bound when $k = 2$, which we assume from now on. Notice that we can also assume $l$ is large. Indeed, we have the trivial bound
\begin{equation}
P[\, \overline{LR}(l;2) \,] \geq P[ B(2l) \subset \cV ] = P[B(2l) \cap \cE = \varnothing], \ \forall l > 0.
\label{eq:low_bound_barLR_l2_trivial}
\end{equation} 

The probability on the right hand side of equation \eqref{eq:low_bound_barLR_l2_trivial} is decreasing on $l$, so for $l \le 1$ we have by Proposition \ref{prop:new_ellipse_touches_ball}.\textit{(ii)} with $a = 2$ that $
P[\, \overline{LR}(l;2) \,] \geq P[ B(2) \cap \cE = \varnothing ] \geq \exp[ -u\uc{const:new_ellipse_touches_ball} 2^2 ]$.

Then, assume $k = 2$ and $l > 1$. The next step is to start the decomposition of $\xi$ into more treatable PPPs. For that, we introduce some new notation. We will partition the set $S$ into many parts, according to the ellipses positioning and size and also the sets they intersect. We recall that restricting a PPP to disjoint subsets gives birth to independent PPPs. 

\begin{defi}
\label{defi:subsets_for_partitioning_S}
Let $B, C \subset \RR^2$ and $D\subset \RR$. We define:
\begin{equation*}
\Gamma[B||C,D] := \{s \in S; \ \text{$E(s) \cap B \neq \varnothing$, $c(s) \in C$ and $R(s) \in D$}\}.
\end{equation*}
\end{defi}

\begin{defi}
\label{defi:restricted_PPP_xi}
We define $\xi_{B||C,D}$ as the restriction of PPP $\xi$ to the set $\Gamma[B||C,D]$. Also, we define the shorter versions $\xi_{B} := \xi_{B||\RR^2,\RR}$ and $\xi_{||C,D} := \xi_{\RR^2||C,D}$.
\end{defi}

\begin{remark}
Notice that the notation on Definition \ref{defi:restricted_PPP_xi} is consistent with Section \ref{section:decay_of_correlations_with_distance}.
\end{remark}

Our final objective is to use the decomposition of $\xi$ to build an event $H=H(l)$ with probability bounded away from zero for all $l$, and such that $H \subset \overline{LR}(l;2)$. To begin our partitioning, we restate Proposition \ref{prop:new_ellipse_touches_ball}.\textit{(i)} with the notation above.

\begin{lema}
\label{lema:restate_lemma_ellipse_touches_ball_from_far}
Let $w \in \RR^2$, $a \geq 1$ and $C \geq 2$. Then, we have
\begin{equation}
\exp[- u\uc{const:lemma_ellipse_touches_ball_from_far}^{-1} a^{2-\alpha}] \le P[\xi_{B(w,a)||B(w,Ca)^c, \RR}(S) = 0] \le \exp[- u\uc{const:lemma_ellipse_touches_ball_from_far} a^{2-\alpha}]
\end{equation}
for some constant $\uc{const:lemma_ellipse_touches_ball_from_far} = \uc{const:lemma_ellipse_touches_ball_from_far}(\uc{const:R_decay}, \alpha, C) > 0$.
\end{lema}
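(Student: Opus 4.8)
The plan is to observe that Lemma \ref{lema:restate_lemma_ellipse_touches_ball_from_far} is nothing more than Proposition \ref{prop:new_ellipse_touches_ball}.\textit{(i)} transcribed into the partition notation of Definitions \ref{defi:subsets_for_partitioning_S} and \ref{defi:restricted_PPP_xi}; the reason for restating it here is purely for notational convenience, so that the forthcoming arguments in this section can be phrased directly in terms of the restricted processes $\xi_{B||C,D}$.

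First I would unwind the definitions. By Definition \ref{defi:restricted_PPP_xi}, $\xi_{B(w,a)||B(w,Ca)^c,\RR}$ is the restriction of $\xi$ to the Borel set
\[
\Gamma[B(w,a)||B(w,Ca)^c,\RR] = \{s=(z,R,V)\in S;\ E(s)\cap B(w,a)\neq\varnothing \text{ and } z\in B(w,Ca)^c\},
\]
so that $\{\xi_{B(w,a)||B(w,Ca)^c,\RR}(S)=0\}$ is exactly the event that $\xi$ places no point in $\Gamma[B(w,a)||B(w,Ca)^c,\RR]$. Since $\xi$ is a PPP on $S$ with intensity $(u\lambda)\otimes\rho\otimes\nu$, the count $\xi(\Gamma[B(w,a)||B(w,Ca)^c,\RR])$ is Poisson distributed, and its mean is computed from the product form of the intensity as
\[
u\int_{B(w,Ca)^c}(\rho\otimes\nu)\bigl(\{(R,V):E(z,R,V)\cap B(w,a)\neq\varnothing\}\bigr)\, \text{d}z = u\int_{B(w,Ca)^c}P[E_z\cap B(w,a)\neq\varnothing]\, \text{d}z = u\int_{\RR^2}g(z)\, \text{d}z,
\]
where $g=g_{a,w,C}$ is precisely the thinning function appearing in Proposition \ref{prop:new_ellipse_touches_ball}.\textit{(i)}. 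Hence $P[\xi_{B(w,a)||B(w,Ca)^c,\RR}(S)=0] = \exp[-u\int_{\RR^2}g(z)\, \text{d}z] = P[\omega_g(\RR^2)=0]$ in the notation of Proposition \ref{prop:thinning}.

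Then it remains only to invoke the two-sided bound of Proposition \ref{prop:new_ellipse_touches_ball}.\textit{(i)}, which holds under exactly the hypotheses $a\geq 1$ and $C\geq 2$ assumed here (these force $Ca\geq a+1$, so that $g(z)\asymp|z-w|^{-(\alpha+1)}$ for $|z-w|>Ca$ by Lemma \ref{lema:prob_ellipse_intersects_ball_step1}), and to read off the claim with the same constant $\uc{const:lemma_ellipse_touches_ball_from_far}=\uc{const:lemma_ellipse_touches_ball_from_far}(\uc{const:R_decay},\alpha,C)$. I do not expect any genuine obstacle: the only point that needs to be checked is the identification of the $(\,u\lambda\otimes\rho\otimes\nu\,)$-measure of the partition piece $\Gamma[B(w,a)||B(w,Ca)^c,\RR]$ with $u\int_{\RR^2}g(z)\, \text{d}z$, which is immediate from Fubini's theorem and the definition of $E_z$.
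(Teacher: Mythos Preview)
Your proposal is correct and matches the paper's own treatment exactly: the paper states this lemma explicitly as a restatement of Proposition \ref{prop:new_ellipse_touches_ball}.\textit{(i)} in the new partition notation, without giving a separate proof. Your unwinding of Definitions \ref{defi:subsets_for_partitioning_S} and \ref{defi:restricted_PPP_xi} and the identification $P[\xi_{B(w,a)||B(w,Ca)^c,\RR}(S)=0]=P[\omega_g(\RR^2)=0]$ via Fubini is precisely the verification the paper leaves implicit, and the check that $a\geq 1$, $C\geq 2$ force $Ca\geq a+1$ is also correct.
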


Notice that for any value of $l$ we have $B_\infty(l;2) \subset B(2l) \subset B(4l)$. Taking $\alpha = 2$, $w$ as the origin, $C = 2$ and $a = 2l$ in Lemma \ref{lema:restate_lemma_ellipse_touches_ball_from_far}, we get
\begin{equation}
P[\xi_{B(2l)||B(4l)^c, \RR}(S) = 0] \geq \exp[- u\uc{const:lemma_ellipse_touches_ball_from_far}^{-1}]
\end{equation}
for some constant $\uc{const:lemma_ellipse_touches_ball_from_far} = \uc{const:lemma_ellipse_touches_ball_from_far}(\uc{const:R_decay}) > 0$. Notice that on this event we do not have to worry about ellipses too far away (centered on $B(4l)^c$) interfering with the vacant left-right crossing of $B_\infty(l;2)$, since none of them intersect the ball $B(2l)$.

So we can restrict ourselves to the ellipses centered on $B(4l)$. Now, being confined to a finite region, we can pay a price to avoid ellipses with too large major axis. More precisely, notice that
\begin{align*}
P[\xi_{||B(4l), [l/2,\infty)}(S) = 0] 
   &=    \exp[ - u \, \lambda(B(4l)) \, P[R \geq l/2]] \\
   &\geq \exp[ - u \cdot 16\pi l^2 \cdot \uc{const:R_decay} 2^2l^{-2}] \geq \exp[ -u \, c(\uc{const:R_decay}) ].
\end{align*}

Define $H_1 := \{\xi_{B(2l)||B(4l)^c, \RR}(S) = 0, \ \xi_{||B(4l), [l/2,\infty)}(S) = 0\}$. By independence, we have that
\begin{equation}
P[H_1] \geq \exp[ - u \, c(\uc{const:R_decay})]
\label{eq:making_subevent_barLR_l2}
\end{equation}
where the constants $c$ and $\uc{const:lemma_ellipse_touches_ball_from_far}$ have been combined into a new one. On $H_1$, the only ellipses that could prevent $\overline{LR}(l;2)$ from happening must be centered on $B(4l)$ with major axis size in $[1,l/2)$.

%

%
%
%
%
%
%
%
%

\vspace{2mm}
\noindent \textbf{Introducing Fractal Percolation}

\vspace{2mm}
We want to compare the model we are currently studying with fractal percolation, as defined in \cite{ChayesDurrett1988fractal_percolation}. Let us give some definitions and results from \cite{ChayesDurrett1988fractal_percolation}. Consider a square box $B = [0,l]^2$ (in the original paper the boxes had side length 1, but this makes no difference in what follows).

Given a parameter $p \in [0,1]$ and $N \in \NN$, $N \geq 2$, we do the following inductive procedure: we divide all boxes into $N^2$ equal boxes and let any of them remain in the process with probability $p$, independently. Define the remaining set after the $n$-th step by $A_n$. More formally, we can define the boxes
\begin{equation*}
B^n_{ij} := \left[ \frac{(i-1)l}{N^n}, \frac{il}{N^n}\right] \times \left[ \frac{(j-1)l}{N^n}, \frac{jl}{N^n}\right] \ \text{for $0 \le i,j \le N^n$}
\end{equation*}
and take iid. random variables $(\epsilon^n_{ij})^{n \in \NN}_{0 \le i,j \le N^n}$ with $\epsilon^n_{ij} \distr \text{Ber}(p)$. Setting $A_0 = B$, we can define 
\begin{equation*}
A_n = A_{n-1} \cap \Big( \bigcup_{i,j; \, \epsilon^n_{ij} = 1} B^n_{ij} \Big).
\end{equation*}

In an analogous way, we can study fractal percolation in sets different from $B$. In order to do that, we can consider unions of disjoint boxes of side $l$ and then make the same procedure in each one of them.

In \cite{ChayesDurrett1988fractal_percolation}, Theorem 1, the authors prove that for $p$ sufficiently close to 1 the set $A_\infty = \cap A_n$ connects the opposing sides of box $B$ with high probability. Let us define a similar process in the model we are working with.

\vspace{2mm}
\noindent\textbf{The Removal Process}

\medskip
We can relate our model with fractal percolation through the following procedure. We fix $N = 2$. Initially, we divide the interval $[1,l/2)$, into $n_0 = n_0(l)$ disjoint intervals
\begin{equation}
I_n = \left\{
 \begin{array}{ll}
   \left[\tfrac{l}{2^{n+1}}, \tfrac{l}{2^n} \right) & \text{if $1 \le n < n_0$} \\[1ex]
   \left[1, \tfrac{l}{2^n} \right)                  & \text{if $n = n_0$}
 \end{array}
\right.
\end{equation}
where $n_0$ is the only integer such that $l/2^{n_0 + 1} \le 1 < l/2^{n_0}$, or equivalently $n_0 = \lceil \log_2 l \rceil - 1$. The box $B_\infty(l;2)$ is composed of 2 square boxes of side $l$. For each $n$ we can partition each of them into $4^n$ boxes of side $l/2^n$. We denote by $(B^n_z)_{z \in \Lambda_n}$ the collection of boxes of this partition. Since we want to find a vacant path connecting $L^-(l;2)$ and $L^+(l;2)$, we will successively test, for $n$ ranging from 1 to $n_0$, which are the sub boxes that were not intersected by ellipses with major axis in $I_n$. We define
\begin{equation}
X^n_{z} := \indAlto{\xi_{B^n_{z}||\RR^2, I_n}(S) = 0}.
\end{equation}

For every $n$, the family $(X^n_z)_{z \in \Lambda_n}$ is a random field. Notice that the collection of random fields $(X^n_\cdot)_{n = 1}^{n_0}$ is independent. However, for a fixed $n$ the values of $X^n_z$ on this random field are not independent; if $X^n_z = 0$, we have that $B^n_z$ has been intersected by an ellipse and then it is more probable that one of its neighboring boxes has also been intersected. Now, the choice of intervals $I_n$ becomes clearer. Since the random field $X^n_z$ is only concerned with ellipses whose major axis is in $I_n$, we have that one single ellipse cannot intersect two boxes at distance greater than $2l/2^n$. This means the random field $X^n_z$ is $2$-dependent. Moreover, if we denote by $\tilde{B}^n_z$ the region $B^n_z$ enlarged by $l/2^n$, we have
\begin{equation*}
P[X^n_z = 1] \geq P[\xi(\tilde{B}^n_z \times I_n \times (-\tfrac{\pi}{2}, \tfrac{\pi}{2}]) = 0] \geq \exp[- uc(\uc{const:R_decay})] =: p(u).
\end{equation*}

By this, we can apply the results of Liggett, Schonmann and Stacey \cite{liggett1997domination}. We conclude that if $p(u)$ is sufficiently close to one there is a $\beta = \beta(p(u))$ such that for each $n$ we can find an independent random field $(Y^n_z)_{z\in \Lambda_n}$ that is dominated by $(X^n_z)_{z \in \Lambda_n}$ and has a product law with $P[Y^n_z = 1] = \beta$. Moreover, we can take $\beta \to 1$ when $p \to 1$. This domination is enough to complete our proof, since if we take $\bar{u} = \bar{u}(\uc{const:R_decay})$ sufficiently small then $\beta(u)$ will be close to one for $u \in (0, \bar{u})$. Hence, the $n_0$-th step of fractal percolation $A_{n_0}$ obtained through
\begin{equation}
\label{eq:def_n0_fractal_step}
A_0 = B_\infty(l;2) \ \text{and} \ A_n = A_{n-1} \cap \Big(\bigcup_{\substack{\scriptscriptstyle z \in \Lambda_n;\\ \scriptscriptstyle Y^n_z = 1}} B^n_z \Big)
\end{equation}

\vspace{-3mm}
\noindent will contain a crossing of $B_\infty(l;2)$ with probability close to one, by Theorem 1 of \cite{ChayesDurrett1988fractal_percolation}. Define the random subset $\tilde{A}_{n_0}$ in the same way as $A_{n_0}$, substituting $Y^n_z$ by $X^n_z$. Stochastic domination implies we also have a left-right crossing of $B_\infty(l;2)$ in the set $\tilde{A}_{n_0}$. Translating to ellipses intersection, this means we found a random path on $B_\infty(l;2)$ that is not intersected by any ellipse with major axis in $[1, l/2)$. Denoting this event by $H_2$ and noticing that $H_2$ is independent from $H_1$, we conclude
\begin{equation*}
P[\overline{LR}(l;2)] \geq P[H_1 \cap H_2] \geq P[H_1]P[H_2] \geq \delta(u) > 0, \ \forall u \in (0, \bar{u}(\uc{const:R_decay})),
\end{equation*}
finishing the proof of the first claim on Theorem \ref{teo:vacant_crossing_of_boxes_alpha_2}.

\medskip
\noindent\textbf{Covered set does not percolate for small intensities}
\medskip

Using \eqref{eq:vacant_crossing_bounded_away}, let us prove that for $u < \bar{u}(\uc{const:R_decay})$ neither $\cV$ nor $\cE$ percolate, almost surely. The proof for $\cV$ is already done in Section \ref{subsection:triviality_of_critical_points_when_alpha_in_1_2}; we already know that $\cV$ does not percolate for any $u > 0$. 

To prove $\cE$ does not percolate, we can combine equation \eqref{eq:vacant_crossing_bounded_away} and FKG inequality. It follows that with probability at least $\delta(u, \uc{const:R_decay}) > 0$ we have a vacant circuit around the origin on $B(3l)\backslash B(l)$. So, we can pick an increasing sequence $(L_n) \subset \RR^+$, define the event $A_n$ in which there is such a circuit on $B(3L_n)\backslash B(L_n)$ and try to apply Borel-Cantelli's lemma, since $\sum P(A_n) = \infty$.

However, the events $A_n$ are not independent. The idea is then to choose $L_n$ increasing sufficiently fast so that the events $A_n$ get almost independent quickly. We apply Lemma \ref{lema:decay_of_correlations_for_annulus} for the functions $\inds{A_n}$ when $\alpha = 2$. Let us denote $a_n := \tfrac{L_n}{L_{n-1}}$. From Lemma \ref{lema:decay_of_correlations_for_annulus} we can deduce that if $j > i$ and $\tfrac{L_j}{3L_i} \geq 3$ then
\begin{equation*}
|P[A_i \cap A_j] - P[A_i]P[A_j]| \le \frac{u\uc{const:decay_of_correlations_annulus}}{\tfrac{L_j}{3L_i} - 1} = \frac{3u\uc{const:decay_of_correlations_annulus}}{a_{i+1} \ldots a_j - 3} \le \frac{3u\uc{const:decay_of_correlations_annulus}}{a_{i+1} - 3}.
\end{equation*}

Then, we can use a generalization of Borel-Cantelli due to Ortega and Wschebor \cite{ortega1984sequence}. The result states that a sufficient condition for $P[A_n, \ \text{i.o.}] = 1$ is that $\sum_n P(A_n) = \infty$ and 
\begin{equation}
\label{eq:ortega_condition_for_borel_cantelli}
\liminf_n \frac{\sum_{1 \le i < j \le n} [P(A_i \cap A_j) - P(A_i)P(A_j)]}{\left[ \sum_{i=1}^n P(A_i) \right]^2} \le 0.
\end{equation}

It is straightforward to check that if we take $L_1 \geq 1$ and the sequence $a_n$ satisfying $a_n \geq 9$ for all $n$ and $\sum_n \tfrac{1}{a_n} < \infty$ then condition \eqref{eq:ortega_condition_for_borel_cantelli} is satisfied. We omit the details.
\end{proof}

\bibliographystyle{plain}
\bibliography{Ellipses_Percolation_paper}

\end{document}